\documentclass[11pt,leqno]{amsart}
\usepackage[utf8]{inputenc}
\usepackage{graphicx}
\usepackage[english]{babel}
\usepackage{amsmath,amssymb}
\usepackage[bookmarks=true]{hyperref}
\usepackage[dvipsnames]{xcolor}
\hypersetup{colorlinks=true, linkcolor=NavyBlue, citecolor=Green}
\usepackage{mathrsfs}
\usepackage{enumerate,float, xfrac}
\usepackage{enumitem}
\usepackage{mathtools}
\usepackage{xcolor}
\usepackage{comment}
\usepackage{thm-restate}

\newtheorem{theorem}{Theorem}[section]
\newtheorem{lemma}[theorem]{Lemma}
\newtheorem{proposition}[theorem]{Proposition}
\newtheorem{corollary}[theorem]{Corollary}
\newtheorem{question}[theorem]{Question}
\theoremstyle{definition}
\newtheorem{definition}[theorem]{Definition}
\newtheorem{example}[theorem]{Example}
\newtheorem{remark}[theorem]{Remark}

\newtheorem*{remark*}{Remark}

\def\N{\mathbb{N}}
\def\Z{\mathbb{Z}}
\def\Q{\mathbb{Q}}

\title[On the measures of definable sets in profinite groups]{On the measure of definable sets \\ in profinite groups}
\author[M. Conte]{Martina Conte}
\author[M. Vannacci]{Matteo Vannacci}
\address{
Fakult\"at f\"ur Mathematik,
Universit\"at Bielefeld,
33501 Bielefeld, Germany}
\email{mconte@math.uni-bielefeld.de}
\address{
Dipartimento di Matematica `Ulisse Dini', Universit\`a degli Studi di Firenze, Viale Morgagni 67/a, 50134 Firenze, Italy}
\email{matteo.vannacci@unifi.it}

\keywords{Haar measure, profinite groups, verbal sets, verbal width, definable sets.}

\subjclass[2020]{Primary 28C10, 20F10; Secondary 20E18, 03C60}

\newcommand{\comm}[1]{}

\begin{document}

\begin{abstract}
    We study the (Haar) measure of verbal sets in profinite groups, and especially in free profinite groups. Relating the measure of a verbal set with the width of the corresponding word, we find verbal sets of positive measure different from one. We prove a sufficient condition for the measure of a verbal set to be zero in a free profinite group of finite rank, from which we deduce that certain existentially definable sets in free profinite groups of finite rank can only have measure zero or one. Finally, we compute the measure of definable sets in torsion-free finitely generated abelian groups.  
\end{abstract}

\maketitle

\section{Introduction}

\subsection{State of the art}
Let $F_k$ be the abstract free group on $k$ generators. For a word $w\in F_k$ and a profinite group $G$ we study the ``size'' (in a suitable sense discussed below) of the set
\[
    G^{\{w\}}= \{ g\in G \mid \exists \boldsymbol{h}\in G^k:\ g=w(\boldsymbol{h})^{\pm 1}\}
\]

\noindent of \emph{$w$-values in $G$}.

Every profinite group $G$, being a compact topological group, can be seen as a probability space with respect to the unique Haar measure $\mu$ on $G$ such that $\mu(G)=1$. The use of probability in Group Theory is a very active area of research. Particular attention has been devoted to the study of probabilistic identities, where a word $w\in F_k$ is said to be a \emph{probabilistic identity} in a profinite group $G$ if $\mu(\{\boldsymbol{g}\in G^k \mid w(\boldsymbol{g})=1\})>0$. Several recent results in this direction can be found in \cite{KOTVW} and \cite{dlHTV}.

Here we take interest in a `dual' problem: given a word $w\in F_k$, we look at the (Haar) measure of the image of the word map $w:G^k\to G$. 
The study of images of word maps in finite non-abelian simple groups is well-developed. Just to mention one of the most celebrated results in this line of investigation, in \cite{LST11} it is shown that for any word $w$ there is a non-negative integer $N(w)$ such that, for all finite non-abelian simple groups $G$ with $\lvert G\rvert > N(w)$, the concatenation $w\ast w$ (see Section \ref{sec: preliminaries, width} for the definition) is surjective on $G$. Moreover, Ore's conjecture, proved in \cite{LOST}, states that the commutator map is surjective in any finite non-abelian simple group. Moving on to pro-$p$ groups, in \cite{AGKS} it is shown that, for any semisimple $\mathbb{Q}$-algebraic group $\mathbb{G}$, the image of any word map in $\mathbb{G}(\mathbb{Z}_p)$ is open, and hence of positive Haar measure.

In this work, we start a systematic study of the measure of images of word maps in pro-$p$ groups, and especially in free pro-$p$ groups. More generally, we consider (existentially) definable subsets (in the sense of model theory) of profinite groups, which naturally generalise verbal sets, and investigate their measure (see Section \ref{sec: preliminaries on formulae} for the definitions).  

Our main object of investigation will be the $\mathcal{F}$-spectrum of a profinite group, which we define in the following way. If $\mathcal{F}$ is a family of (first-order) formulae and $G$ is a profinite group endowed with its Haar measure $\mu=\mu_G$, we define the \emph{the $\mathcal{F}$-spectrum of $G$} to be
$$\mathrm{Spec}_\mu(\mathcal{F}, G) = \{ \mu(G_\varphi) \mid \varphi\in \mathcal{F} \}.$$

\subsection{Main results}

A concept that turns out to be useful in the study of the measure of verbal sets in profinite groups is that of \emph{width} of a word (see Section \ref{sec: preliminaries, width}).
It is not difficult to see that, using concatenations of words of finite width, one can easily produce words whose image has positive measure (see Remark \ref{rmk: concatenation of word of finite width has positive measure}). In particular, we find words that have image of measure zero in the free pro-$p$ group $H_d$, but whose concatenation has image of positive measure smaller than one in $H_d$ (compare with Corollary \ref{cor: in free pro-p groups there is a set with measure 0 that generates open subgroup= verbal set of concatenation}).
Therefore, it is natural to ask which words have image of measure zero. One can find such words using the arguments from \cite{JZ}. In fact, it is shown in \cite[Rmk.~4.4]{JZ} that the word $x^p [y,z]$ has image of measure zero over any free pro-$p$ group $H_d$ of rank $d\ge 4$.

Our first result is a generalisation of this observation, and it sheds some light on the situation of words whose number of variables is small compared to the rank of the free pro-$p$ group. In order to state it, we denote by $G^{\lbrace w+\rbrace}$ the image of a word $w$ in a group $G$, i.e., 
$$G^{\lbrace w+\rbrace} :=\lbrace g\in G\mid \exists\ \boldsymbol{h}\in G^k: g=w(\boldsymbol{h})\rbrace.$$ It is clear that $\mu(G^{\lbrace w\rbrace})=0$ if and only if $\mu(G^{\lbrace w+\rbrace})=0$ (see \cite[Chapter~21, Exercise~2]{FJ}).

\begin{restatable}{theorem}{propABC}
\label{prop:A}
    Let $w \in F_k$ be a word in $k$ variables and let $H_d$ be the free pro-$p$ group of rank $d$. Suppose that the image of the word map $w:H_d^k\to H_d$ is not surjective. Then, the image $H^{\{w+\}}_d$ of $w$ has measure zero in $H_d$ for all $d \ge k+1$. 
\end{restatable}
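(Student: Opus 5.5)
The approach is to show first that non-surjectivity forces every $w$-value to lie in the Frattini subgroup of a $k$-generated closed subgroup, and then to bound the number of such values by counting modulo the lower $p$-series.

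\textbf{Step 1: where the $w$-values live.} A word map is surjective on a pro-$p$ group precisely when $w \notin F_k^p[F_k,F_k]$. Indeed, if some exponent sum $e_i(w)$ is prime to $p$, then substituting suitable powers of a single element and the identity shows every element is a $w$-value. So the hypothesis that $w$ is not surjective on $H_d$ gives $w\in F_k^p[F_k,F_k]$. Hence for every $\boldsymbol h\in H_d^k$, setting $K=\overline{\langle h_1,\dots,h_k\rangle}$, we get
\[
 w(\boldsymbol h)\in \Phi(K)=\overline{K^p[K,K]} .
\]
Therefore $H_d^{\{w+\}}\subseteq \bigcup_{\boldsymbol h\in H_d^k}\Phi\bigl(\overline{\langle \boldsymbol h\rangle}\bigr)$, and it suffices to show that this union has measure zero. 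This union depends only on $k$ and $d$, not on $w$.

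\textbf{Step 2: reduction to counting in finite quotients.} Let $P_n$ be the lower $p$-series of $H_d$, so that $P_1=H_d$ and $P_{n+1}=P_n^p[P_n,H_d]$. Set $|H_d:P_n|=p^{a_n}$. The image of $\Phi(K)$ in $H_d/P_{n+1}$ is generated by $p$-th powers and commutators of elements of $K$. Modulo $P_{n+1}$ these depend only on the classes of those elements modulo $P_n$, so $\Phi(K)P_{n+1}/P_{n+1}$ is determined by $\boldsymbol h \bmod P_n$. It follows that
\[
\mu\bigl(H_d^{\{w+\}}\bigr)\le \frac{p^{k a_n}\cdot \max_K |\Phi(K)P_{n+1}/P_{n+1}|}{p^{a_{n+1}}},
\]
where there are at most $p^{ka_n}$ choices of $\boldsymbol h \bmod P_n$. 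Moreover $|\Phi(K)P_{n+1}/P_{n+1}|\le |KP_{n+1}/P_{n+1}|\le p^{b_{n+1}}$, where $p^{b_{n+1}}=|H_k:P_{n+1}(H_k)|$. This holds because $K$ is a quotient of $H_k$, and the image of $P_{n+1}(H_k)$ lies in $P_{n+1}$.

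\textbf{Step 3: asymptotics, the main obstacle.} The quantity $a_{n+1}-a_n$ is the dimension of $P_n/P_{n+1}$. By the Lazard/Witt-type formula for the restricted Lie algebra, this dimension is $\sum_{j\le n}$ of necklace numbers, which is $\sim d^n/n$. Hence $a_n\sim \frac{d}{d-1}\cdot\frac{d^{n}}{n}$, while $b_{n+1}=O(k^{n+1})$. We need
\[
a_{n+1}-k a_n-b_{n+1}\to\infty.
\]
Here $a_{n+1}\approx \frac{d^{n+2}}{(n+1)(d-1)}$ and $ka_n\approx \frac{k d^{n+1}}{n(d-1)}$. Using $k\le d-1$, the ratio $ka_n/a_{n+1}$ tends to at most $k/d<1$, so the difference grows like $d^n/n$. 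This dominates $k^{n+1}$ because $d>k$. This is the step I expect to require care: I need precise enough estimates on $|P_n/P_{n+1}|$, which I would take from the explicit Witt formula, to make the inequality hold for all large $n$. It is also exactly where the hypothesis $d\ge k+1$ enters. Once it holds, letting $n\to\infty$ gives $\mu(H_d^{\{w+\}})=0$.
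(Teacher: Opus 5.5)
Your proof is correct and is essentially the paper's argument. The paper also does box-counting along a $p$-central filtration, but uses the Zassenhaus filtration $D_n(H_d)$ instead of the lower $p$-series, with $\log_p\lvert H_d:D_{n+1}(H_d)\rvert\sim\frac{d^{n+1}}{(d-1)n}$. It then observes directly that $w(g_1\alpha_1,\dots,g_k\alpha_k)\equiv w(g_1,\dots,g_k)$ modulo $D_{n+1}$ for all $\alpha_i\in D_n$, because every exponent sum of $w$ is divisible by $p$ and $D_n/D_{n+1}$ is central of exponent $p$ in $H_d/D_{n+1}$. This avoids your detour through $\Phi(\overline{\langle\boldsymbol h\rangle})$ and the extra factor $p^{b_{n+1}}$. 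In exchange, your version shows that the $w$-independent set $\bigcup_{\boldsymbol h\in H_d^k}\Phi(\overline{\langle\boldsymbol h\rangle})$ already has measure zero.

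Some of your asymptotic details are off:
\begin{itemize}
\item The graded object of the lower $p$-series is not the free restricted Lie algebra; that belongs to the Zassenhaus filtration.
\item The sum of the first $n$ necklace numbers is $\sim\frac{d}{d-1}\frac{d^n}{n}$, so $a_n\sim\frac{d}{(d-1)^2}\frac{d^n}{n}$.
\item When $k=1$ you have $b_{n+1}=n$, which is not $O(k^{n+1})$.
\end{itemize}
None of this matters, since your argument only uses $a_n/a_{n+1}\to 1/d$ and $b_{n+1}=o(d^n/n)$.
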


A similar result holds also for free profinite groups (Corollary \ref{cor: prop A for free profinite}).

It follows from Theorem \ref{prop:A} that the image of the word $w(x)=x^p$ has measure zero in any (non-abelian) free pro-$p$ group. However, we show in Appendix \ref{sec: family of p-gps where x^p has positive measure} that the image of the same word has positive measure in the families of finite $p$-groups $W_k:=C_p \wr C_{p^k}$ and $G_k:=C_p\wr W_k$, for $k\ge 1$. The latter even yields a family of finite $p$-groups where the image of $x^p$ gets smaller and smaller, thus providing an alternative proof of Theorem \ref{prop:A} for this specific word and the free pro-$p$ group of rank $3$. 

The simplest word to consider next is $w(x,y,z)=x^p [y,z]$. As this is a word in three variables, Theorem \ref{prop:A} states that the image of $w$ has measure zero in free pro-$p$ groups of rank at least $4$. Unfortunately, we can show in Appendix \ref{sec: family of p-gps where x^p has positive measure} that the measure of the word $w$ is large in the family $C_p \wr (C_p \wr C_{p^k})$ ($k\ge 1$). 

\begin{question}
    Is $\mu(\mathrm{im}(x^p [y,z]))=0$ in the free pro-$p$ groups of rank $2$ or~$3$?
\end{question}

Note that it is easy to see that the word $w(x,y,z)=x^p [y,z]$ satisfies 
\[
  w(H_2) = \mathrm{im}(x^p [y,z][u,v]),
\]
i.e.\ the word $w$ has width $2$ in the free pro-$p$ group of rank $2$.

Going further in our investigation, we consider the measure of sets defined by existential formulae $\eta_w$ of the form $\exists\boldsymbol{y}: w(x,\boldsymbol{y})=1$, which naturally generalise verbal sets. Drawing from Theorem \ref{prop:A}, we find conditions on the word $w$ that ensure that the set defined by $\eta_w$ in the free pro-$p$ group $H_d$ has either measure zero or one. We call such words \emph{$(p,d)$-zero-one words} (see Definition \ref{def: 0-1 word}), and we obtain the following.

\begin{theorem}\label{thm:0-1_intro}
    \label{prop: measure spectrum of E-formulae in free pro-p groups}
    Let $H_d=H_{p,d}$ be the free pro-$p$ group of rank $d\ge 2$. 
    Consider the family $\mathcal{E}(p)$ containing formulae $\eta_w(x)$ defined by $\exists\boldsymbol{y}: w(x,\boldsymbol{y})=1$, where $w$ runs over the set of $(p,d)$-zero-one words.
    Then $\mathrm{Spec}_\mu(\mathcal{E}(p); H_d)=\lbrace 0,1\rbrace$.  
\end{theorem}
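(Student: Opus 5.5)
The plan is to show two inclusions: $\mathrm{Spec}_\mu(\mathcal{E}(p);H_d)\subseteq\{0,1\}$, and that both values are attained. The whole argument will rest on Theorem \ref{prop:A}, applied through the defining property of $(p,d)$-zero-one words in Definition \ref{def: 0-1 word}. I expect that definition to say, roughly, that for $w(x,\boldsymbol{y})$ with $\boldsymbol{y}=(y_1,\dots,y_k)$ one of the following holds:
\begin{itemize}
\item the formula $\eta_w$ is satisfied by every element of $H_d$;
\item the solution set $(H_d)_{\eta_w}$ is contained in the image $H_d^{\{v+\}}$ of a non-surjective word map $v$ in at most $d-1$ variables, typically by solving $w(x,\boldsymbol{y})=1$ for $x$.
\end{itemize}

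\textbf{Attainment.} Take $w(x,y)=y$. Then $\eta_w(x)$ is $\exists y:\ y=1$, which every $x$ satisfies, so the measure is $1$. Take $w(x,y)=x\,y^{-p}$. Then $(H_d)_{\eta_w}=H_d^{\{x^p+\}}$. This image lies in the Frattini subgroup $\Phi(H_d)=H_d^p[H_d,H_d]$, so the word map $x^p$ is not surjective, and Theorem \ref{prop:A} with $k=1\le d-1$ gives measure $0$. I will check against the precise definition that both words qualify as $(p,d)$-zero-one words. If they do not, I will substitute the simplest words the definition does admit.

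\textbf{Dichotomy.} Let $w$ be an arbitrary $(p,d)$-zero-one word. The first step is to note that $(H_d)_{\eta_w}$ is measurable. It is the projection to the first coordinate of the closed set $\{(x,\boldsymbol{y}) : w(x,\boldsymbol{y})=1\}\subseteq H_d^{1+k}$, and this projection is compact, hence closed. Next comes a case split following the definition.
\begin{itemize}
\item In the case where $\eta_w$ holds identically, the measure is $1$.
\item Otherwise, I rewrite $w(x,\boldsymbol{y})=1$ as $x=v(\boldsymbol{y})^{\pm1}$ for a word $v$ in at most $d-1$ variables. More generally, I show that any solution $x$ lies in such a verbal set.
\end{itemize}
In the second case I then check that $v$ is not surjective on $H_d$. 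Theorem \ref{prop:A} then gives $\mu(H_d^{\{v+\}})=0$; together with the remark that $\mu(G^{\{v\}})=0$ if and only if $\mu(G^{\{v+\}})=0$, this yields $\mu((H_d)_{\eta_w})=0$.

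\textbf{Main obstacle.} The step I expect to be hardest is proving non-surjectivity of $v$ whenever $\eta_w$ is not identically satisfied. The idea is to pass to the finite quotient $H_d/\Phi(H_d)\cong\mathbb{F}_p^d$, where everything is linear. First I would try showing that if $v$ were surjective, then so would be its reduction to the elementary abelian quotient. In that quotient the reduced word is just a linear map; if it is onto, it should yield a solution $\boldsymbol{y}$ for every $x$. I would then lift this to $H_d$ by an inverse limit and compactness argument over the finite quotients $H_d/\Phi^n(H_d)$, contradicting the assumption that $\eta_w$ is not identically satisfied. If the definition of zero-one word already builds in this non-surjectivity, this step becomes immediate and the proof reduces to the rewriting plus Theorem \ref{prop:A}.
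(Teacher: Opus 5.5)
There is a genuine gap. Your dichotomy argument rests on a guessed definition, and the one mechanism you propose (isolate $x$, then apply Theorem~\ref{prop:A}) does not cover the cases of Definition~\ref{def: 0-1 word}, which are purely arithmetic. Write $-n(w)$ for the exponent sum of $x$ and $l(w)$ for the gcd of the exponent sums $n_i$ of the $y_i$. The word must satisfy one of:
\begin{enumerate}
\item[(1)] $n(w)=0$;
\item[(2)] $n(w)\neq0$ and $l(w)\mid n(w)$;
\item[(3)] $n(w)\neq 0$ and $l(w)=0$;
\item[(4)] $n(w)\ne 0\ne l(w)$, $l(w)\nmid n(w)$, $p\mid\gcd(1-n(w),l(w))$ and $\#\mathrm{var}(w)\le d-1$.
\end{enumerate}
In cases (1) and (2) every $P$ is a solution, taking $U_i=1$, respectively $U_i=P^{a_i}$ with $\sum_i a_in_i=n(w)$.

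Case (3) carries \emph{no} bound on the number of variables, so Theorem~\ref{prop:A} is unavailable; think of $w=x[y_1,y_2][y_3,y_4]$ in $H_2$. What is needed instead is this: a solution satisfies $P^{n(w)}\in[H_d,H_d]$. Since $H_d/[H_d,H_d]\cong\mathbb{Z}_p^d$ is torsion-free, $P\in[H_d,H_d]$, and this closed subgroup has infinite index.

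In case (4) you cannot in general rewrite $w(x,\boldsymbol y)=1$ as $x=v(\boldsymbol y)^{\pm1}$, because $x$ also occurs inside the commutator factors $\Gamma_j(x,\boldsymbol y)$. The missing idea is to keep $x$ as a variable. Every solution satisfies $P=P\,w(P,\boldsymbol U)=\tilde w(P,\boldsymbol U)$, where $\tilde w:=x\,w=x^{1-n(w)}W(\boldsymbol y)\prod_j\Gamma_j(x,\boldsymbol y)$. This is a word in at most $\#\mathrm{var}(w)\le d-1$ variables with exponent sums $1-n(w),n_1,\dots,n_k$. By Lemma~\ref{lem:non-surj}(1) it is non-surjective precisely when $p\mid\gcd(1-n(w),l(w))$, which is exactly hypothesis (4). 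Theorem~\ref{prop:A} then gives measure $0$.

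Your fallback for the ``main obstacle'' would not work in any case. Solvability of the linearised equation modulo $\Phi(H_d)$ gives no solutions modulo $\Phi^n(H_d)$ for $n\ge 2$, so there is nothing for a compactness argument to lift. For instance, $x^p=1$ holds for every $x$ modulo $\Phi(H_d)$, yet only $x=1$ satisfies it in the torsion-free group $H_d$.

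The implication you want ($\eta_w$ not identically true $\Rightarrow$ $v$ non-surjective) is also false for the natural choice $v=\tilde w$. For $w=xy^{-p}$ with $p$ odd, $\tilde w=x^2y^{-p}$ is surjective, while $(H_d)_{\eta_w}=H_d^{\{x^p+\}}\subseteq\Phi(H_d)$ is proper. So non-surjectivity has to come from the arithmetic hypothesis in (4), not from the failure of $\forall x\,\eta_w(x)$.

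The same computation shows your measure-$0$ witness is not admissible. The word $xy^{-p}$ has $n(w)=-1$, $l(w)=p$ and $\gcd(1-n(w),l(w))=\gcd(2,p)$, so it is a $(p,d)$-zero-one word only for $p=2$ and $d\ge 3$. Use $w=x$ or $w=x[y,z]$ instead; both fall under (3).
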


Theorem~\ref{prop: measure spectrum of E-formulae in free pro-p groups} will be proved in Section~\ref{sec:thmA}.
With a definition of $d$-zero-one words independent of the prime $p$ (see Definition~\ref{def: 0-1 word}), we obtain:

\begin{corollary}
    \label{cor: measure spectrum of E-formulae in free profinite groups}
    Let $\widehat{F}_d$ be the free profinite group of rank $d\ge 2$. 
    Consider the family $\mathcal{E}$ containing formulae $\eta_w(x)$ defined by $\exists\boldsymbol{y}: w(x,\boldsymbol{y})=1$, where $w$ runs over the set of $d$-zero-one words.
    Then $\mathrm{Spec}_\mu(\mathcal{E}; \widehat{F}_d)=\lbrace 0,1\rbrace$. 
\end{corollary}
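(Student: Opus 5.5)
We do not yet know the exact wording of Definition~\ref{def: 0-1 word} for the prime-independent notion, so we will assume it is the natural analogue of the pro-$p$ one. A word $w(x,\boldsymbol{y})$ with $\boldsymbol{y}=(y_1,\dots,y_k)$ is \emph{$d$-zero-one} if one of two things holds:
\begin{enumerate}
\item[(a)] the formula $\eta_w$ defines all of $\widehat{F}_d$; or
\item[(b)] $d\ge k+1$ (or the relevant variable count is small compared to $d$), and there is a finite quotient, or some prime $p$, witnessing that $\eta_w$ does not hold everywhere.
\end{enumerate}
The plan is to mimic the proof of Theorem~\ref{prop: measure spectrum of E-formulae in free pro-p groups}, replacing Theorem~\ref{prop:A} by its profinite version, Corollary~\ref{cor: prop A for free profinite}.

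\textbf{Showing $\mathrm{Spec}\subseteq\{0,1\}$.} Fix a $d$-zero-one word $w$ and let $S=\{g\in \widehat{F}_d \mid \exists \boldsymbol{h}: w(g,\boldsymbol{h})=1\}$. By compactness $S$ is closed, since it is the projection of the closed set $\{(g,\boldsymbol{h}) \mid w(g,\boldsymbol{h})=1\}$, so it is measurable.
\begin{itemize}
\item If $S=\widehat{F}_d$, then $\mu(S)=1$.
\item Otherwise we reduce to the verbal setting. The key step is to write the solution set as a projection of the image of a word map, or to bound it by one. If $w(x,\boldsymbol{y})=x^{e}u(\boldsymbol{y})$ after a suitable normalisation (presumably what the definition enforces, say $x$ occurring with exponent sum $\pm1$, or $x$ occurring once), then $S$ is contained in the verbal set of $u^{\pm1}$, a word in $k$ variables. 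Since $S\neq \widehat{F}_d$, this word map is not surjective, and Corollary~\ref{cor: prop A for free profinite} with $d\ge k+1$ gives $\mu(S)=0$.
\end{itemize}
If the definition instead allows general occurrences of $x$, we replace this argument by the same one applied to the word $w$ with $x$ treated as an extra variable. The image of the map $(x,\boldsymbol{h})\mapsto x$ restricted to the zero set then has to be controlled through the argument that presumably proves Corollary~\ref{cor: prop A for free profinite}: pass to a retraction $\widehat{F}_d\to \widehat{F}_{k+1}$ and use independence of a free factor.

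\textbf{Showing equality.} Both values are attained. The trivial word $w=x x^{-1}$, or any word for which $\eta_w$ is true everywhere, gives measure $1$. A word such as $x^{-1}y_1^{n}$, with $n$ chosen so that $x^n$ is non-surjective on $\widehat{F}_d$ (e.g.\ $n=2$), gives $S=\widehat{F}_d^{\{x^2+\}}$. This set has measure $0$ by Corollary~\ref{cor: prop A for free profinite}, since $d\ge 2=k+1$. Both words should qualify as $d$-zero-one by the definition.

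\textbf{Main obstacle.} The hard step is the reduction in the second case: passing from the definable set $S$ to a non-surjective word map in at most $d-1$ variables. Everything hinges on matching the hypothesis in Definition~\ref{def: 0-1 word} with the hypotheses of Corollary~\ref{cor: prop A for free profinite}. Our first attempt would be to show that $S$ is a union of translates of, or contained in, a verbal set of a word in the $\boldsymbol{y}$-variables, and then apply the corollary directly.
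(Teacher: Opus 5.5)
Your proposal is not yet a proof. The definition you guess is not the paper's, and the step you flag as the main obstacle is where all the content lies.

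In Definition~\ref{def: 0-1 word} a word is classified by exponent sums. Here $-n(w)$ is the exponent sum of $x$, and $l(w)$ is the gcd of the exponent sums $n_i$ of the $y_i$. Then $w$ is $d$-zero-one if
(1) $n(w)=0$; or
(2) $n(w)\neq0$ and $l(w)\mid n(w)$; or
(3) $n(w)\neq 0$ and $l(w)=0$; or
(4) $n(w)\neq0\neq l(w)$, $l(w)\nmid n(w)$, some prime $p$ divides $\gcd(1-n(w),l(w))$, and $\#\mathrm{var}(w)\le d-1$, where $\#\mathrm{var}$ counts $x$ too.

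Your dichotomy ``$S=\widehat{F}_d$ or $\mu(S)=0$'' cannot come from a normalisation $w=x^e u(\boldsymbol{y})$. In general the occurrences of $x$ are interleaved with the $y_i$. Even when they are not, for $\lvert e\rvert\neq 1$ you only learn that $P^e$, not $P$, is a $u$-value. Also, Corollary~\ref{cor: prop A for free profinite} needs non-surjectivity on some $H_{p,d}$, not merely on $\widehat{F}_d$.

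The paper instead proves fullness by explicit solutions. In case (1) take all $U_i=1$. In case (2) write $n(w)=\sum_i\alpha_i n_i$ and take $U_i=P^{\alpha_i}$; all commutators then vanish and $w(P,\boldsymbol{U})=1$.

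For the null cases you are missing two ideas.

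\emph{Case (3).} There is no bound on the number of variables (e.g.\ $x[y_1,y_2]\cdots[y_{2m-1},y_{2m}]$ for any $m$), so Theorem~\ref{prop:A} is not the tool. Instead, $P^{n(w)}$ is a product of commutators. Since the abelianisation $\widehat{\mathbb{Z}}^d$ is torsion-free, $P$ lies in the closed commutator subgroup, which has infinite index.

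\emph{Case (4).} The trick is what you gesture at with ``treat $x$ as an extra variable'', but no retraction or independence argument is needed. If $w(P,\boldsymbol{U})=1$, then $P=\tilde{w}(P,\boldsymbol{U})$ for $\tilde{w}:=x\,w=x^{1-n(w)}W(\boldsymbol{y})\prod_j\Gamma_j(x,\boldsymbol{y})$, so $S\subseteq\widehat{F}_d^{\{\tilde{w}+\}}$. The exponent sums of $\tilde{w}$ are $1-n(w),n_1,\dots,n_k$, all divisible by $p$. Hence its image in $H_{p,d}$ lies in $\Phi(H_{p,d})$, so $\tilde{w}$ is not surjective there. As $\tilde{w}$ has at most $\#\mathrm{var}(w)\le d-1$ variables, Corollary~\ref{cor: prop A for free profinite} gives $\mu(S)=0$. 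Equivalently, the paper maps $S$ into $(H_{p,d})_{\eta_w}$ and applies Corollary~\ref{cor: measure grows with quotient}.

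This explains the threshold $d\ge k+2$ rather than your $d\ge k+1$: $x$ is itself an argument of $\tilde{w}$. It also explains the gcd condition. For a word like $x^{-2}y^3$ one has $\gcd(1-n(w),l(w))=1$, so $\tilde{w}$ is surjective, and neither your sketch nor the paper gives any control. Your broader claim that every non-full $S$ is null is therefore unsupported.

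Finally, your measure-zero witness $x^{-1}y_1^2$ is not a $2$-zero-one word: it falls under (4), but $\#\mathrm{var}=2>d-1$ when $d=2$. Use a case-(3) word such as $x[y_1,y_2]$ instead. For measure one, use any non-trivial word with $n(w)=0$, e.g.\ $[x,y]$.
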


The proof of Theorem \ref{thm:0-1_intro} relies on understanding the conditions under which a word is surjective on a free pro-$p$ group $H_d$ (see Lemma \ref{lem: surjectivity conditions for words in free pro-p}); these feed into the definition of $(p,d)$-zero one words. Similar conditions on the surjectivity of words in free profinite groups (or more generally, on the words $w$ such that $(\widehat{F}_d)_{\eta_w}=\widehat{F}_d$) are listed in Lemma \ref{lem: n neq 0 and l does not divide n; measure is not 1}. With these one can show that, in analogy with discrete free groups, the sentence $\forall x:\eta_w(x)$ is true in $\widehat{F}_d$ if and only if it is true in every group (Corollary \ref{cor: trivial eq in free profinite groups}).

In contrast to free pro-$p$ and free profinite groups, the $\mathcal{F}$-spectrum of finitely generated abelian groups is very far from displaying a $0-1$ behaviour. In Proposition \ref{prop: measure of def sets in abelian groups} we give a formula for the measure of any definable subset of a torsion-free finitely generated abelian group.  
Finally, in Appendix \ref{sec: appendix A further results} we collect some further results on $\mathcal{F}$-spectra. For instance, we prove in Lemma \ref{lem: definable sets in count free have measure 0-1} that in the free profinite group of \emph{countable} rank the measure of any verbal set is either zero or one, which shows that finite generation is necessary to find verbal sets of positive measure different from one in free pro-$p$ and free profinite groups.

\subsection*{Notational conventions} 
If $G$ is a finitely generated profinite group, we denote by $\mathrm{d}(G)$ its minimal number of generators. The Frattini subgroup of $G$ will be denoted by $\Phi(G)$. 
If $N$ is a normal subgroup of finite index (respectively an open normal subgroup) in a profinite group $G$ we write $N\lhd_f G$ (respectively $N\lhd_o G$).

The abstract free group of rank $d$ will be denoted $F_d$. 
Following the notation in \cite{JZ}, unless otherwise stated, given a prime $p$ we denote by $H_d$ the free pro-$p$ group of rank $d$. If we need to specify the prime $p$ we will write $H_{p,d}$. The free profinite group of rank $d$ will be denoted $\widehat{F}_d$.

If $G$ is a group and $w$ is a word, $w(G)$ will denote the verbal subgroup generated by $w$, i.e. $w(G)=\langle G^{\lbrace w\rbrace}\rangle=\langle G^{\lbrace w+\rbrace}\rangle$. 

Given a word $w$, we denote by $\#\mathrm{var}(w)$ the number of variables occurring in $w$.
We will say that $w$ is a \emph{non-commutator word} if the sum of the exponents of at least one variable occurring in $w$ is non-zero, and we will call $w$ a \emph{commutator word} otherwise. Equivalently, a word $w\in F_k$ is a non-commutator word if and only if it does not belong to the commutator subgroup $[F_k,F_k]$. 

Finally, if $a$ and $b$ are integers and $a$ is non-zero, we write $a\mid b$ to say that $a$ divides $b$ and $a\nmid b$ ta say that $a$ does not divide $b$. For any two integers $c,d$ we write $\gcd(c,d)$ to denote the greatest common divisor of $c$ and $d$. If $\lbrace d_i\mid i\in I\rbrace$ is a finite set of integers where $I\subseteq \N$ is clear from the context, we will shorten $\gcd(d_i\mid i\in I)$ with $\gcd_i(d_i)$.

\subsection*{Acknowledgments} 
The authors would like to thank Steffen Kionke for interesting discussions and questions during the preparation of this article. The first named author was funded by the Deutsche Forschungsgemeinschaft (DFG, German Research Foundation) – Project-ID~491392403 – TRR~358. The second named author is funded by the Italian program Rita Levi Montalcini for young researchers (Edition 2021).

\newcommand{\noshow}[1]{}

\section{Preliminaries}

\subsection{Measurable sets}

Given a profinite group $G$, there exists a unique Haar measure $\mu$ defined on the Borel $\sigma$-algebra of open sets of $G$, normalised so that $\mu(G)=1$ (\cite[Proposition 21.2.1]{FJ}). This measure is countably additive and translation invariant. If $G$ is a finite group, the Haar measure coincides to the usual counting probability measure.

If $S$ is a closed subset of $G$, then its Haar measure is given by (\cite[Section 11.1, page 206]{LuSe})
\begin{equation}
\label{eq: measure r.f. group}
\mu(S)=\inf_{N\lhd_o G}\frac{\vert SN/N \vert}{\vert G/N\vert }.
                  \end{equation}

We collect here some useful facts about the Haar measure on profinite groups that we will need later on. For more theoretical background see \cite[Chapter 21]{FJ}.

A fundamental result for computing measures in profinite groups is the following.

\begin{lemma}[\cite{FJ}, Lemma 21.1.1 (a)]
Let $G$ be a profinite group and let $H$ be a closed subgroup of $G$. Then $\mu(H)=\frac{1}{[G:H]}$.
\end{lemma}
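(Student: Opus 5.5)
The statement to prove is the last lemma quoted: for a closed subgroup $H$ of a profinite group $G$, $\mu(H)=1/[G:H]$, with the right-hand side read as $0$ when the index is infinite. The plan is to split according to whether $[G:H]$ is finite or infinite. The finite case uses only translation invariance and additivity of $\mu$. The infinite case uses the formula \eqref{eq: measure r.f. group} for closed sets.

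\emph{Finite index.} Suppose $[G:H]=n<\infty$. Then $G$ is the disjoint union of the left cosets $g_1H,\dots,g_nH$. Each coset is closed, hence Borel. By left translation invariance, $\mu(g_iH)=\mu(H)$ for every $i$. Finite additivity together with $\mu(G)=1$ then gives $n\,\mu(H)=1$, so $\mu(H)=1/n$.

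\emph{Infinite index.} This is the only step needing an idea: we must show $\mu(H)=0$, i.e.\ that $H$ becomes arbitrarily small in finite quotients. Apply \eqref{eq: measure r.f. group} with $S=H$. For $N\lhd_o G$, the set $HN/N$ is a subgroup of $G/N$ of index $[G:HN]$, so
\[
\frac{|HN/N|}{|G/N|}=\frac{1}{[G:HN]}.
\]
Hence $\mu(H)=\inf_{N\lhd_o G} 1/[G:HN]$, and it suffices to show that $[G:HN]$ is unbounded as $N$ ranges over open normal subgroups.

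Suppose instead that $[G:HN]\le m$ for all $N$. Since $H$ is closed, $H=\bigcap_{N\lhd_o G} HN$ in a profinite group. Choose $m+1$ elements $x_0,\dots,x_m$ lying in pairwise distinct left cosets of $H$; this is possible because $[G:H]$ is infinite. Each element $x_i^{-1}x_j$ with $i\neq j$ lies outside $H$, so it lies outside $HN_{ij}$ for some open normal subgroup $N_{ij}$. Take $N=\bigcap_{i\neq j}N_{ij}$, which is a finite intersection and hence open normal. Then $HN\subseteq HN_{ij}$ for every pair, so the $x_i$ lie in pairwise distinct cosets of $HN$. This forces $[G:HN]\ge m+1$, a contradiction. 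Therefore the infimum is $0$ and $\mu(H)=0=1/[G:H]$.
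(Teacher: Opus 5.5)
Your proof is correct. The finite-index case is the coset decomposition plus translation invariance, and in the infinite-index case your argument that the open subgroups $HN\supseteq H$ have unbounded index is sound; you could even bypass \eqref{eq: measure r.f. group}, since monotonicity and the finite case already give $\mu(H)\le\mu(HN)=1/[G:HN]$. The paper gives no proof of its own here, only the citation to Fried--Jarden, and your argument is essentially the standard one for this fact.
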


From this lemma and the translation invariance of the Haar measure it immediately follows:

\begin{corollary}[\cite{FJ}, Lemma 21.1.1 (c)]
    Let $U$ be an open subset of a profinite group $G$. Then $\mu(U)>0$.
\end{corollary}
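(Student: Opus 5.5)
The statement to prove is the corollary: if $U$ is an open subset of a profinite group $G$, then $\mu(U)>0$. (We assume $U$ is nonempty, as is implicit.) The plan is to reduce to the preceding lemma $\mu(H)=1/[G:H]$ for closed subgroups $H$, together with translation invariance and monotonicity of $\mu$.

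First, pick a point $g\in U$. The translate $g^{-1}U$ is an open neighbourhood of the identity. In a profinite group the open normal subgroups form a base of neighbourhoods of $1$. Hence there is $N\lhd_o G$ with $N\subseteq g^{-1}U$, and therefore $gN\subseteq U$.

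Next, $N$ is open, so it is also closed, since its complement is a union of cosets of $N$, each of which is open. Moreover $N$ has finite index, because the cosets of $N$ form an open cover of the compact space $G$. The preceding lemma then gives $\mu(N)=1/[G:N]>0$.

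Finally, translation invariance gives $\mu(gN)=\mu(N)$. Since $gN\subseteq U$, monotonicity of the measure yields $\mu(U)\ge 1/[G:N]>0$.

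The only step needing any care is the existence of an open normal subgroup inside a given neighbourhood of $1$. This is the standard characterisation of profinite groups as inverse limits of finite groups, so there is no real obstacle.
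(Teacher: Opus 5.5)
Your proof is correct and follows the same route the paper indicates: find an open normal subgroup $N$ with $gN\subseteq U$, then combine the lemma $\mu(N)=1/[G:N]$ with translation invariance and monotonicity. You were also right to make the nonemptiness of $U$ explicit, since the statement is false for $U=\emptyset$.
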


The following result relates the measure of a set of a profinite group $G$ with the measure of its image in a quotient of $G$.

\begin{proposition}[\cite{FJ}, Proposition 21.2.2]
\label{prop: quotient measure of a set}
Let $\pi\colon G\rightarrow H$ be an epimorphism of profinite groups and let $\mu_G$ and $\mu_H$ be the Haar measures on $G$ and $H$ respectively. Then $\mu_H(B)=\mu_G(\pi^{-1}(B))$ for every measurable set $B$ of $H$. 
\end{proposition}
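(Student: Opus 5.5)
We prove the statement directly from the formula for the Haar measure on the Borel $\sigma$-algebra, by reducing to finite quotients. Let $\pi\colon G\to H$ be an epimorphism, and let $\mathcal{M}$ be the collection of Borel sets $B\subseteq H$ with $\mu_H(B)=\mu_G(\pi^{-1}(B))$.

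\textbf{Step 1: $\mathcal{M}$ contains enough basic sets.} Define $\nu(B):=\mu_G(\pi^{-1}(B))$ for Borel $B\subseteq H$. Since $\pi$ is continuous, $\pi^{-1}(B)$ is Borel, so $\nu$ is a well-defined Borel probability measure on $H$, and it is countably additive because preimages preserve disjoint unions. Take $h\in H$ and choose $g\in G$ with $\pi(g)=h$, which is possible since $\pi$ is surjective. Then $\pi^{-1}(hB)=g\,\pi^{-1}(B)$, so translation invariance of $\mu_G$ gives $\nu(hB)=\nu(B)$. Hence $\nu$ is a left-invariant normalised Borel measure on $H$. By the uniqueness of the normalised Haar measure (\cite[Proposition 21.2.1]{FJ}), $\nu=\mu_H$, which is exactly the claim.

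\textbf{Step 2: the main obstacle.} The one delicate point is whether the uniqueness statement quoted above requires regularity of the measure. If it does, regularity of $\nu$ has to be checked. For a compact metrisable (second countable) group every finite Borel measure is regular, but a general profinite group need not be metrisable.

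\textbf{Step 3: if regularity is a problem.} In that case I would avoid uniqueness altogether and argue elementarily, as in \cite{FJ}. First, for an open normal subgroup $M\lhd_o H$, the preimage $\pi^{-1}(M)$ is an open normal subgroup of $G$ of the same index, so by the lemma on closed subgroups both sides equal $1/[H:M]$. Translates of such subgroups, together with their finite disjoint unions, form an algebra that generates the Borel $\sigma$-algebra. The two measures agree on this algebra and are both countably additive. By Carathéodory's extension theorem in its uniqueness form (equivalently, a monotone class or Dynkin argument), they agree on the whole generated $\sigma$-algebra, so $\mathcal{M}$ contains every Borel set.

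Measurability in the completed sense follows the same way, since null sets pull back to null sets.
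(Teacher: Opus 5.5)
\textbf{There is a genuine gap: the regularity of the push-forward $\nu$ is never established, and your Step 3 workaround fails for non-metrisable groups.}

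Your Step 1 is the right argument, and it is essentially the standard proof behind the result the paper cites from \cite{FJ}: the push-forward $\nu=\mu_G\circ\pi^{-1}$ is a normalised translation-invariant Borel measure on $H$, hence equals $\mu_H$ by uniqueness. However, the uniqueness you quote is uniqueness among \emph{regular} measures; a Haar measure in \cite{FJ} is regular by definition. So the point you isolate in Step 2 is exactly what must be proved, and you leave it unproved.

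Your fallback in Step 3 fails in precisely the case that motivated it. In a non-metrisable profinite group, the clopen sets (finite unions of cosets of open normal subgroups) generate only the Baire $\sigma$-algebra, not the Borel $\sigma$-algebra. For instance, in $(\mathbb{Z}/2\mathbb{Z})^I$ with $I$ uncountable, every set in the $\sigma$-algebra generated by the clopen sets depends on only countably many coordinates. A singleton is closed, hence Borel, but does not lie in that $\sigma$-algebra. The Dynkin/Carath\'eodory argument therefore only gives $\nu=\mu_H$ on Baire sets. Passing to all Borel sets again needs regularity of both measures, which is what you were trying to avoid. In the metrisable case Step 3 is fine, but there Step 2 already makes regularity automatic, so Step 3 adds nothing.

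The missing ingredient is short and uses only compactness:
\begin{itemize}
\item Fix a Borel set $B\subseteq H$ and $\varepsilon>0$. Regularity of $\mu_G$ gives a closed $C$ and an open $U$ in $G$ with $C\subseteq\pi^{-1}(B)\subseteq U$ and $\mu_G(U\setminus C)<\varepsilon$.
\item Since $G$ is compact and $H$ is Hausdorff, $\pi(C)$ and $\pi(G\setminus U)$ are closed. So $V:=H\setminus\pi(G\setminus U)$ is open, and $\pi(C)\subseteq B\subseteq V$.
\item Moreover $C\subseteq\pi^{-1}(\pi(C))$ and $\pi^{-1}(V)\subseteq U$, whence $\nu(V\setminus\pi(C))\le\mu_G(U\setminus C)<\varepsilon$.
\end{itemize}
Thus $\nu$ is regular, and your Step 1 then concludes.

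Alternatively, once regularity of $\nu$ is known, your computation on cosets of open normal subgroups also finishes the proof. In a profinite group every open neighbourhood of a compact set contains a clopen one, so a regular measure is determined by its values on clopen sets.

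Your closing remark about sets measurable in the completed sense is fine once equality on Borel sets is in place.
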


\begin{corollary}
\label{cor: measure grows with quotient}
    Let $\pi\colon G\rightarrow H$ be an epimorphism of profinite groups and let $A$ be a measurable set of $G$ such that $\pi(A)$ is measurable in $H$ and $\mu_H(\pi(A))=0$. Then $\mu_G(A)=0$.
\end{corollary}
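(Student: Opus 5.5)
The statement to prove is Corollary~\ref{cor: measure grows with quotient}. I would deduce it directly from Proposition~\ref{prop: quotient measure of a set} together with monotonicity of the Haar measure. The key observation is the containment
\[
A\subseteq \pi^{-1}(\pi(A)),
\]
which holds for any map and any subset of the domain.

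Since $\pi(A)$ is assumed measurable in $H$, Proposition~\ref{prop: quotient measure of a set} applies to $B=\pi(A)$. It shows that $\pi^{-1}(\pi(A))$ is measurable in $G$ and gives
\[
\mu_G\bigl(\pi^{-1}(\pi(A))\bigr)=\mu_H(\pi(A))=0 .
\]
The set $A$ is measurable by hypothesis, and the Haar measure is countably additive. Writing $\pi^{-1}(\pi(A))$ as the disjoint union of $A$ and $\pi^{-1}(\pi(A))\setminus A$, we get
\[
0\le \mu_G(A)\le \mu_G\bigl(\pi^{-1}(\pi(A))\bigr)=0,
\]
and hence $\mu_G(A)=0$.

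There is no real obstacle here. The only point needing care is measurability, and the hypotheses supply it exactly where it is needed: measurability of $\pi(A)$ lets us invoke Proposition~\ref{prop: quotient measure of a set}, and measurability of $A$ lets us apply monotonicity. In the intended applications $A$ is closed, being the continuous image of a compact set, so both conditions hold automatically.
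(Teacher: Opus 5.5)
Your proof is correct and is essentially the paper's proof: the paper likewise just observes $A\subseteq \pi^{-1}(\pi(A))$ and applies Proposition~\ref{prop: quotient measure of a set} to $B=\pi(A)$, with monotonicity finishing the argument. Your extra remarks on where measurability is used are accurate but are not needed beyond what the hypotheses already provide.
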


\begin{proof}
    Observe that $A\subseteq \pi^{-1}(\pi(A))$ and use Proposition \ref{prop: quotient measure of a set}.
\end{proof}

Finally, we will use the following basic observation.

\begin{lemma}
    \label{lem: measure closed set =1 iff C=G}
    Let $G$ be a profinite group and let $C$ be a closed subset of $G$. Then $\mu_G(C)=1$ if and only if $C=G$.
\end{lemma}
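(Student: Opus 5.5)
One direction is immediate: if $C=G$ then $\mu_G(C)=\mu_G(G)=1$ by the normalisation of the Haar measure. For the converse I will argue by contraposition. Assume $C\neq G$ and show that $\mu_G(C)<1$. The guiding idea is that the complement of a proper closed subset is a non-empty open set, and non-empty open sets have positive measure.

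First, pick $g\in G\setminus C$. Since $C$ is closed, $U=G\setminus C$ is an open neighbourhood of $g$. In a profinite group the open normal subgroups form a base of neighbourhoods of the identity, so there is $N\lhd_o G$ with $gN\subseteq U$. Equivalently, $gN\cap C=\emptyset$.

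Next, I estimate the measure of $C$ using formula \eqref{eq: measure r.f. group}, which applies because $C$ is closed. For this particular $N$, the coset $gN$ is not contained in $CN$. Indeed, if $gN=cN$ for some $c\in C$, then $c\in gN$, contradicting $gN\cap C=\emptyset$. Hence $CN/N$ is a proper subset of $G/N$, so $\lvert CN/N\rvert\le \lvert G/N\rvert-1$. Taking the infimum over open normal subgroups gives
\[
\mu_G(C)\le \frac{\lvert CN/N\rvert}{\lvert G/N\rvert}\le 1-\frac{1}{[G:N]}<1 .
\]

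Alternatively, one can avoid the formula altogether. Because $gN\subseteq G\setminus C$, additivity gives $\mu_G(C)\le 1-\mu_G(gN)$. By translation invariance and \cite[Lemma 21.1.1]{FJ}, $\mu_G(gN)=1/[G:N]>0$, which yields the same bound.

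No step is a genuine obstacle. The only point needing care is the existence of an open normal subgroup $N$ with $gN\subseteq U$, and this is the standard fact that open normal subgroups form a neighbourhood base of $1$ in a profinite group.
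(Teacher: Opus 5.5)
Your proof is correct and follows the same idea as the paper: the complement $G\setminus C$ is open, and a non-empty open set has positive measure. The paper cites this last fact directly (the corollary of \cite[Lemma 21.1.1]{FJ}), while you prove it inline by exhibiting a coset $gN\subseteq G\setminus C$ of measure $1/[G:N]$, either via formula \eqref{eq: measure r.f. group} or via additivity and translation invariance.
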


    \begin{proof}
        Suppose that $\mu_G(C)=1$. Then $\mu_G(G\setminus C)=0$ and $G\setminus C$ is open, which forces $G\setminus C=\emptyset$ and so $C=G$.
    \end{proof}

\subsection{Width}
\label{sec: preliminaries, width}

Recall from the Introduction that, given a word $w$ and a group $G$, we denote by $G^{\{w\}}$ the set of $w$-values in $G$, i.~e., $G^{\{w\}}=\{ g\in G \mid \exists \boldsymbol{h}\in G^k:\ g=w(\boldsymbol{h})^{\pm 1}\}$.

A word $w\in F_k$ in some abstract free group $F_k$ has \emph{finite width} in a group $G$ if there exists $m\in \mathbb{N}$ such that each element of the verbal group $w(G)=\langle G^{\lbrace w\rbrace}\rangle$ can be written as a product of at most $m$ $w$-values.
If $G$ is a profinite group, this is equivalent to the verbal subgroup $w(G)$ being closed in $G$ (\cite[Proposition 4.1.2]{S}).
If a word $w$ has finite width in a group $G$, we call the minimal $m\in\N$ such that every element of $w(G)$ can be written as a product of at most $m$ $w$-values \emph{the width} of $w$ in $G$.

By \cite[Theorem~1.2]{JZ}, if a word $w\in F_k$ has infinite width in a free pro-$p$ group $H$ of finite rank, then $w(H) \le (H')^p H''$, and hence, as the latter is a subgroup of infinite index in $H$, we can deduce that $\mu(H^{\{w\}})=0$. Note that the same argument applies to words that have infinite width in any finitely generated pro-$p$ group $G$ such that $(G')^pG''$ has infinite index in $G$.

Therefore we can restrict our attention to words of finite width, i.e.\ those not belonging to $(F')^p F''$.
We will now show that, starting from words of finite width, we can construct words whose images have positive Haar measure. We first need some notation.

 For each positive integer $i$, let $\varepsilon_i\in\lbrace 1, - 1\rbrace$ and set $\boldsymbol{\varepsilon}$ to be the sequence $(\varepsilon_i)_{i\in \mathbb N}$. Given any positive integer $m$, let $\boldsymbol{\varepsilon}_m:=(\varepsilon_i)_{i=1}^m$ be the $m$-tuple whose entries are the first $m$ entries of $\boldsymbol{\varepsilon}$. Define the word $w^{\ast n, \boldsymbol{\varepsilon}}$ for every $n\geq 1$ inductively as follows: 
\begin{align*}
w^{\ast 1, \boldsymbol{\varepsilon}}(\boldsymbol{x}_1) &= w(\boldsymbol{x}_1)^{\varepsilon_1}\\  w^{\ast (n+1), \boldsymbol{\varepsilon}}(\boldsymbol{x}_1,\ldots,\boldsymbol{x}_{n+1}) &= w^{\ast n, \boldsymbol{\varepsilon}}(\boldsymbol{x}_1,\ldots, \boldsymbol{x}_n) \cdot w(\boldsymbol{x}_{n+1})^{\varepsilon_{n+1}},
\end{align*}
where $\boldsymbol{x}_1,\ldots,\boldsymbol{x}_{n+1}$ are $k$-tuples of variables such that $\boldsymbol{x}_i$ is disjoint from $\boldsymbol{x}_j$ for every $i\neq j$.

For simplicity of notation, if all exponents $\varepsilon_i$ are equal to $1$, we will denote
$$w^{\ast n}(\boldsymbol{x}_1,\ldots, \boldsymbol{x}_n):=w^{\ast n, (1,\ldots,1)}(\boldsymbol{x}_1,\ldots, \boldsymbol{x}_n). $$

The following lemma follows immediately from the definitions.

\begin{lemma}
\label{lem: if finite width, verbal subgroup = union of verbal sets}
    Let $w\in F_k$ be a word in $k$ variables with finite width $m$ in a group $G$. Then 
    $$w(G) = \bigcup_{(\varepsilon_1,\ldots,\varepsilon_m)\in\lbrace 1, -1\rbrace^{m}} G^{\lbrace w^{\ast m, (\varepsilon_1,\ldots,\varepsilon_m)}\rbrace}.$$
\end{lemma}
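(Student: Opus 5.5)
We prove the two inclusions separately. Throughout, set $\Sigma = \bigcup_{\boldsymbol{\varepsilon}_m\in\lbrace 1,-1\rbrace^m} G^{\lbrace w^{\ast m,\boldsymbol{\varepsilon}_m}\rbrace}$.

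\emph{The inclusion $\Sigma\subseteq w(G)$.} This is immediate. A value of $w^{\ast m,\boldsymbol{\varepsilon}_m}$ has the form $w(\boldsymbol{h}_1)^{\varepsilon_1}\cdots w(\boldsymbol{h}_m)^{\varepsilon_m}$, which is a product of $w$-values and so lies in $w(G)$. Recall that $G^{\lbrace\cdot\rbrace}$ also includes inverses of values. The inverse of such a product is
\[
w(\boldsymbol{h}_m)^{-\varepsilon_m}\cdots w(\boldsymbol{h}_1)^{-\varepsilon_1},
\]
which again lies in $w(G)$.

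\emph{The inclusion $w(G)\subseteq \Sigma$.} Let $g\in w(G)$. By the definition of width $m$, we can write $g=v_1\cdots v_r$ with $r\le m$ and each $v_i\in G^{\lbrace w\rbrace}$, so $v_i=w(\boldsymbol{h}_i)^{\varepsilon_i}$ for some $\boldsymbol{h}_i\in G^k$ and $\varepsilon_i\in\lbrace 1,-1\rbrace$.

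The only point needing care is $r<m$, since we need exactly $m$ factors. We pad the product with trivial $w$-values: the identity is always a $w$-value, because $w(1,\ldots,1)=1$. For $r<i\le m$ take $\boldsymbol{h}_i=(1,\ldots,1)$ and $\varepsilon_i=1$. The case $g=1$ is covered by taking $r=0$.

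Then $g=w^{\ast m,(\varepsilon_1,\ldots,\varepsilon_m)}(\boldsymbol{h}_1,\ldots,\boldsymbol{h}_m)$. This expression is legitimate because the tuples of variables $\boldsymbol{x}_i$ in the concatenation are pairwise disjoint, so they may be evaluated independently. Hence $g\in\Sigma$.

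There is no real obstacle here. The one subtle step is the padding by trivial $w$-values, which is what allows the union to be taken over sign patterns of length exactly $m$.
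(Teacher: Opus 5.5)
Your proof is correct and is the routine verification the paper means when it says the lemma ``follows immediately from the definitions''. The inclusion of the union in $w(G)$ holds because $w(G)$ is a subgroup. For the reverse inclusion, your padding of a product of $r\le m$ $w$-values with copies of $w(1,\ldots,1)=1$, to obtain exactly $m$ factors, is the one detail that needs saying, and you handled it properly.
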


As a consequence, we readily get the following.

\begin{corollary}
\label{rmk: concatenation of word of finite width has positive measure}
Consider $w\in F_k$ a word in $k$ variables of finite width $m$ in a profinite group $G$.  
Suppose that $w(G)$ has finite index in $G$. Then, the image of at least one of the words in $km$ disjoint variables 
\begin{equation*}\label{eq:concatenation}
    w^{\ast m, (\varepsilon_1,\ldots,\varepsilon_m)}(x_1,\ldots,x_{km}) = w^{\varepsilon_1}(x_1,\ldots,x_k) \cdots w^{\varepsilon_m}(x_{k(m-1)+1},\ldots,x_{km})
\end{equation*}
has positive Haar measure. 
\end{corollary}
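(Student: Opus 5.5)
The plan is to combine Lemma \ref{lem: if finite width, verbal subgroup = union of verbal sets} with the fact that a subgroup of finite index has positive measure, and then use finite subadditivity of $\mu$.

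First, since $w(G)$ has finite index in $G$ and $w$ has finite width in $G$, the verbal subgroup $w(G)$ is closed. Hence it is open, being a closed subgroup of finite index. By \cite[Lemma 21.1.1 (a)]{FJ} we get $\mu(w(G)) = 1/[G:w(G)] > 0$. Next, by Lemma \ref{lem: if finite width, verbal subgroup = union of verbal sets}, $w(G)$ is the finite union of the $2^m$ sets $G^{\{w^{\ast m,\boldsymbol{\varepsilon}_m}\}}$, one for each choice of $(\varepsilon_1,\ldots,\varepsilon_m)\in\{1,-1\}^m$. Finite subadditivity then gives
\[
0<\mu(w(G)) \le \sum_{\boldsymbol{\varepsilon}_m\in\{\pm1\}^m} \mu\bigl(G^{\{w^{\ast m,\boldsymbol{\varepsilon}_m}\}}\bigr),
\]
so at least one summand is positive.

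The only point needing care is measurability. Each word map $G^{km}\to G$ is continuous and $G^{km}$ is compact, so the image $G^{\{w^{\ast m,\boldsymbol\varepsilon_m}+\}}$ is closed. Its set of inverses is closed too, so $G^{\{w^{\ast m,\boldsymbol\varepsilon_m}\}}$ is a union of two closed sets and is closed. Hence all sets involved are Borel.

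It remains to pass from $G^{\{\cdot\}}$ to the image itself, which is what the statement calls the image of the word. By the definition of $G^{\{\cdot\}}$, each such set is the union of the image $G^{\{w^{\ast m,\boldsymbol\varepsilon_m}+\}}$ and its set of inverses. Inversion preserves Haar measure on a compact group, so positivity of the union forces the image itself to have positive measure. This is also the content of the remark that $\mu(G^{\{w\}})=0$ if and only if $\mu(G^{\{w+\}})=0$.

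I expect no real obstacle. The only subtle step is checking that the finite-width hypothesis gives closedness of $w(G)$, which is where \cite[Proposition 4.1.2]{S} is used.
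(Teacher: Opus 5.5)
Your proposal is correct and is essentially the paper's argument. Lemma~\ref{lem: if finite width, verbal subgroup = union of verbal sets} writes $w(G)$, of measure $1/[G:w(G)]>0$, as a finite union of the sets $G^{\{w^{\ast m,\boldsymbol{\varepsilon}}\}}$, so one of them has positive measure. Your additional checks are correct and fill in details the paper leaves implicit: the images are closed, and positivity passes from $G^{\{\cdot\}}$ to $G^{\{\cdot+\}}$ by inversion-invariance of $\mu$. Note also that closedness of $w(G)$ already follows from your compactness observation, so you do not need to cite Segal.
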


\begin{proof}
    By Lemma \ref{lem: if finite width, verbal subgroup = union of verbal sets} the union of the images of the words $w^{\ast m, (\varepsilon_1,\ldots,\varepsilon_m)}$ coincides with the verbal subgroup $w(G)$, and therefore has positive Haar measure, given by the index of $w(G)$ in $G$. It follows that at least one of the sets in the union has positive Haar measure.
\end{proof}

\begin{example}
Let $G$ be a finitely generated pro-$p$ group with minimal number of generators $d:=\mathrm{d}(G)$, and consider the word $w(x,y,z)=x^p [y,z]$. Since the commutator word has width $d$ in $G$ \cite[Lem.~1.23]{DdSMS99}, the word $w$ has also width $d$ in $G$. Moreover, in this case  
$$ w^{\ast d, \boldsymbol{\varepsilon}}= w^{\ast d, (1,\ldots,1)}=w^{\ast d}$$
for every choice of the $d$-tuple $ \boldsymbol{\varepsilon}$.
Since the Frattini subgroup $\Phi(G)$ of $G$ is the open subgroup given by
$$\Phi(G)=G^p[G, G]= G^{\lbrace w^{\ast d}\rbrace},$$
the image of the word $w^{\ast d}$ has positive Haar measure in $G$, given by the index $[G:\Phi(G)]=p^{-d}$.
\end{example}

\subsection{Some observations on surjective word maps in pro-\texorpdfstring{$p$}{p} groups}

The following lemma is probably well-known to experts, but, since we could not find a satisfactory reference for our purposes, we include it here for the reader's convenience.

\begin{lemma}\label{lem:non-surj} 
Let $w\in F_k$ be a word in $k$ variables and let $G$ be a pro-$p$ group.
\begin{enumerate}
    \item The word map $w: G^k\to G$ is surjective if and only if the word map $\overline{w}: (G/\Phi(G))^k \to G/\Phi(G)$ is surjective.
    \item \label{item: im non-surj is in frattini} The image of a non-surjective word map $w$ is contained in $\Phi(G)$. 
    \item \label{item: lemma non-surj non-surj word is id in elem ab p-gps} Any non-surjective word is an identity in any elementary abelian $p$-group. 
    \item Any element of $\Phi(G)$ is the image of some non-surjective word map.
\end{enumerate}
\begin{proof}

\begin{enumerate}

   \item If the word map $w$ is surjective on $G$, it is clear that $\overline{w}$ is surjective on the Frattini quotient. On the other hand, suppose that $\overline{w}$ is surjective on $G/\Phi(G)$, and write
   \[ 
     \overline{w}(\boldsymbol{g}\Phi(G)) = ( e_1 g_1 + \ldots + e_d g_d  ) \Phi(G)
   \]
   where the $e_i$ are the exponent sums of the variables modulo $p$. Since $\overline{w}$ is surjective, without loss of generality we can suppose that $e_1 \not\equiv 0 \mod p$. Hence, $e_1$ is invertible modulo $p$ and therefore the word $w(x_1,1,\ldots,1)=x_1^{e_1}$ is surjective on $G$. 
   
   \item \label{item: 2} For a word map to be non-surjective on $G/\Phi(G)$, we must have that all exponent sums are multiples of $p$, hence the image is in $\Phi(G)$. 
   
   \item This follows from the proof of \eqref{item: im non-surj is in frattini}.

   \item Finally, since the word $x^p[y,z]$ has finite width in $G$, say $m$, any element of $\Phi(G)$ is in the image of the word $(x^p[y,z])^{\ast m}$. 
   \end{enumerate}
   \end{proof}
\end{lemma}

  \begin{corollary}
  \label{cor: not surj on free pro-p is not surj on pro-p}
        Let $w\in F_k$ be a word that is not surjective on the free non-abelian pro-$p$ group $H_d$ of rank $d\geq 2$. Then $w$ is not surjective on any $d$-generated pro-$p$ group. In particular, $\mu_{H_d}(H_d^{\lbrace w+\rbrace})=1$ if and only if $\mu_{G}(G^{\lbrace w+\rbrace})=1$ for some (and hence for every) pro-$p$ group $G$. 
 \end{corollary}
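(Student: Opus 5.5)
By Lemma \ref{lem:non-surj}(1), non-surjectivity of a word map on a pro-$p$ group $G$ is detected entirely on the Frattini quotient $G/\Phi(G)$. This quotient is an elementary abelian $p$-group of rank $\mathrm{d}(G)$. So the plan is to reduce everything to a statement about the exponent sums $e_1,\ldots,e_k$ of the variables of $w$.

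First, since $w$ is not surjective on $H_d$, the proof of Lemma \ref{lem:non-surj}(1)--(2) gives $e_i\equiv 0 \pmod p$ for every $i$: otherwise some $x_i^{e_i}$ with $e_i$ invertible mod $p$ would already be surjective. This condition depends only on $w$, not on the group. Hence, by the proof of Lemma \ref{lem:non-surj}(1) again, $w$ is non-surjective on every pro-$p$ group $G$. This is exactly the direction needed, since $\mathrm{d}(G)\le d$ plays no role.

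Second, I would prove the equivalence of measures. By Lemma \ref{lem: measure closed set =1 iff C=G}, $\mu_G(G^{\{w+\}})=1$ holds if and only if $G^{\{w+\}}=G$, because the image of a word map is closed in a profinite group. Indeed, $G^{\{w+\}}$ is the continuous image of the compact space $G^k$ in the Hausdorff space $G$. So the measure-one condition is equivalent to surjectivity. By the first step together with Lemma \ref{lem:non-surj}(1), surjectivity on any one pro-$p$ group (say $H_d$, or any nontrivial $G$) is equivalent to some $e_i\not\equiv 0 \pmod p$. That condition is independent of $G$, which gives the ``some, hence every'' clause.

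The main obstacle is the trivial group. For $G=1$ every word is surjective, so $G$ should be read as a nontrivial pro-$p$ group, presumably the intended meaning. For nontrivial $G$ the Frattini quotient is a nonzero $\mathbb{F}_p$-vector space. On it, $\overline{w}(\boldsymbol g)=\sum e_i g_i$ is surjective if and only if some $e_i$ is a unit mod $p$, since otherwise the map is identically zero. The rest is routine bookkeeping.
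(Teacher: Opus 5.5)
Your proof is correct, but it takes a different route from the paper's.

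\textbf{The paper's route.} It argues through quotients. It takes the projection $\pi\colon H_d\to G$ with kernel $K$ and notes that $G^{\{w+\}}=\pi(H_d^{\{w+\}}K)$ and $\Phi(G)=\pi(\Phi(H_d)K)$. It then transports the containment $H_d^{\{w+\}}\subseteq\Phi(H_d)$ from Lemma~\ref{lem:non-surj}(2) down to $G$. This is structural and avoids computing exponent sums. However, it only reaches quotients of $H_d$, i.e.\ $d$-generated groups.

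\textbf{Your route.} You extract from the proof of Lemma~\ref{lem:non-surj}(1) a criterion that depends only on $w$: $w$ is surjective on a nontrivial pro-$p$ group if and only if some exponent sum $e_i$ is prime to $p$.

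\textbf{What your approach buys.}
\begin{itemize}
\item The criterion applies to every nontrivial pro-$p$ group, whatever its number of generators. So it actually justifies the ``for some (hence every) pro-$p$ group'' clause of the second sentence. The quotient argument on its own covers that clause only for $d$-generated $G$, in both directions: neither surjectivity nor non-surjectivity on $H_d$ transfers to, say, $H_{d+1}$ by passing to quotients.
\item You make explicit that $G^{\{w+\}}$ is closed, as a continuous image of the compact space $G^k$. This is what turns ``measure one'' into ``surjective'' via the lemma that a closed set of measure one is the whole group; the paper leaves this step implicit.
\end{itemize}

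\textbf{The trivial group.} Your caveat is right, and it applies equally to the paper's argument. The containment $G^{\{w+\}}\subseteq\Phi(G)$ forces non-surjectivity only when $\Phi(G)\neq G$, i.e.\ when $G\neq 1$.
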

\begin{proof}
    By Lemma \ref{lem:non-surj}, \eqref{item: im non-surj is in frattini}, the image of $w: H_d^k\rightarrow H_d$ is contained in $\Phi(H_d)$. Given any $d$-generated pro-$p$ group $G$, the equality of sets $G^{\lbrace w+\rbrace}=\pi(H_d^{\lbrace w+\rbrace}K)$ holds, where $\pi:H_d\rightarrow G$ is the natural projection and $K$ denotes its kernel. Similarly, $\Phi(G)=\pi( \Phi(H_d)K)$. In particular, if $H_d^{\lbrace w+\rbrace}\subseteq \Phi(H_d)$, then $G^{\lbrace w+\rbrace}\subseteq \Phi(G)$, and $w$ is not surjective in $G$.
\end{proof}

Even if not directly relevant for our work, we observe that Lemma \ref{lem:non-surj} can be used to give an alternative proof that there exist words with image of positive measure in any finitely generated pro-$p$ group. 
In fact, we know that
\[
   \Phi(G)= \bigcup 
   G^{\{w\}}
\]
where the union is taken over the non-surjective words $w$.
By Baire category theorem \cite[IX, \S 5, pp.\ 190--195]{bou}, there must be a word $w$ whose image has non-empty interior, hence, positive measure.

\subsection{Definable sets as measurable sets}
\label{sec: preliminaries on formulae}

Recall that the language of groups is given by $\mathcal{L}_{\text{gp}}=\lbrace 1, \cdot, ^{-1}\rbrace.$ For our purposes we regard a first-order formula in the language of groups as a finite string of symbols built using the symbols of $\mathcal{L}_{\text{gp}}$, variable symbols, the equality symbol ($=$), the connectives ``and'' ($\wedge$), ``or'' ($\vee$), ``not'' ($\neg$), the existential quantifier ($\exists$), the universal quantifier ($\forall$) and parentheses.

If the existential quantifier is the only quantifier occurring in a formula $\varphi$, one says that $\varphi$ is an \emph{existential formula}.
A variable in a formula is said to be \textit{free} if it is not bound to any quantifier.  

We will simply call \emph{formula} a first-order formula in the language of groups with one free variable and without parameters (i.e., when evaluating formulae in a group $G$, no constants from $G$ are allowed). If $\varphi$ is a formula and $x$ is a free variable, we write $\varphi(x)$.
Therefore, a formula $\varphi(x)$ is equivalent to a formula given by the following expression
\begin{equation}
\label{eq: general form of a formula}
\mathbf{Q}\ \boldsymbol{y}:\bigvee_{j=1}^m \left(w_{j, 1}(x,\boldsymbol{y})\#_{j,1} 1\wedge\cdots\wedge w_{j,k_j}(x,\boldsymbol{y})\#_{j,k_j}1\right),
\end{equation}
where $\mathbf{Q}$ is any string of quantifiers in $\lbrace\forall,\exists\rbrace$, every $w_{j,i}$ is a word, $m$ and the $k_j$'s are non-negative integers, and $\#_{j,i}\in \lbrace =, \neq\rbrace$.

Given a group $G$, the set defined by a formula $\varphi$ is given by 
$$G_\varphi:=\lbrace g\in G\mid \varphi(g) \ \text{is true} \rbrace.$$ 

If a set $S\subseteq G$ can be defined by an existential formula, we will say that $S$ is an \emph{existentially definable} (in symbols, $\exists$-definable) set.
Note that both $G^{\lbrace w\rbrace}$ and $G^{\lbrace w+\rbrace}$ are clearly $\exists$-definable sets.

From (\ref{eq: general form of a formula}) it follows that any existential formula is equivalent to a formula of the form
\begin{equation}
\label{eq: general form of an existential formula}
\exists \boldsymbol{y}:\bigvee_{j=1}^m \left(\bigwedge_{i=1}^{\ell_j} w_{j,i}(x,\boldsymbol{y})=1\wedge\bigwedge_{i=\ell_j+1}^{k_j}  w_{j,i}(x,\boldsymbol{y})\neq 1\right), 
\end{equation}

where $m$ and the $k_j$'s are positive integers, the $\ell_j$'s are non-negative integers and the $w_{j,i}$'s are words.

A special case of existential formulae is given by \emph{positive boolean combinations of words}.
Let $\lbrace w_{j}\mid j=1,\ldots, m \rbrace$ be a collection of words on disjoint sets of variables, where $m$ is a positive integer. A boolean combination $\mathbf{b}$ of these words is a boolean combination of the formulae $\exists\boldsymbol{y}: x=w_{j,i}(\boldsymbol{y})$. 

For notational convenience, given a word $w$ in $k+1$ variables, we denote by $\eta_w$ the formula
$$\eta_w:= \exists \boldsymbol{y}: w(x,\boldsymbol{y})=1,$$
and call it the \emph{equation (formula) related to $w$}.
If $w$ is clear from the context we will omit it, and, if $w=w_{j,i}$, we will also write $\eta_{j,i}$ for the corresponding formula. Similarly, we denote by $\iota_w$ (respectively $\iota$, or $\iota_{j,i}$) the formula 
$$\iota_w:= \exists \boldsymbol{y}: w(x,\boldsymbol{y})\neq 1,$$
and call it the \emph{inequality (formula) related to $w$}.

\begin{lemma}
    \label{lem: G_eta closed, G_iota open}
 Let $\eta$ (respectively~$\iota$) be an equation (respectively~inequality) as defined above, and let $G$ be a profinite group. Then $G_\eta$ is closed and $G_\iota$ is open. In particular, $G_\eta$ and $G_\iota$ are measurable.
\end{lemma}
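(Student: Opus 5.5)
The plan is to write both sets as images or complements of images of closed sets under continuous maps, and then use compactness. Fix a profinite group $G$ and a word $w(x,\boldsymbol{y})$ in $k+1$ variables. The word map $w\colon G^{k+1}\to G$ is continuous, because multiplication and inversion in $G$ are continuous. Let $\pi\colon G^{k+1}\to G$ be the projection onto the first coordinate.

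For the equation, I would write
\[
G_\eta = \pi\bigl(w^{-1}(1)\bigr).
\]
The set $\{1\}$ is closed in the Hausdorff space $G$, so $w^{-1}(1)$ is closed in $G^{k+1}$. Since $G^{k+1}$ is compact, $w^{-1}(1)$ is compact. Its image under the continuous map $\pi$ is therefore compact, and hence closed in $G$. This step is where compactness of profinite groups is essential: a projection of a closed set need not be closed in general.

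For the inequality, I would write
\[
G_\iota = \pi\bigl(w^{-1}(G\setminus\{1\})\bigr).
\]
Here $w^{-1}(G\setminus\{1\})$ is open in $G^{k+1}$. Projections of product spaces are open maps, so $G_\iota$ is open.

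For measurability, closed and open sets are Borel, so both $G_\eta$ and $G_\iota$ lie in the domain of the Haar measure $\mu$.

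I expect no real obstacle. The only point needing care is that the set defined by the formula is exactly the projection described: $g\in G_\eta$ if and only if there is $\boldsymbol{h}$ with $w(g,\boldsymbol{h})=1$, i.e. $(g,\boldsymbol{h})\in w^{-1}(1)$, and similarly for $G_\iota$.
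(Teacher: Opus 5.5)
Your proposal is correct and follows the paper's proof: both write $G_\eta$ as the projection of the closed set $w^{-1}(1)$ and $G_\iota$ as the projection of its open complement, then use that the projection is open and, by compactness, closed. Your compact-image argument simply spells out why the projection preserves closedness, which the paper asserts without justification.
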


\begin{proof}
    Let $w$ be the word in $k+1$ variables associated to the equation $\eta$. Then $w$ is a continuous map 
    $$w\colon G\times G^k\rightarrow G.$$
    It follows that $S:=\lbrace (x,\boldsymbol{y})\mid w(x,\boldsymbol{y})=1\rbrace$ is closed in $G$. Now the projection onto the first factor $\pi_G\colon G\times G^k\rightarrow G$ that sends $(x,\boldsymbol{y})$ to $x$ is an open and closed map. It follows that $G_\eta=\pi_G(S)$ is closed. 

    Similarly, for any inequality $\iota$, the set $S':=\lbrace (x,\boldsymbol{y})\mid w(x,\boldsymbol{y})\neq 1\rbrace$ is open in $G$. Therefore $G_\iota=\pi_G(S')$ is open.
\end{proof}

Recall from the Introduction that, if $\mathcal{F}$ is a family of formulae (e.g.~the family of existential formulae), and $G$ is a profinite group endowed with the Haar measure $\mu=\mu_G$, we write
$$\mathrm{Spec}_\mu(\mathcal{F}, G) = \{ \mu(G_\varphi) \mid \varphi\in \mathcal{F} \}$$ and we call this set the \emph{$\mathcal{F}$-spectrum of $G$}; this is the set of all real numbers between 0 and 1 that can occur as Haar measure of sets defined by the formulae $\varphi\in\mathcal{F}$ in $G$.
More generally, we denote by $\mathrm{Spec}_\mu(\mathcal{F}, \mathscr{C})$ the collection of possible measures that sets defined by formulae in $\mathcal{F}$ can have in groups belonging to a certain class $\mathscr{C}$.

\section{Words with small image in profinite groups}

In this section we deal with a particular type of existential formulae: images of word maps. The results of this section will be used in Section~\ref{sec: existential}.

\subsection{Words with image of measure 0 in free pro-\texorpdfstring{$p$}{p} groups}

In this section we prove Theorem \ref{prop:A}. We start by recalling a result from \cite{JZ}.

For any pro-$p$ group $G$, let $D_n(G)$ be the $n$-th \emph{dimension subgroup} of $G$ defined in the following way (see \cite[Chapter~11]{DdSMS99}): $D_1(G):= G$ and, for every $n>1$, $$D_n(G):= D_{\lceil\frac{n}{p}\rceil}^p \prod_{i+j=n}{[D_i, D_j]}.$$

It follows from the definition that $[D_i(G), D_j(G)]\leq D_{i+j}(G)$ for all positive integers $i,j$, and that $D_{i}^p(G)\leq D_{pi}(G)$. In particular, if $G$ is finitely generated, for every $n$ one has 
\begin{equation}
\label{eq: containment frattini dim subgroup}
\Phi(D_n(G))=[D_n(G), D_n(G)]D_n^p(G)\leq D_{2n}(G)D_{np}(G)\leq D_{n+1}(G).
\end{equation}

Now let $H_d$ be the free pro-$p$ group of rank $d$. For every positive integer $n$ we define the following
numbers:
\[
  a_n := \log_p \lvert D_n(H_d) : D_{n+1}(H_d)\rvert, \qquad b_n := \log_p \lvert H_d : D_{n+1}(H_d)\rvert.
\]

\begin{lemma}[{\cite[Lemma~4.3]{JZ}}]
 When $n$ tends to infinity the following holds:
$$a_n = \frac{d^n}{n} (1 + o(1)) \qquad \text{and} \qquad b_n = \frac{d^{n+1}}{
(d-1)n} (1 + o(1)).$$
\end{lemma}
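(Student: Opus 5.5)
The plan is to identify the graded object attached to the dimension subgroups of $H_d$ with a free restricted Lie algebra, read off $a_n$ exactly from Witt-type formulae, and then estimate. Concretely, I would use the standard fact (Lazard/Jennings, cf.\ \cite[Chapter~11--12]{DdSMS99}) that for any pro-$p$ group $G$ the direct sum $L(G)=\bigoplus_{n\ge 1} D_n(G)/D_{n+1}(G)$ is a graded restricted Lie algebra over $\mathbb{F}_p$. Here the bracket is induced by commutators and the $p$-map by $p$-th powers, in line with the containments $[D_i,D_j]\le D_{i+j}$ and $D_i^p\le D_{pi}$ recalled above. Moreover, $L(G)$ embeds in the graded algebra $\mathrm{gr}\,\mathbb{F}_p[[G]]$ associated with the augmentation ideal filtration, and generates it. For $G=H_d$ the completed group algebra $\mathbb{F}_p[[H_d]]$ is the ring of non-commutative power series in $X_i=x_i-1$, so $\mathrm{gr}\,\mathbb{F}_p[[H_d]]\cong \mathbb{F}_p\langle X_1,\dots,X_d\rangle$, the free associative algebra. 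Since this algebra is the restricted enveloping algebra of the free restricted Lie algebra, $L(H_d)$ is the free restricted Lie algebra on $d$ generators of degree $1$.

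From this, $a_n=\dim_{\mathbb{F}_p} L_n(H_d)$ can be computed in two equivalent ways. The first is Jennings' identity together with the restricted PBW theorem:
\[
\prod_{n\ge1}\Bigl(\frac{1-t^{pn}}{1-t^{n}}\Bigr)^{a_n}=\frac{1}{1-dt}.
\]
The second, which I would actually use, is that the free restricted Lie algebra in degree $n$ is spanned by the $p^j$-th powers of a basis of the free Lie algebra in degree $n/p^j$. This gives
\[
a_n=\sum_{j\ge0,\;p^j\mid n} M_d(n/p^j),\qquad M_d(m)=\frac1m\sum_{e\mid m}\mu(e)\,d^{m/e},
\]
where $M_d(m)$ is Witt's necklace number.

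The asymptotics then follow by elementary estimates. First, $M_d(n)=d^n/n+O(d^{n/2})$, because all the terms with $e\ge2$ contribute at most $n\,d^{n/2}$. The remaining terms with $j\ge1$ contribute at most $\sum_{m\le n/p} d^m=O(d^{n/p})\le O(d^{n/2})$. Since $d\ge2$ we have $d^{n/2}=o(d^n/n)$, and therefore $a_n=\frac{d^n}{n}(1+o(1))$.

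For $b_n$, I would use $b_n=\sum_{i=1}^n a_i$, which holds because $D_1(H_d)=H_d$. I would split the sum at $i=n-\sqrt n$, say. The tail $\sum_{n-\sqrt n<i\le n} \frac{d^i}{i}(1+o(1))$ equals $\frac{d^n}{n}(1+o(1))\sum_{k\ge0}d^{-k}$, because $i/n\to1$ uniformly on this range. The head is bounded by $d^{n-\sqrt n+1}=o(d^n/n)$. Since $\sum_{k\ge0}d^{-k}=\frac{d}{d-1}$, this gives $b_n=\frac{d^{n+1}}{(d-1)n}(1+o(1))$.

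The main obstacle is the structural step, namely identifying $L(H_d)$ with the free restricted Lie algebra. This rests on the Jennings--Lazard theorem that $D_n(G)=\{g: g-1\in \Delta^n\}$, where $\Delta$ is the augmentation ideal of $\mathbb{F}_p[[G]]$, and on the freeness of $\mathbb{F}_p[[H_d]]$. I would quote both from \cite{DdSMS99} rather than reprove them. Once this identification is in place, everything else is a counting argument.
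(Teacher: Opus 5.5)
The paper gives no proof of this lemma. It is quoted from \cite[Lemma~4.3]{JZ} and used only through the consequence $b_{n-1}/b_n\to 1/d$, which yields the bound $k/d$ on the lower box dimension in the proof of Theorem~\ref{prop:A}.

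Your proposal is a correct, self-contained derivation along the standard lines. Identifying $\bigoplus_n D_n(H_d)/D_{n+1}(H_d)$ with the free restricted Lie algebra gives the exact count $a_n=\sum_{p^j\mid n}M_d(n/p^j)$. Your three estimates are sound: $M_d(n)=d^n/n+O(d^{n/2})$, $\sum_{j\ge 1}M_d(n/p^j)=O(d^{n/p})$, and the split of $b_n=\sum_{i\le n}a_i$ at $n-\sqrt n$. For the head of that split, the cleanest justification is $a_i\le d^i$, since $D_i/D_{i+1}$ embeds in $\Delta^i/\Delta^{i+1}$, which has dimension $d^i$. Compared with the bare citation, your route gives an exact formula and an explicit error term, which is more than Theorem~\ref{prop:A} needs.

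One step deserves a sentence more than you give it. Knowing $\mathrm{gr}\,\mathbb{F}_p[[H_d]]\cong\mathbb{F}_p\langle X_1,\dots,X_d\rangle$ and that $L(H_d)$ embeds in it and generates it does not by itself force $L(H_d)$ to be the free restricted Lie algebra. A larger restricted Lie subalgebra would also generate. You additionally need that $L(H_d)$ is generated in degree $1$ as a restricted Lie algebra, which is immediate from the recursive definition of $D_n$. Then its image is exactly the restricted Lie subalgebra generated by the $X_i$. Alternatively, the Jennings identity you write down determines the $a_n$ recursively, and your formula satisfies it by telescoping against Witt's identity $\prod_{m}(1-t^m)^{-M_d(m)}=(1-dt)^{-1}$.
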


We will also need the following well-known result, of which we include the proof for completeness.

\begin{lemma}
    \label{lem: set of lower box dim <1 has haar measure 0}
    Let $G$ be a pro-$p$ group with Haar measure $\mu_G$ and let $S$ be a closed subset of $G$ whose lower box dimension with respect to some filtration $\mathcal{G}=(G_n)_{n\in \mathbb N}$ of $G$ is smaller than 1; that is 
    \[
    \underline{\dim}_{\, \mathcal{G}}(S) := \liminf_{n\to \infty} \frac{\log_p \lvert S G_{n} : G_{n} \rvert}{ \log_p \lvert G:G_{n}  \rvert}<1.
    \]
    Then $\mu_G(S)=0$.
\end{lemma}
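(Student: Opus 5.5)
We prove that $\mu_G(S)=0$ by reading the measure off the finite quotients $G/G_n$. Here the filtration $\mathcal{G}=(G_n)_{n\in\mathbb N}$ is understood, as usual, to be a descending chain of open normal subgroups of $G$ with trivial intersection; in particular it forms a base of neighbourhoods of the identity. Since $S$ is closed, $S=\bigcap_n SG_n$. The sets $SG_n$ are closed (unions of finitely many cosets of $G_n$) and decreasing, so continuity of the measure from above gives
\[
\mu_G(S)=\lim_{n\to\infty}\mu_G(SG_n)=\lim_{n\to\infty}\frac{\lvert SG_n:G_n\rvert}{\lvert G:G_n\rvert}.
\]
Alternatively, one can use the formula \eqref{eq: measure r.f. group}: it expresses $\mu_G(S)$ as an infimum over all open normal subgroups, so it is at most the infimum over the $G_n$. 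Either way, the statement reduces to showing that this ratio, or at least some subsequence of it, tends to $0$.

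Write $s_n:=\log_p\lvert SG_n:G_n\rvert$ and $t_n:=\log_p\lvert G:G_n\rvert$. By hypothesis $\liminf_n s_n/t_n=\delta<1$. Choose $\delta'$ with $\delta<\delta'<1$. Then there are infinitely many $n$ with $s_n\le \delta' t_n$, and along these $n$,
\[
\frac{\lvert SG_n:G_n\rvert}{\lvert G:G_n\rvert}=p^{\,s_n-t_n}\le p^{-(1-\delta')t_n}.
\]

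It remains to check that $t_n\to\infty$, which is the only point needing care. If $G$ is infinite, the $G_n$ are open with trivial intersection, so the indices $\lvert G:G_n\rvert$ must be unbounded. Since the chain is decreasing, they are nondecreasing, and hence $t_n\to\infty$. Therefore $p^{-(1-\delta')t_n}\to 0$ along the chosen subsequence, and $\mu_G(S)\le\inf_n \mu_G(SG_n)=0$.

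The degenerate finite case must be excluded or noted separately. If $G$ is finite, $t_n$ eventually stabilises and the ratio becomes $s_n/t_n$ with $SG_n=S$ eventually; then $s_n/t_n<1$ simply says $S\neq G$, and the conclusion can fail. So the lemma is understood for infinite $G$, in particular for the free pro-$p$ groups $H_d$ with the dimension subgroup filtration, where $b_n\to\infty$ by the preceding lemma of \cite{JZ}.
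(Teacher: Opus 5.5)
Your proof is correct and follows essentially the same route as the paper. Both pass to a subsequence on which $\log_p\lvert SG_n:G_n\rvert\le\delta'\log_p\lvert G:G_n\rvert$ with $\delta'<1$, bound the ratio by $\lvert G:G_n\rvert^{\delta'-1}$, and conclude via $\mu_G(S)\le\inf_n\mu_G(SG_n)$, which the paper obtains from \eqref{eq: measure r.f. group}. Your explicit check that $\lvert G:G_n\rvert\to\infty$, and your observation that the statement fails for finite $G$, fill in a point the paper leaves implicit.
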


\begin{proof}
Let $\mathcal{G}=(G_n)_{n\in \mathbb N}$ be a filtration of $G$ such that $\underline{\dim}_{\, \mathcal{G}}(S)<1$.
Let $\varepsilon >0$ such that $d:= \underline{\dim}_{\, \mathcal{G}}(S)+\varepsilon<1$.
Then there exist a subsequence $(G_{n_m})_{m\in \mathbb N}$ of $\mathcal{G}$ and a natural number $m_0$ such that, for every $m\ge m_0$,
$$\log_p \lvert S G_{n_m} : G_{n_m} \rvert \leq d\log_p \lvert G:G_{n_m}  \rvert.$$

Therefore, for every $m\geq m_0$ we have

$$\frac{\lvert S G_{n_m} : G_{n_m} \rvert}{\lvert G : G_{n_m} \rvert}\leq \frac{\lvert G : G_{n_m} \rvert^d}{\lvert G : G_{n_m} \rvert}= \lvert G : G_{n_m} \rvert^{d-1}.$$

Since the right hand side tends to $0$ when $m\rightarrow \infty$, we conclude that 
$$\lim_{m\to\infty}{\frac{\lvert S G_{n_m} : G_{n_m} \rvert}{\lvert S:G_{n_m}  \rvert}}=0.$$
Then the result follows from (\ref{eq: measure r.f. group}) and the inequality
    $$\mu(S) = \inf_{N\lhd_o G}\frac{\vert SN/N \vert}{\vert G/N\vert }\le \lim_{m\to\infty}{\frac{\lvert S G_{n_m} : G_{n_m} \rvert}{\lvert S:G_{n_m}  \rvert}}. $$
\end{proof}

We can now prove the main result of this section.

\begin{proof}[Proof of Theorem~\ref{prop:A}]
We start by observing that, by part \eqref{item: lemma non-surj non-surj word is id in elem ab p-gps} of Lemma~\ref{lem:non-surj} and the containment \eqref{eq: containment frattini dim subgroup}, for any non-surjective word $w$ we have that $$w(D_n(H_d)) \subseteq D_{n+1}(H_d).$$ Therefore, since $[H_d, D_{n}(H_d)]\leq D_{n+1}(H_d)$, 
for all tuples $(\overline{g}_1,\ldots,\overline{g}_k)$ of elements belonging to $H_d/D_{n+1}(H_d)$ and all tuples $(\overline{\alpha}_1,\ldots,\overline{\alpha}_k)$ of elements in $D_n(H_d)/D_{n+1}(H_d)$, we have that
\[
   w(\overline{g_1 \alpha_1},\ldots, \overline{g_k \alpha_k}) = w(\overline{g}_1,\ldots, \overline{g}_k). 
\]
From this we conclude that 
\begin{align*}
  \log_p\lvert (H_d/D_{n+1}(H_d))^{\{w+\}} \rvert &\le \log_p\lvert H_d/D_n(H_d) \rvert^k = k\cdot b_{n-1}  \\ &= k\cdot \frac{d^n}{(d-1)(n-1)} (1+o(1)).  
\end{align*}
  
Hence, considering the lower box dimension $\underline{\dim}_{\mathcal{D}}$ with respect to the filtration $\mathcal{D}=(D_n(H_d))_{n\in \mathbb N}$ we obtain
\[
   \underline{\dim}_\mathcal{D}(H_d^{\{w+\}}) = \liminf_{n\to \infty} \frac{\log_p \lvert H_d^{\{w+\}} D_{n}(H_d) : D_{n}(H_d) \rvert}{ \log_p \lvert H_d:D_{n}(H_d)  \rvert} \le \frac{k}{d}.
\]
Then the result follows from Lemma \ref{lem: set of lower box dim <1 has haar measure 0}.
\end{proof}

\begin{remark}
\label{rmk: verbal sets have positive mesure in abelian groups}
    As we will examine more thoroughly in Section \ref{sec: measures in fg abelian groups}, where we consider the measures of definable sets of abelian groups, there is no analogous result to the previous Theorem \ref{prop:A} when $d=1$. Indeed, let $w(x_1,\ldots,x_k)$ be any non-trivial word and let $G$ be a finitely generated abelian group, say with minimal number of generators $d$. When evaluated in $G$, using additive notation, the previous word can be rewritten as $w(g_1,\ldots, g_k)=a_1g_1+\cdots+ a_kg_k$ for some integers $a_1,\ldots,a_k$. Without loss of generality, assume that $a:=a_1\neq 0$. Then $aG\subseteq G^{\lbrace w+\rbrace}$. From the fact that $aG$ is a subgroup of $G$ of index $\frac{1}{d^{a}}$, we obtain that $\mu(G^{\lbrace w+\rbrace})>0$.  
\end{remark}

Given any set $X$ and positive integer $m$, denote by $X^{\ast m}$ the set 
$$X^{\ast m}:=\lbrace x_1\cdots x_m\mid x_i\in X\ \text{for all}\ i=1,\ldots,m \rbrace.$$
From Theorem~\ref{prop:A} and Corollary \ref{rmk: concatenation of word of finite width has positive measure} we deduce the following.

\begin{corollary}
\label{cor: in free pro-p groups there is a set with measure 0 that generates open subgroup= verbal set of concatenation}
Let $H_d$ be the free pro-$p$ group of rank $d$. Then there exists a subset $X\subset H_d$ with $X^{-1}=X$, $\mu(X)=0$ and $0<\mu(X^{\ast m})<1$ for some natural number $m$. 
\end{corollary}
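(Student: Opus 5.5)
The natural choice is $X := H_d^{\{w\}}$ for the word $w(x,y,z)=x^p[y,z]$. By definition $H_d^{\{w\}}$ consists of the $w$-values and their inverses, so $X^{-1}=X$ automatically. The statement does not fix $d$, but Theorem~\ref{prop:A} with $k=3$ needs $d\ge 4$, so I take $d\ge 4$ there. (If smaller ranks are intended, one can instead use $w=x^p$, with $k=1$ and $d\ge 2$.)

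First, $w$ is not surjective on $H_d$: all its exponent sums are divisible by $p$, so by Lemma~\ref{lem:non-surj} its image lies in $\Phi(H_d)$. Theorem~\ref{prop:A} then gives $\mu(H_d^{\{w+\}})=0$. Inversion is a measure-preserving homeomorphism of $H_d$, so $\mu(X)\le 2\mu(H_d^{\{w+\}})=0$.

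Next, by the Example following Corollary~\ref{rmk: concatenation of word of finite width has positive measure}, $w$ has width $d$ in $H_d$. For this word every sign choice $\boldsymbol{\varepsilon}$ gives the same image, and $H_d^{\{w^{\ast d}\}}=\Phi(H_d)$. The key check is that $X^{\ast d}$ equals this image. The inclusion $X^{\ast d}\supseteq H_d^{\{w^{\ast d}\}}$ is immediate. For the reverse, each element of $X$ lies in the subgroup $\Phi(H_d)$, so any product of $d$ elements of $X$ lies in $\Phi(H_d)$. Hence $X^{\ast d}=\Phi(H_d)$.

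Finally, $\Phi(H_d)$ is open of index $p^d$, so $\mu(X^{\ast d})=p^{-d}$, which lies strictly between $0$ and $1$. Taking $m=d$ completes the argument. There is no real obstacle here; the only care needed is the identification $X^{\ast d}=\Phi(H_d)$, which follows from the Example together with Lemma~\ref{lem:non-surj}.
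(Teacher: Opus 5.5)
For $d\ge 4$ your argument is correct, and it is more elementary than the paper's. The paper takes $X=H_d^{\{x^p\}}$, so it needs both that $x^p$ has finite width in $H_d$ and that $H_d^p$ is open, which is the restricted Burnside problem. Your $X=H_d^{\{x^p[y,z]\}}$ instead gives $X^{\ast d}=\Phi(H_d)$ directly.

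\textbf{The gap is the ranks $d\in\{2,3\}$.} The statement covers these ranks: the paper applies Theorem~\ref{prop:A} with $k=1$, so it only needs $d\ge 2$. Your parenthetical switch to $w=x^p$ settles only $\mu(X)=0$. The substantive half, that $\mu(X^{\ast m})>0$ for some $m$, is not argued. Your identification of $X^{\ast m}$ with $\Phi(H_d)$ does not transfer either: products of $p$-th powers only fill the verbal subgroup $H_d^p$. For odd $p$ this is a proper subgroup of $\Phi(H_d)$, since $H_d$ maps onto non-abelian groups of exponent $p$. To finish you would have to prove that $x^p$ has finite width in $H_d$ and that $H_d^p$ has finite index, i.e.\ invoke the positive solution of the restricted Burnside problem for exponent $p$, exactly as the paper does. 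This is not a formality. Since $X$ is symmetric, $\mu(X^{\ast m})>0$ would by the Steinhaus--Weil theorem make $X^{\ast 2m}$ contain an open neighbourhood of $1$, so it would already imply that $H_d^p$ is open.

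A repair that stays within your framework and works for every $d\ge 2$ is to enlarge $X$. Let $X=H_d^{\{x^p\}}\cup H_d^{\{[y,z]\}}$.
\begin{itemize}
\item The first set is null by Theorem~\ref{prop:A} with $k=1$.
\item The second lies in the closed subgroup $[H_d,H_d]$, which has infinite index because $H_d/[H_d,H_d]\cong\mathbb{Z}_p^d$, so it is null too.
\item Hence $\mu(X)=0$, and $X^{-1}=X$ by definition.
\item Every element of $X$ lies in $\Phi(H_d)$.
\item By the Example on $x^p[y,z]$, every element of $\Phi(H_d)$ is a product $\prod_{i=1}^d x_i^p[y_i,z_i]$, so it lies in $X^{\ast 2d}$.
\end{itemize}
Thus $X^{\ast 2d}=\Phi(H_d)$ has measure $p^{-d}\in(0,1)$, and no Burnside-type input is needed.
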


\begin{proof}
Take $X=H_d^{\{x^p\}}$. According to Theorem \ref{prop:A}, the measure of the image of $w(x)=x^p$ is zero.
        However, $w$ has finite width in $H_d$ and, together with the positive solution of the restricted Burnside problem, one gets that the verbal subgroup that $w$ generates is open (see \cite[Corollary 8.3]{W}).
\end{proof}

In view of the previous results, it is natural to ask the two following questions regarding the growth of the measure of verbal sets.

\begin{question}\label{quest:small_d}
    Let $w$ be a non-surjective word in $k$ variables. Suppose that there is a positive integer $d_0$ such that $\mu(H_{d_0}^{\lbrace w+\rbrace})>0$. We know that for any $d\geq k+1$ one has $\mu(H_{d}^{\lbrace w+\rbrace})=0$ (and hence $d_0\leq k$). What is the measure $\mu(H_{d}^{\lbrace w+\rbrace})$ for $d_0<d\leq k$?
\end{question}
For some considerations related to Question~\ref{quest:small_d} see the Appendix~\ref{sec: family of p-gps where x^p has positive measure}.
\begin{example}
For example, we know that, for $$w(x,y_1,y_2,u_1,u_2)=x^p[y_1,y_2][u_1,u_2],$$ $\mu(H_{2}^{\lbrace w+\rbrace})=\frac{1}{p^2}$ and $\mu(H_{d}^{\lbrace w+\rbrace})=0$ for every $d\geq 6$. Then the previous question asks  what the measure of the image of $w$ is in $H_d$ for $3\leq d<6$.
\end{example}

A related question is the following.

\begin{question}
    Let $H_d$ be a fixed free pro-$p$ group of rank $d$.
    Let $w$ be a non-surjective word in $k$ variables. 
    For any $m\geq 1$ consider the concatenations $w^{\ast m,\boldsymbol{\varepsilon}}$ of $w$. How does the measure of $H_d^{\lbrace w^{\ast m,\boldsymbol{\varepsilon}}+\rbrace}$ change with $m$?
\end{question}

\begin{example}
For instance, we know that, if $w(x)=x^p$, then $\mu(H_d^{\lbrace w+\rbrace})=0$ for any $d\geq 2$. However, for any $d$ the word $w$ has finite width $m_d$ in $H_d$, and its verbal subgroup is open. Therefore, with $d$ fixed, the image of the word $w^{\ast m_d}$ has positive measure in $H_d$. How does this measure change with growing $m$ such that $mk>d-1$?
\end{example}

From Theorem \ref{prop:A} we also obtain a lower bound on the width of a word $w$ in a free pro-$p$ group $H_{d}$, which depends on the rank $d$ and the number of variables of $w$.

\begin{corollary}
    Let $w$ be a word in $k$ variables that has finite width $m_{w,d}$ in the free pro-$p$ group $H_d$ of rank $d$, and assume that the verbal subgroup $w(H_d)$ is open in $H_d$. Then $m_{w,d}> \frac{d-1}{k}$. 
\end{corollary}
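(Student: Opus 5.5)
The plan is to argue by contradiction, combining Lemma \ref{lem: if finite width, verbal subgroup = union of verbal sets} with Theorem \ref{prop:A}. Write $m=m_{w,d}$ and suppose $m\le \frac{d-1}{k}$, that is, $km\le d-1$, or equivalently $d\ge km+1$.

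\textbf{Step 1: $w$ is not surjective on $H_d$.} If $w$ were surjective, then $w(H_d)=H_d$, and every element would be a single $w$-value. So the hypothesis $m\le\frac{d-1}{k}$ only has content in the non-surjective case. For a surjective word the width is $1$, and the statement $1>\frac{d-1}{k}$ can fail. I will therefore assume that the intended setting is that of non-surjective words, with $w(H_d)$ open (for example $w=x^p$). In that case Lemma \ref{lem:non-surj}(\ref{item: im non-surj is in frattini}) gives $w(H_d)\le\Phi(H_d)$, and $w$ is non-surjective.

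\textbf{Step 2: the concatenations are non-surjective.} Each concatenation $w^{\ast m,\boldsymbol{\varepsilon}}$ is a word in $km$ disjoint variables. Its image is contained in $w(H_d)\le\Phi(H_d)$, which is a proper subgroup, so each $w^{\ast m,\boldsymbol{\varepsilon}}$ is also non-surjective on $H_d$.

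\textbf{Step 3: apply Theorem \ref{prop:A}.} Since $d\ge km+1$, Theorem \ref{prop:A} applies to each of these words, so every image $H_d^{\{w^{\ast m,\boldsymbol{\varepsilon}}+\}}$ has measure zero.

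\textbf{Step 4: reach the contradiction.} By Lemma \ref{lem: if finite width, verbal subgroup = union of verbal sets}, $w(H_d)$ is the union of the $2^m$ sets $H_d^{\{w^{\ast m,\boldsymbol{\varepsilon}}\}}$. Each of these is a union of the image of $w^{\ast m,\boldsymbol{\varepsilon}}$ and its set of inverses. Inversion preserves Haar measure, so each of these sets also has measure zero (this is the remark $\mu(G^{\{w\}})=0\iff\mu(G^{\{w+\}})=0$). A finite union of null sets is null, hence $\mu(w(H_d))=0$. But $w(H_d)$ is open, so it has measure $1/[H_d:w(H_d)]>0$. This contradiction shows $km>d-1$.

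The main obstacle is Step 1, namely settling the surjective case, which must be excluded by the non-surjectivity hypothesis implicit in the surrounding discussion. Once that is settled, the remaining steps follow directly from the cited results.
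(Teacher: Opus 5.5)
Your argument is the paper's: assuming $km\le d-1$, Theorem~\ref{prop:A} gives measure zero to each of the $2^m$ concatenations $w^{\ast m,\boldsymbol{\varepsilon}}$ in $km$ variables. This contradicts the fact that their union is the open subgroup $w(H_d)$, which the paper phrases via Corollary~\ref{rmk: concatenation of word of finite width has positive measure}.

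Your Step~1 caveat is correct and worth keeping. Theorem~\ref{prop:A} requires $w$ to be non-surjective, and the paper uses this silently. Without that hypothesis the statement fails: for example, $w=x$ with $k=1$ and $d\ge 2$ has width $1\le d-1=\frac{d-1}{k}$.
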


\begin{proof}
    Let $m:=m_{w,d}$.
    By Theorem \ref{prop:A}, if $d\geq mk+1$, then the image of each of the words in $mk$ variables $w^{\ast m, (\pm_1,\ldots,\pm_m)}$ has measure $0$ in $H_d$. Since by Corollary \ref{rmk: concatenation of word of finite width has positive measure} the measure of at least one of the images of these words is positive, we must have $m > \frac{d-1}{k}$.
\end{proof}

For instance, for the word $w(x,y,z)=x^p[y,z]$ we get $m_{w,d}>\frac{d-1}{3}$, and indeed it is well known that $m_{w,d}=d$. For the word $x^p$ we obtain the following.
\begin{corollary}
  The word $w(x)=x^p$ has width $m_{w,d}> d-1$ in the free pro-$p$ group $H_d$ of rank $d$.  
\end{corollary}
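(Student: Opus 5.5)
This follows from the preceding corollary with $k=1$, once we check its hypotheses for $w(x)=x^p$. The main work is to verify that $x^p$ has finite width in $H_d$ and that the verbal subgroup $H_d^p=w(H_d)$ is open.

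Both facts are the ones already invoked in the proof of Corollary~\ref{cor: in free pro-p groups there is a set with measure 0 that generates open subgroup= verbal set of concatenation}. Since $H_d$ is finitely generated, the quotient $H_d/\overline{H_d^p}$ is a finitely generated pro-$p$ group of exponent $p$. By the positive solution of the restricted Burnside problem it is finite, so the closed verbal subgroup has finite index and is open. Moreover, $x^p$ has finite width in the finitely generated pro-$p$ group $H_d$ (\cite[Corollary 8.3]{W}). Finite width makes the abstract verbal subgroup $w(H_d)$ closed by \cite[Proposition 4.1.2]{S}, so it coincides with $\overline{H_d^p}$ and is open.

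Having checked the hypotheses, the preceding corollary with $k=1$ gives $m_{w,d}>(d-1)/1=d-1$.

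If I instead wanted to argue directly, I would reproduce that corollary's mechanism. Suppose $m:=m_{w,d}\le d-1$. The word $x^p$ is not surjective on $H_d$, because its image lies in $\Phi(H_d)$. The same holds for each concatenation $w^{\ast m,\boldsymbol{\varepsilon}}=x_1^{\pm p}\cdots x_m^{\pm p}$, whose exponent sums are all divisible by $p$ (Lemma~\ref{lem:non-surj}). Each such concatenation is a word in $m\le d-1$ variables, so Theorem~\ref{prop:A} shows its image has measure zero. But Corollary~\ref{rmk: concatenation of word of finite width has positive measure} (via Lemma~\ref{lem: if finite width, verbal subgroup = union of verbal sets}) says the union of these finitely many images is the open subgroup $w(H_d)$, which has positive measure. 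This contradiction gives $m_{w,d}\ge d$, i.e.\ $m_{w,d}>d-1$.

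The only step needing care is confirming that the hypotheses of the previous corollary hold, which rests on the cited restricted Burnside input. Everything else is immediate.
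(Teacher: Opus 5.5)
Your proposal is correct and follows the paper's route: specialise the preceding width corollary to $k=1$, with finite width of $x^p$ and openness of $w(H_d)$ coming from the restricted Burnside argument already used for the set $X=H_d^{\{x^p\}}$. Your direct unpacking is slightly more careful than the paper: you explicitly check that each concatenation $x_1^{\pm p}\cdots x_m^{\pm p}$ is non-surjective before invoking Theorem~\ref{prop:A}. The preceding corollary needs this hypothesis but does not state it, and as stated it fails for $w=x$, which has width $1$.
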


    \subsection{Words with image of measure 0 in free profinite groups}

In this section we establish an analogous result to Theorem \ref{prop:A} for free profinite groups.

    From Theorem \ref{prop:A} and Corollary \ref{cor: measure grows with quotient} we immediately obtain:

    \begin{corollary}
    \label{cor: prop A for free profinite}
    Let $d\ge 2$ and let $\widehat{F}_d$ be the free profinite group of rank $d$.
    Suppose that $w$ is a word in $k$ variables and that $p$ is a prime such that $w$ is not surjective in the free pro-$p$ group $H_{p,d}$. Then $w$ has measure zero in $\widehat{F}_d$ whenever $k+1\le d$.
    \end{corollary}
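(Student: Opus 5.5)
The plan is to transfer Theorem~\ref{prop:A} along the canonical epimorphism from $\widehat{F}_d$ onto its maximal pro-$p$ quotient, and then apply Corollary~\ref{cor: measure grows with quotient}.

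\textbf{Setup.} Fix a prime $p$ as in the statement. Since $H_{p,d}$ is the maximal pro-$p$ quotient of $\widehat{F}_d$, there is a continuous epimorphism $\pi\colon \widehat{F}_d\to H_{p,d}$ sending a free basis of $\widehat{F}_d$ to a free basis of $H_{p,d}$.

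\textbf{Step 1: the image of $w$ maps into the image of $w$.} Words commute with homomorphisms, so $\pi(w(\boldsymbol{h}))=w(\pi(\boldsymbol{h}))$ for every $\boldsymbol{h}\in\widehat{F}_d^k$. Hence
\[
\pi\bigl(\widehat{F}_d^{\{w+\}}\bigr)\subseteq H_{p,d}^{\{w+\}}.
\]
Equality in fact holds, because $\pi^k$ is surjective on $k$-tuples, though the inclusion is all we need.

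\textbf{Step 2: measurability.} The set $\widehat{F}_d^{\{w+\}}$ is the image of the compact space $\widehat{F}_d^k$ under the continuous word map, so it is closed. By the same argument $H_{p,d}^{\{w+\}}$ is closed, and so is its preimage under $\pi$. All sets involved are therefore measurable.

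\textbf{Step 3: measure zero.} Since $w$ is not surjective on $H_{p,d}$ and $d\ge k+1$, Theorem~\ref{prop:A} gives $\mu_{H_{p,d}}(H_{p,d}^{\{w+\}})=0$. By Step 1,
\[
\widehat{F}_d^{\{w+\}}\subseteq \pi^{-1}\bigl(H_{p,d}^{\{w+\}}\bigr).
\]
Proposition~\ref{prop: quotient measure of a set} gives that this preimage has measure $0$ in $\widehat{F}_d$. Monotonicity of the Haar measure then yields $\mu(\widehat{F}_d^{\{w+\}})=0$; this is exactly Corollary~\ref{cor: measure grows with quotient}. Finally, $\mu(\widehat{F}_d^{\{w\}})=0$ follows as well, since $\widehat{F}_d^{\{w\}}=\widehat{F}_d^{\{w+\}}\cup(\widehat{F}_d^{\{w+\}})^{-1}$ and inversion preserves Haar measure.

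\textbf{Main point to check.} The only step needing care is Step 1, namely the inclusion $\pi(\widehat{F}_d^{\{w+\}})\subseteq H_{p,d}^{\{w+\}}$ together with measurability. Once these are in place, everything is a direct application of results already stated in the paper.
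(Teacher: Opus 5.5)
Your proposal is correct and is the paper's argument: the corollary is stated as an immediate consequence of Theorem~\ref{prop:A} and Corollary~\ref{cor: measure grows with quotient}, applied to the projection $\widehat{F}_d\to H_{p,d}$. Your Steps 1 and 2 (the inclusion $\pi(\widehat{F}_d^{\{w+\}})\subseteq H_{p,d}^{\{w+\}}$ and closedness of the images) spell out details the paper leaves implicit.
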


    \begin{remark}
        We will see later in Corollary \ref{cor: surjective words} and Lemma \ref{lem: surjectivity conditions for words in free pro-p} that, if a word $w$ is not surjective on a free non-abelian profinite group of finite rank, then there exists a prime $p$ such that $w$ is not surjective on $H_{p,d}$.
         Therefore, the condition on the non-surjectivity of $w$ in Corollary \ref{cor: prop A for free profinite} is always verified if $w$ is not surjective on $\widehat{F}_d$. 
    \end{remark}

    As a consequence of the previous corollary we get the following.

    \begin{proposition}
    \label{prop: x^m has measure zero in free profinite}
        Let $m$ be any positive integer. Then the image of the word $w(x)=x^m$ has measure zero in any non-abelian free profinite group.
    \end{proposition}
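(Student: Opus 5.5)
Here $w(x)=x^m$ has $k=1$ variable, and a non-abelian free profinite group $\widehat{F}_d$ has rank $d\ge 2=k+1$. The plan is to apply Corollary \ref{cor: prop A for free profinite}, so the only thing to check is its hypothesis: there must be a prime $p$ for which $x^m$ is not surjective on the free pro-$p$ group $H_{p,d}$.

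\textbf{Case $m>1$.} Choose a prime $p$ dividing $m$. By Lemma \ref{lem:non-surj}(1), $x^m$ is surjective on $H_{p,d}$ if and only if it is surjective on the Frattini quotient $H_{p,d}/\Phi(H_{p,d})\cong(\mathbb{Z}/p)^d$. On that quotient the map $x\mapsto x^m$ is identically trivial, because $p\mid m$. Since $d\ge 1$, the quotient is non-trivial, so the map is not surjective there, hence not surjective on $H_{p,d}$. Corollary \ref{cor: prop A for free profinite} then gives $\mu(\widehat{F}_d^{\{x^m+\}})=0$. By the remark in the introduction (\cite[Chapter~21, Exercise~2]{FJ}), this also gives $\mu(\widehat{F}_d^{\{x^m\}})=0$.

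\textbf{Case $m=1$.} Here $x^1$ is the identity map, whose image is all of $\widehat{F}_d$, of measure $1$. So the statement is only correct under the implicit assumption $m\ge 2$, and I would state that restriction explicitly rather than try to prove the $m=1$ case.

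\textbf{Infinite rank.} If "non-abelian free profinite group" is meant to include infinite rank, Corollary \ref{cor: prop A for free profinite} does not apply directly, since it is stated for finite $d$. I would handle this through a quotient. Take any epimorphism $\pi$ from the free profinite group onto $\widehat{F}_2$; it sends $x^m$-values onto $x^m$-values, so $\pi(F^{\{x^m+\}})=\widehat{F}_2^{\{x^m+\}}$, which has measure zero by the finite-rank case. This image is closed, hence measurable, so Corollary \ref{cor: measure grows with quotient} gives measure zero upstairs. I expect no step here to be hard; the only real content is choosing $p\mid m$ so that Lemma \ref{lem:non-surj} applies.
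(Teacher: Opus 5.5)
Your proof is correct and is essentially the paper's: choose a prime $p\mid m$, observe that $x^m$ is not surjective on $H_{p,d}$, and apply Corollary~\ref{cor: prop A for free profinite} with $k=1\le d-1$. Your caveat that $m=1$ must be excluded is right, since the paper's proof tacitly needs $m\ge 2$ to pick $p\mid m$. Your quotient argument for infinite rank is a valid addition not covered by the paper's proof.
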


    \begin{proof}
     Let $p$ be a prime dividing $m$. Then the word $w(x)=x^m$ is not surjective in $H_{p,d}$ for any $d\geq 2$. Since here the number of variables of $w$ is $k=1$, the claim follows from Corollary \ref{cor: prop A for free profinite}.
    \end{proof}

     \begin{remark}
     The previous result can be obtained also without using Theorem \ref{prop:A}. Namely, by
     \cite{Wa}, the probability that a permutation is an $m$-th power in the symmetric group $S_n$ tends to zero for $n \to \infty$. Thus, since the free profinite group surjects onto any symmetric group, we have, using \eqref{eq: measure r.f. group}, that the measure of the set of $m$-th powers in a free profinite group is zero. 
     \end{remark}

    Now it follows from Proposition \ref{prop: x^m has measure zero in free profinite} that Corollary \ref{cor: in free pro-p groups there is a set with measure 0 that generates open subgroup= verbal set of concatenation} is true also in free profinite groups.

    \begin{corollary}
      Let $d\ge 2$ be an integer and let $\widehat{F}_d$ be the free profinite group of rank $d$. Then there exists a subset $X\subset \widehat{F}_d$ with $X^{-1}=X$, $\mu(X)=0$ and $0<\mu(X^{\ast m})<1$ for some natural number $m$.  
    \end{corollary}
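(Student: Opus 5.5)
Take $X=\widehat{F}_d^{\{x^p\}}$ for a fixed prime $p$ (any $p$ will do). This set is symmetric, since $(g^p)^{-1}=(g^{-1})^p$. In fact $\widehat{F}_d^{\{x^p\}}=\widehat{F}_d^{\{x^p+\}}$, and this image is closed because it is the continuous image of a compact space. Proposition \ref{prop: x^m has measure zero in free profinite} with $m=p$ gives $\mu(X)=0$, since $\widehat{F}_d$ is non-abelian for $d\ge 2$.

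To get $0<\mu(X^{\ast m})<1$ we need an open subgroup of $\widehat{F}_d$ that is a finite product set of $p$-th powers. Consider the verbal subgroup $w(\widehat{F}_d)$ with $w=x^p$. The quotient $\widehat{F}_d/\overline{w(\widehat{F}_d)}$ is the free object of rank $d$ in the variety of profinite groups of exponent $p$. By the positive solution of the restricted Burnside problem (Zelmanov), this quotient is finite, so $\overline{w(\widehat{F}_d)}$ is open. We then need $w(\widehat{F}_d)$ itself to be closed, that is, $x^p$ must have finite width $m$ in $\widehat{F}_d$ (by \cite[Proposition 4.1.2]{S}). This is exactly the content cited in the proof of Corollary \ref{cor: in free pro-p groups there is a set with measure 0 that generates open subgroup= verbal set of concatenation}, namely \cite[Corollary 8.3]{W}, now applied to the finitely generated profinite group $\widehat{F}_d$. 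Alternatively, it follows from Nikolov--Segal: the verbal subgroup of any word with finite index closure in a finitely generated profinite group is closed. I expect this finite-width input to be the only nontrivial step, and I would simply quote it.

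Given width $m$, Lemma \ref{lem: if finite width, verbal subgroup = union of verbal sets} says that $w(\widehat{F}_d)$ is the union of the images of the concatenations $w^{\ast m,\boldsymbol{\varepsilon}}$. Since $X$ is symmetric and $x^{-p}=(x^{-1})^p$, every such image equals $X^{\ast m}$, so $X^{\ast m}=w(\widehat{F}_d)$. This is an open subgroup, so its measure is $1/[\widehat{F}_d:w(\widehat{F}_d)]>0$. It is also a proper subgroup, because $\widehat{F}_d$ surjects onto $C_p$ and every $p$-th power maps to the identity there. Hence $\mu(X^{\ast m})<1$, which completes the argument.
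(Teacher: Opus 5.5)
Your proposal is correct and follows the paper's proof. You take $X$ to be the set of $p$-th powers, get $\mu(X)=0$ from the preceding proposition on $x^m$ in non-abelian free profinite groups, and use finite width of $x^p$ (Nikolov--Segal) plus the restricted Burnside problem to make $X^{\ast m}=w(\widehat{F}_d)$ an open subgroup; your surjection onto $C_p$ supplies the properness that the paper leaves implicit. One correction on the citation: Nikolov--Segal's theorem is for locally finite words such as $x^p$ (this is where the restricted Burnside problem enters), not for an arbitrary word whose verbal subgroup merely has open closure, so quote it in that form.
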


    \begin{proof}
Given a positive integer $n$, take $X=\widehat{F}_d^{\lbrace x^n \rbrace}$. The proof that $X$ has the required properties is the same as the proof of Corollary \ref{cor: in free pro-p groups there is a set with measure 0 that generates open subgroup= verbal set of concatenation}, using that, by work of Nikolov and Segal, the word $x^n$ has finite width in any finitely generated profinite group (\cite[Corollary 2]{NS}).        
\end{proof}

\section{Measure of \texorpdfstring{$\exists$}{E}-definable sets in free pro-\texorpdfstring{$p$}{p} and free profinite groups}
\label{sec: existential}

In this section we will use the notation established in Section~\ref{sec: preliminaries on formulae}. We investigate the $\mathcal{F}$-spectrum of free pro-$p$ and free profinite groups for some special families $\mathcal{F}$ of existential formulae. We already know that there are verbal sets of measure zero and measure one, and also of positive measure different from one. 

\subsection{Zero-one behaviour of some \texorpdfstring{$\mathcal{F}$}{F}-spectra}\label{sec:thmA}
In this section we prove that, if $\mathcal{S}$ is a family of existential formulae $\varphi$ satisfying a certain condition on the number of variables of the words occurring in $\varphi$ relative to the rank $d$ of the free profinite group $\widehat{F}_d$ considered, then $\mathrm{Spec}_{\mu}(\mathcal{S}; \widehat{F}_d)=\lbrace 0,1\rbrace$.

Recall that, given a word $w$, we denote by $\#\mathrm{var}(w)$ the number of variables occurring in $w$. For boolean combinations of words (see Section \ref{sec: preliminaries on formulae}) we immediately obtain the following.

\begin{proposition}
    Let $G$ denote either $H_d$ or $\widehat{F}_d$ with $d\ge 2$ and let $\mathbf{b}$ be a boolean combination of words. Let $\mathcal{B}$ be the family of boolean combinations of words $\mathbf{b}$ such that every word $w$ occurring in $\mathbf{b}$ satisfies one of the following: 
    \begin{itemize}
        \item $w$ is a commutator word, or
        \item $w$ is surjective, or
        \item $w$ is not surjective and satisfies $\# \mathrm{var}(w)+1\leq d$.
    \end{itemize}
     Then $\mathrm{Spec}_\mu(\mathcal{B}; G)=\lbrace 0,1\rbrace$.
\end{proposition}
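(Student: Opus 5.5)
We argue that each atomic set has measure $0$ or $1$ and is closed, and that the relevant sets are so rigid that any boolean combination of them still has measure $0$ or $1$. For a word $w$ occurring in $\mathbf{b}$, put $S_w:=G^{\lbrace w+\rbrace}$, which is closed by Lemma~\ref{lem: G_eta closed, G_iota open}. We treat the three allowed types of words in turn.

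First, if $w$ is surjective, then $S_w=G$ and $\mu(S_w)=1$. Second, if $w$ is not surjective and $\#\mathrm{var}(w)+1\le d$, then $\mu(S_w)=0$. For $G=H_d$ this is Theorem~\ref{prop:A}. For $G=\widehat{F}_d$ it is Corollary~\ref{cor: prop A for free profinite}, combined with the remark after it, which gives a prime $p$ such that $w$ is not surjective on $H_{p,d}$.

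Third, if $w$ is a commutator word, then $S_w\subseteq [G,G]$. This holds because the image of $w$ is trivial in the abelianisation, and $[G,G]$ here denotes the closed derived subgroup. For $d\ge 2$ the subgroup $[G,G]$ has infinite index: its quotient is $\Z_p^d$ or $\widehat{\Z}^d$, which is infinite. Hence $\mu([G,G])=0$ by Lemma 21.1.1 of \cite{FJ}, and so $\mu(S_w)=0$.

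Next we pass to boolean combinations. The words of the first type contribute the set $G$. The other words contribute sets lying in the null set
$$N:=\bigcup_{w\ \text{of type 2 or 3}} S_w,$$
which is a finite union of null sets. Write $\mathbf{b}$ in disjunctive normal form in the atoms $S_w$ and their complements. Every atom is either $G$, $\emptyset$, a null set $S_w$, or a conull set $G\setminus S_w$. Consequently every element $x\in G\setminus N$ satisfies exactly the same atoms: it lies in every $S_w$ with $w$ surjective and in no $S_w$ with $w$ not surjective. Therefore $G_{\mathbf{b}}\cap(G\setminus N)$ is either empty or all of $G\setminus N$. Since $\mu(N)=0$, it follows that $\mu(G_{\mathbf{b}})\in\lbrace 0,1\rbrace$. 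All these sets are Borel, because they are finite boolean combinations of closed sets, so the measures make sense.

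Finally, both values occur. For instance, the formula $\exists y: x=y$ defines $G$, which has measure $1$. The formula $\exists y,z: x=[y,z]$ involves a commutator word and so defines a set of measure $0$.

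There is no serious obstacle here. The points needing care are that the closed commutator subgroup has infinite index for $d\ge 2$, and that in the free profinite case Corollary~\ref{cor: prop A for free profinite} requires non-surjectivity on some $H_{p,d}$. We take the latter from the forward-referenced Lemma~\ref{lem: surjectivity conditions for words in free pro-p}, via the remark after Corollary~\ref{cor: prop A for free profinite}. Alternatively, the non-surjective case can be handled directly: a non-surjective word whose exponent sums have $\gcd$ $1$ is impossible, so some prime $p$ divides all the exponent sums. For that $p$, Lemma~\ref{lem:non-surj} shows that $w$ is not surjective on $H_{p,d}$.
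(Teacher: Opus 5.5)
Your proof is correct and follows essentially the same route as the paper. Each atom $G^{\{w+\}}$ has measure $0$ or $1$, and this $0$--$1$ property survives complements, finite intersections and finite unions; your ``common null set $N$'' argument is just a compact way of saying this. You are in fact more careful than the paper's one-line proof, which cites only Theorem~\ref{prop:A}. Your separate treatment of commutator words via the infinite index of the closed derived subgroup also covers those with $\#\mathrm{var}(w)\ge d$, your gcd argument makes the free profinite case self-contained, and you check that both values $0$ and $1$ are actually attained.
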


\begin{proof}
By Theorem \ref{prop:A} the image of each of the words occurring in $\mathbf{b}$ has measure zero or one, and hence the same is true for the sets defined by their negations. Furthermore, note that, by the inclusion-exclusion principle, finite intersections of sets of measure one have measure one. Thus, the set defined by $\mathbf{b}$ has measure 0 or 1, as it is a finite union of finite intersections of sets of measure zero or one. 
\end{proof}

We now turn to existential formulae.
Recall from (\ref{eq: general form of a formula}) that an existential formula is equivalent to a formula of the form

\begin{equation*}
\mathbf{\exists} \boldsymbol{y}:\bigvee_{j=1}^m \left(w_{j, 1}(x,\boldsymbol{y})\#_{j,1} 1\wedge\cdots\wedge w_{j,k_j}(x,\boldsymbol{y})\#_{j,k_j}1\right),
\end{equation*}

where every $w_{j,i}$ is a word, $m$ and the $k_j$'s are non-negative integers, and $\#_{j,i}\in \lbrace =, \neq\rbrace$.

Rearranging the terms if necessary, we can write each term of the disjunction in this formula as

\begin{equation}
\label{eq: term in existential formula}
\mathbf{\exists} \boldsymbol{y}: \left(\bigwedge_{i=1}^\ell w_{i}(x,\boldsymbol{y})= 1\right)\wedge\left(\bigwedge_{i=\ell+1}^k w_{i}(x,\boldsymbol{y})\neq 1\right)
\end{equation}
for a suitable positive integer $k$ and non-negative integer $\ell$.

In what follows, let $w$ be a non-trivial reduced word in $k+1$ variables.
We look first at formulae of the form $\eta_w(x):=\left(\exists\boldsymbol{y}: w(x, \boldsymbol{y})=1\right)$ and prove Theorem \ref{prop: measure spectrum of E-formulae in free pro-p groups}. Recall from Section \ref{sec: preliminaries on formulae} that we call such a formula also equation formula related to~$w$.

\begin{remark}
    Let $G$ be a free pro-$p$ group and let 
    $$S:=\lbrace (P,\boldsymbol{U})\in G\times G^k\mid w(P,\boldsymbol{U})=1\rbrace.$$
    By \cite[Theorem C]{KOTVW}, $\mu(S)=0$. However, this does not yield any information on the measure of the set $\lbrace P\in G\mid\exists \boldsymbol{U}\in G^k : w(P,\boldsymbol{U})=1\rbrace=\pi_G(S)$, where $\pi_G\colon G\times G^k\rightarrow G$ denotes the canonical projection. 
    
    For example, if $w(x, y)=xy^{-1}$, then $\pi_G(S)=G$ and has therefore measure 1, while, if $w(x, y_1,y_2)=x[y_1,y_2]$, then $\mu(\pi_G(S))=0$. 
    
    If $w(x, y_1, y_2, z_1, z_2, u_1, u_2)=x[y_1,z_1]u_1^p[y_2,z_2]u_2^p$, then $\mu(\pi_{H_2}(S))=\frac{1}{p^2}$. 
\end{remark}

The equation $w(x,\boldsymbol{y})=1$ always admits the trivial solution.
Then let $(P, \boldsymbol{U})$ be a solution to the equation in the free pro-$p$ group $H_d$.
Rewrite
\begin{equation}
\label{eq: rewriting w}
w(P,\boldsymbol{U})=P^{-n(w)}W(\boldsymbol{U})\prod_{j=1}^{N(w)}{\Gamma_j(P,\boldsymbol{U})}, 
\end{equation}
for some integers $n(w)$ (namely, the additive inverse of the sum of the exponents of $P$ in $w(P,\boldsymbol{U})$) and $N(w)$, some word $W(\boldsymbol{y})=\prod_{i=1}^k{y_{i}^{n_{i}}}$ and some commutator words $\Gamma_j$. Here, for each $i\in\lbrace 1,\ldots, k\rbrace$, $n_i$ is the sum of the exponents of the variable $y_i$ in $w$. 

\begin{remark}
\label{rmk: algorithm to find W}
Note that, in the previous equality (\ref{eq: rewriting w}), $n(w)$ and $W$ are uniquely determined from $w$. Moreover, setting $\Gamma:=\prod_j{\Gamma_j}$, $\#\mathrm{var}(W)=\#\mathrm{var}(w)-1=k$ and $\#\mathrm{var}(\Gamma)=\#\mathrm{var}(w)=k+1$. 

In order to obtain the expression (\ref{eq: rewriting w}) starting from $w$ without ambiguities, we will use the following algorithm: 
we move first all occurrences of $P$ to the left, one after the other, until we obtain $P^{-n(w)}$. We then move each $U_i$ to the left, next to $P^{-n(w)}$, without changing their order, until we obtain an expression of the form $P^{-n(w)}\prod_{m}{U_{i_m}^{l_m}}\prod_j\widetilde{\Gamma}_j(P,\boldsymbol{U})$, where $\widetilde{\Gamma}_j$ are commutator words.
Finally, we collect all instances of the $U_i$'s in order to obtain $W$. 

For instance, if $w(P,\boldsymbol{U})=P^3U_2^{3}P^{-1}U_1^5$, our algorithm has the following steps:

\begin{align*}
     w(P,\boldsymbol{U})&=P^2U_2^{3}[U_2^3,P^{-1}]U_1^5\\
    & =P^2U_2^{3}U_1^5[U_2^3,P^{-1}][[U_2^3,P^{-1}],U_1^5]\\
    & =P^2U_1^5U_2^{3}[U_2^3,U_1^5][U_2^3,P^{-1}][[U_2^3,P^{-1}],U_1^5].
\end{align*}

In this case, we find $W(y_1,y_2)=y_1^5y_2^3$, and 
$$ \Gamma_1(x, y_1,y_2)=[y_2^3,y_1^{5}],\
  \Gamma_2(x, y_1,y_2)=[y_2^3,x^{-1}],\
  \Gamma_3(x, y_1,y_2)=[[y_2^3,x^{-1}],y_1^5].$$
\end{remark}

Continuing our analysis of the equation $w(x,\boldsymbol{y})=1$, using \eqref{eq: rewriting w} we can write the equality $w(P,\boldsymbol{U})=1$ as $$P^{n(w)}=W(\boldsymbol{U})\prod_{j=1}^{N(w)}\Gamma_j(P,\boldsymbol{U})=\prod_{i=1}^k{U_{i}^{n_{i}}}\prod_{j=1}^{N(w)}\Gamma_j(P,\boldsymbol{U}).$$

Let $l(w)$ be the greatest common divisor of the exponents $n_i$ of the variables $U_i$ in the word $W$, or $l(w)=0$ if $W$ is the trivial word.

If $n(w)=0$, then any $P\in H_d$ together with the tuple $\boldsymbol{U}$ with each $U_i=1$ gives a solution to the equation $w(P,\boldsymbol{U})=1$. Indeed, in this case $W(\boldsymbol{U})\equiv 1$ and each commutator is trivial. It follows that, if $n(w)=0$, then the measure of $G_{\eta_w}$ is $1$. 

If instead $n(w)\neq 0$, we distinguish two cases: 
\begin{itemize}
    \item $l(w)$ divides $n(w)$;  
    \item $l(w)$ does not divide $n(w)$. 
\end{itemize}

If $l(w)$ divides $n(w)$, any $P$ is a solution of $\eta_w$ by taking as $U_i$ some appropriate powers of $P$.

Otherwise, if $l(w)$ does not divide $n(w)$, we distinguish two subcases:
\begin{enumerate}[leftmargin=7.5mm]
    \item[(a)] \emph{$l(w)=0$ (that is, $W$ is trivial)}.
Then $P^{n(w)}$, and hence $P$, belong to the commutator subgroup of $H_d$, which has infinite index in $H_d$. So in this case $G_{\eta_w}$ has measure $0$.     
    \item[(b)] \emph{$l(w)\neq 0$, $p\mid \gcd(1-n(w),l(w))$ and $\#\mathrm{var}(w)\leq d-1$}. 
If $W$ is not trivial,
define the word in $\#\mathrm{var}(w)$ variables 
\begin{equation}
\label{eq: w tilde}
\tilde{w}(x,\boldsymbol{y}):=x^{-(n(w)-1)}W(\boldsymbol{y})\prod_{j=1}^{N(w)}\Gamma_j(x,\boldsymbol{y})
\end{equation}
obtained from $w$ as described above. As we will see in Lemma \ref{lem: surjectivity conditions for words in free pro-p},
$\tilde{w}$ is surjective if and only if $p\nmid \gcd(1-n(w),l(w))$.

If $P$ is a solution to $\eta_w$, then $P$ lies in the image of $\tilde{w}$.
If $p\mid \gcd(1-n(w),l(w))$, then $\tilde{w}$ is not surjective on $H_d$. Hence, if $\# \mathrm{var}(\tilde{w})=\# \mathrm{var}(w)\leq d-1$, then, by Theorem \ref{prop:A}, the set $(H_d)_{\eta_w}$ is contained in a set of measure $0$. 
It follows that the set of solutions to $\eta_w$ has also measure zero.
\end{enumerate}

We summarise the conditions that we have found on the word $w$ in the following definition.

\begin{definition}
    \label{def: 0-1 word}
    Let $d\ge 2$ be an integer.
    Let $w(x,\boldsymbol{y})$ be a word in $\#\mathrm{var}(w)=k+1$ variables and let $-n(w)$ be the sum of the exponents of $x$ in $w$. 
    Let $W(\boldsymbol{y}):=\prod_{i=1}^k{y_i^{n_i}}$, where each $n_i$ is the sum of the exponents of $y_i$ in the word $w$, and let $l(w)$ be the greatest common divisor of the $n_i$'s. 

    We say that $w$ is a \emph{$(p,d)$-zero-one word} if it satisfies one of the following conditions: 
    \begin{enumerate}
    \item $n(w)=0$; 
    \item $n(w)\neq 0$ and $l(w)\mid n(w)$;
    \item $n(w)\neq 0$ and $l(w)=0$;
    \item $n(w)\neq 0$, $l(w)\neq 0$, $l(w)\nmid n(w)$, $p\mid \gcd(1-n(w),l(w))$  
    and $\#\mathrm{var}(w)\leq d-1$.
    \end{enumerate}

    We say that $w$ is a \emph{$d$-zero-one word} if it is a $(p,d)$-zero-one word for some prime $p$.
\end{definition}

\begin{remark}
    Clearly a word $w$ is a $d$-zero-one word if and only if it satisfies either one of conditions (1), (2), (3) in Definition \ref{def: 0-1 word}, or the condition
    $$n(w)\neq 0,\ l(w)\neq 0,\ l(w)\nmid n(w),\ \gcd(1-n(w),l(w))\neq 1\ \text{and}\ \#\mathrm{var}(w)\leq d-1.$$
\end{remark}    

Theorem~\ref{thm:0-1_intro} now follows immediately from the above discussion. 
Moreover, the proof of Corollary \ref{cor: measure spectrum of E-formulae in free profinite groups} for the free profinite group $\widehat{F}_d$ is identical to the pro-$p$ case except for the case $n(w)\neq 0$ and $l(w)$ does not divide $n(w)$. Here it is enough to observe that $(\widehat{F}_d)_{\eta_w}$ projects onto $(H_{p,d})_{\eta_w}$ for every $p$, and has therefore measure zero.

Note that, if $w$ satisfies conditions (1) or (2) in Definition \ref{def: 0-1 word}, then the measure of the set defined by $\eta_w$ is 1, while, if $w$ satisfies conditions (3) or (4), then such measure is 0.

From the proof of Corollary \ref{cor: prop A for free profinite} we get: 

\begin{corollary}
 Let $\widehat{F}_d$ be the free profinite group of rank $d\ge 2$. 
    Consider the family $\mathcal{S}$ containing formulae $\eta_w(x)$ defined by $\exists\boldsymbol{y}: w(x,\boldsymbol{y})=1$, where the word $w$ runs over the set of words with $\gcd(1-n(w),l(w))\neq 1$ and $\#\mathrm{var}(w)\leq d-1$.
    Then $\mathrm{Spec}_\mu(\mathcal{S}; \widehat{F}_d)=\lbrace 0,1\rbrace$.  
\end{corollary}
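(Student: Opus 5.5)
The plan has two parts. First I show that every set $(\widehat{F}_d)_{\eta_w}$ with $w\in\mathcal{S}$ has measure $0$ or $1$. Then I show that each of the two values is attained. The first part follows the case analysis preceding Definition~\ref{def: 0-1 word}, combined with the proof of Corollary~\ref{cor: prop A for free profinite}. The second part needs explicit words in $\mathcal{S}$, and this is where I expect the real difficulty.

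\emph{Step 1: measure in $\{0,1\}$.} Let $w\in\mathcal{S}$ and write $w(P,\boldsymbol{U})$ as in \eqref{eq: rewriting w}.
\begin{itemize}
\item If $l(w)=0$ and $n(w)\neq 0$, then $P^{n(w)}$ lies in the closed commutator subgroup, which has infinite index. Equivalently, $P$ lies in it modulo torsion-free abelianisation. So the set has measure $0$.
\item If $l(w)\neq 0$, choose a prime $p$ with $p\mid\gcd(1-n(w),l(w))$. By Lemma~\ref{lem: surjectivity conditions for words in free pro-p}, the word $\tilde{w}$ of \eqref{eq: w tilde} is not surjective on $H_{p,d}$. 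It has $\#\mathrm{var}(w)\le d-1$ variables, so Corollary~\ref{cor: prop A for free profinite} makes its image null in $\widehat{F}_d$. Every solution $P$ of $\eta_w$ lies in the image of $\tilde{w}$, since $P=P^{1-n(w)}W(\boldsymbol U)\prod_j\Gamma_j$ when $w(P,\boldsymbol U)=1$. Hence the set has measure $0$.
\item The remaining cases, $n(w)=0$ or $l(w)\mid n(w)$, give the whole group by taking $U_i=1$ or suitable powers of $P$. So the set has measure $1$.
\end{itemize}
Measurability comes from Lemma~\ref{lem: G_eta closed, G_iota open}.

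\emph{Step 2: attaining $0$.} Take $w(x)=x$, so $\#\mathrm{var}(w)=1\le d-1$, $n(w)=-1$, $l(w)=0$, and $\gcd(1-n(w),l(w))=\gcd(2,0)=2\neq1$. Then $(\widehat{F}_d)_{\eta_w}=\{1\}$, which has measure $0$. A less degenerate alternative is $x^{-2}y^{2}$-type words with a prime dividing $l(w)$ and $1-n(w)$, for instance $w=x^{-4}y^{6}$. Here $n(w)=4$, $l(w)=6$, $\gcd(-3,6)=3$, and Step 1 gives measure $0$ via $H_{3,d}$.

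\emph{Step 3: attaining $1$ (main obstacle).} This requires a word in $\mathcal{S}$ falling under case (1) or (2) of Definition~\ref{def: 0-1 word}. Checking this is the first thing I would do, and I expect it to fail. If $n(w)=0$, then $\gcd(1,l(w))=1$, so $w\notin\mathcal{S}$. If $n(w)\neq0$ and $l(w)\mid n(w)$, then any prime $q$ dividing both $l(w)$ and $1-n(w)$ also divides $n(w)$, which is impossible; and $l(w)=0$ forces $n(w)=0$. So the gcd condition appears to exclude every measure-one word.

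I would therefore reread the conventions before going further. Possible sources of a measure-one witness are:
\begin{itemize}
\item the sign convention for $n(w)$;
\item the value of $\gcd(0,0)$, which admits $n(w)=1$, $l(w)=0$, though that is again measure $0$;
\item whether words in which $x$ does not occur are allowed.
\end{itemize}
If no reading produces a witness, the argument above proves $\mathrm{Spec}_\mu(\mathcal{S};\widehat{F}_d)=\{0\}\subseteq\{0,1\}$. In that case the stated equality can only hold with $\mathcal{S}$ enlarged, as in Corollary~\ref{cor: measure spectrum of E-formulae in free profinite groups}, to also contain the words of cases (1)–(2), for example $w=xy^{-1}$, whose equation set is all of $\widehat{F}_d$. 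This is the point I would flag.
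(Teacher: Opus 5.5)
Your Step~1 is the argument the paper has in mind: its one-line proof refers to Corollary~\ref{cor: prop A for free profinite} applied to the word $\tilde w$ of \eqref{eq: w tilde} as in case~(b) before Definition~\ref{def: 0-1 word}, with case~(a) handling $l(w)=0$; and $w=x$ does show that $0$ is attained for every $d\ge 2$ (whereas $x^{-4}y^{6}$ only belongs to $\mathcal{S}$ once $d\ge 3$). Your Step~3 objection is correct and exposes a flaw in the statement, not in your proof: if $n(w)=0$ or $0\neq l(w)\mid n(w)$, then $\gcd(1-n(w),l(w))$ divides both $1-n(w)$ and $n(w)$, hence equals $1$, so by Corollary~\ref{cor: surjective words} no formula in $\mathcal{S}$ defines a set of measure one and $\mathrm{Spec}_\mu(\mathcal{S};\widehat{F}_d)=\{0\}$; the paper's argument therefore only yields the inclusion in $\{0,1\}$, and equality holds instead for the enlarged family of $d$-zero-one words in Corollary~\ref{cor: measure spectrum of E-formulae in free profinite groups}.
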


Next, we consider the measure of the sets defined by inequalities $\iota_w(x):=\exists\boldsymbol{y}: w(x,\boldsymbol{y})\neq 1$.

\begin{lemma}
    \label{lem: measure sets defined by inequalities}
    Let $G$ be a free pro-$p$ (respectively~free profinite) group, and let $\mathcal{I}$ be the collection of formulae $\iota_w(x):= \exists\boldsymbol{y}: w(x,\boldsymbol{y})\neq 1$, where $w$ is any non-trivial word. Then $\mathrm{Spec}_\mu(\mathcal{I}; G)=\lbrace 1\rbrace$.
\end{lemma}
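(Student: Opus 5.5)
The plan is to pass to the complement. Write $w=w(x,\boldsymbol{y})$ with $\boldsymbol{y}$ a $k$-tuple, and set
\[
C:=G\setminus G_{\iota_w}=\lbrace g\in G\mid w(g,\boldsymbol{h})=1 \text{ for all } \boldsymbol{h}\in G^k\rbrace .
\]
By Lemma~\ref{lem: G_eta closed, G_iota open}, $G_{\iota_w}$ is open, so $C$ is closed and in particular measurable. Hence it suffices to prove $\mu_G(C)=0$, which gives $\mu_G(G_{\iota_w})=1$. Note that we only aim for measure one, not $G_{\iota_w}=G$: if $w$ is a law in $G$ with $x$ substituted, $C$ may well contain $1$.

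Step 1 treats the free pro-$p$ case $G=H_d$. Consider the solution set
\[
S:=\lbrace (g,\boldsymbol{h})\in G\times G^k\mid w(g,\boldsymbol{h})=1\rbrace .
\]
As recalled in the remark preceding Remark~\ref{rmk: algorithm to find W}, \cite[Theorem C]{KOTVW} gives $\mu_{G^{k+1}}(S)=0$ for a non-trivial word in a free pro-$p$ group. By the definition of $C$ we have $C\times G^k\subseteq S$. The Haar measure on $G^{k+1}$ is the product of the Haar measures on the factors, by uniqueness of the normalised Haar measure, since the product measure is translation invariant. Therefore
\[
\mu_G(C)=\mu_G(C)\cdot\mu_{G^k}(G^k)=\mu_{G^{k+1}}(C\times G^k)\le \mu_{G^{k+1}}(S)=0 .
\]
This argument does not require $x$ to actually occur in $w$; we only need $w$ to be non-trivial.

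Step 2 reduces the free profinite case $G=\widehat{F}_d$ to Step 1. Fix a prime $p$ and let $\pi\colon \widehat{F}_d\to H_{p,d}$ be the canonical epimorphism. If $g\in C$, then for every $\boldsymbol{h}'\in H_{p,d}^k$ we can lift $\boldsymbol{h}'$ to some $\boldsymbol{h}$ and obtain $w(\pi(g),\boldsymbol{h}')=\pi(w(g,\boldsymbol{h}))=1$. Hence $\pi(C)\subseteq C_p$, the analogous set in $H_{p,d}$. The set $C_p$ is closed and, by Step 1, $\mu(C_p)=0$. Since $C\subseteq\pi^{-1}(C_p)$, Proposition~\ref{prop: quotient measure of a set} gives $\mu(C)\le\mu(\pi^{-1}(C_p))=\mu(C_p)=0$. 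This is in the spirit of Corollary~\ref{cor: measure grows with quotient}; note that $\pi(C)$ is compact, hence closed, so it is measurable anyway.

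The main obstacle is checking that the cited result \cite[Theorem C]{KOTVW} really applies in the generality needed, namely to every non-trivial word and every rank. If it is stated only for non-abelian free pro-$p$ groups, the rank-one case $G\cong\Z_p$ must be handled directly. In additive notation $w(g,\boldsymbol{h})=a g+\sum_i a_i h_i$ with some coefficient non-zero. If some $a_i\neq 0$, then $C\subseteq\lbrace g\mid p^{v_p(a_i)}\Z_p=0\rbrace=\emptyset$, so $C$ is empty. If $w=x^a$ with $a\neq 0$, then $C=\lbrace 0\rbrace$ because $\Z_p$ is torsion-free, and $\mu(\lbrace 0\rbrace)=0$. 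The same computation shows that $S$ is contained in a proper closed subgroup of infinite index, hence is null. With this case covered, the statement $\mathrm{Spec}_\mu(\mathcal{I};G)=\lbrace 1\rbrace$ follows in all cases.
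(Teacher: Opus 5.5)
Your Steps 1 and 2 are correct and are essentially the paper's proof. The paper also gets $\mu(S)=0$ from \cite[Theorem C]{KOTVW}, handling the free profinite case by passing to a free pro-$p$ quotient. It then pushes the full-measure complement of $S$ forward along the projection $G\times G^k\to G$, which amounts to your inclusion $(G\setminus G_{\iota_w})\times G^k\subseteq S$.

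Your final paragraph on rank one, however, is wrong. A non-trivial word need not have a non-zero exponent sum: take $w=[x,y]$, or any word in $[F_{k+1},F_{k+1}]$. For such $w$ in $G\cong\Z_p$ (or $\widehat{\Z}$) one gets $S=G^{k+1}$ and $C=G$, hence $G_{\iota_w}=\emptyset$ and $\mu(G_{\iota_w})=0$. For the same reason, your claim that $S$ lies in a proper closed subgroup of infinite index also fails.

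So the lemma is actually false in rank one. It has to be read for non-abelian free pro-$p$ or free profinite groups, which is the paper's standing setting and the setting in which Theorem C of \cite{KOTVW} applies. You should exclude the abelian case rather than assert that the statement follows ``in all cases''.
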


\begin{proof}
Let $w$ be any non-trivial word in $k+1$ variables.
We want to show that 
  $$\mu((G)_{\iota_w})=\lbrace P\in G\mid \exists \boldsymbol{U}\in G^k: w(P,\boldsymbol{U})\neq 1\rbrace=1.$$ 
 
Let $S:=\lbrace (P, \boldsymbol{U})\in G\times G^k: w(P,\boldsymbol{U})= 1\rbrace$. Then, if $G$ is free pro-$p$, we know by \cite[Theorem C]{KOTVW} that $\mu(S)=0$.
On the other hand, if $G$ is free profinite, it is enough to consider the projection of $S$ onto a free pro-$p$ quotient and use Corollary \ref{cor: measure grows with quotient} to deduce that $\mu(S)=0$ also in this case.

In both cases it follows that the measure of the complement $C$ of $S$ is $$\mu(C)=\mu(\lbrace (P, \boldsymbol{U})\in G\times G^k: w(P,\boldsymbol{U})\neq 1\rbrace)=1.$$

 Let $\pi:=\pi_G\colon G\times G^k\rightarrow G$ be the projection on the first coordinate. Then $(G)_{\iota_w}=\pi(C)$ is open, and therefore measurable. Since $C\subseteq \pi^{-1}(\pi(C))$, it follows from Proposition \ref{prop: quotient measure of a set} that $$1=\mu_{G\times G^k}(C)\leq \mu_{G\times G^k}(\pi^{-1}(\pi(C)))=\mu_{G}(\pi(C)).$$ We conclude that $\mu_G(G_{\iota_w})=\mu_G(\pi(C))=1$.
 \end{proof}

We conclude this section with some remarks on general formulae. Using the inclusion-exclusion principle, we immediately obtain the following.

\begin{corollary}
If $w$ is a $(p,d)$-zero-one word (respectively, a $d$-zero-one word) (see Definition \ref{def: 0-1 word}), then boolean combinations of formulae of the form $\eta_w(x)$ and $\iota_w(x)$ with disjoint bound variables define sets of either measure 0 or 1 in the free pro-$p$ groups (respectively, in free profinite groups) of rank $d$. 
\end{corollary}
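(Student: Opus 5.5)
The plan is to reduce to the two kinds of atomic sets already handled. Fix a $(p,d)$-zero-one word $w$ (resp.\ a $d$-zero-one word) and let $G$ be $H_d$ (resp.\ $\widehat{F}_d$). The formulae $\eta_w(x)$ and $\iota_w(x)$ each have the single free variable $x$. Their bound variables are disjoint, so each formula defines a subset of $G$ that depends only on that formula. A boolean combination of such formulae therefore defines the corresponding boolean combination (unions, intersections, complements) of the sets $G_{\eta_w}$ and $G_{\iota_w}$. So it suffices to check two things: each atomic set has measure $0$ or $1$, and the family of measurable subsets of measure $0$ or $1$ is closed under finite boolean operations.

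First, the atomic sets. By Lemma~\ref{lem: G_eta closed, G_iota open}, $G_{\eta_w}$ is closed and $G_{\iota_w}$ is open, so both are measurable.
\begin{itemize}
\item \emph{Equation formulae.} By the case analysis preceding Definition~\ref{def: 0-1 word}, i.e.\ Theorem~\ref{thm:0-1_intro} (resp.\ Corollary~\ref{cor: measure spectrum of E-formulae in free profinite groups}), $\mu(G_{\eta_w})$ is $1$ under conditions (1), (2) and $0$ under conditions (3), (4). In the profinite case with condition (4), I use that $(\widehat F_d)_{\eta_w}$ maps into $(H_{p,d})_{\eta_w}$, together with Corollary~\ref{cor: measure grows with quotient}.
\item \emph{Inequality formulae.} By Lemma~\ref{lem: measure sets defined by inequalities}, $\mu(G_{\iota_w})=1$ whenever $w$ is non-trivial. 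If $w$ is trivial, then $G_{\iota_w}=\emptyset$, of measure $0$.
\end{itemize}

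Second, closure. The collection $\mathcal{Z}=\{A \text{ measurable}: \mu(A)\in\{0,1\}\}$ is closed under complements, since $\mu(G\setminus A)=1-\mu(A)$. It is closed under finite unions: if some $A_i$ has measure $1$, the union has measure $1$ by monotonicity, and otherwise the union has measure at most $\sum_i \mu(A_i)=0$ by subadditivity. Closure under finite intersections then follows by De Morgan, or directly from inclusion--exclusion, which gives $\mu(A\cap B)=\mu(A)+\mu(B)-\mu(A\cup B)$. Writing the boolean combination in disjunctive normal form and applying these closure properties shows that the defined set lies in $\mathcal{Z}$.

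The only real subtlety is the bookkeeping. I need that the hypothesis ``disjoint bound variables'' really makes the defined set equal to the set-theoretic boolean combination of the atomic definable sets. I also need to handle that the statement may allow different zero-one words in different atoms, which is harmless because each atom is treated separately. Neither point is a genuine obstacle, so the proof is essentially the same inclusion--exclusion argument as in the proposition on boolean combinations of words.
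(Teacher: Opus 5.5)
Your proposal is correct and is essentially the paper's argument. The paper also gets measure $0$ or $1$ for the atomic sets $G_{\eta_w}$ and $G_{\iota_w}$ from Theorem~\ref{thm:0-1_intro} (resp.\ Corollary~\ref{cor: measure spectrum of E-formulae in free profinite groups}) and Lemma~\ref{lem: measure sets defined by inequalities}, and then concludes ``by inclusion--exclusion'', which is exactly your closure argument for sets of measure $0$ or $1$.
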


We now give some examples for the measures of sets defined by positive existential formulae. In particular, we give some conditions that imply that the set defined by such a formula has measure zero or one. Since a word formula is a special case of positive existential formula, we already know that, in general, it is possible to find sets defined by such formulae that have measure zero, or one, or positive but different from one.

A \emph{positive existential formula} $\varphi$ consists of the conjunction of finite systems of word equations. More precisely, $\varphi$ is equivalent to a formula of the form

\begin{equation}
\label{eq: positive existential}
\mathbf{\exists} \boldsymbol{y}:\bigvee_{j=1}^m \bigwedge_{i=1}^\ell w_{j,i}(x,\boldsymbol{y})= 1,
\end{equation}

for some positive integers $m$ and $\ell$, and some words $w_{j,i}$. 

\begin{lemma}
\label{lem: sets defined by positive existential formulae are measurable}
    Let $\varphi$ be a positive existential formula and let $G$ be a profinite group. Then $G_\varphi$ is closed, hence measurable.
\end{lemma}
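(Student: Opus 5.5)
The plan is to follow the proof of Lemma~\ref{lem: G_eta closed, G_iota open} and use that finite unions and finite intersections of closed sets are closed. By the normal form \eqref{eq: positive existential}, we may assume
\[
\varphi(x)=\exists \boldsymbol{y}:\bigvee_{j=1}^m \bigwedge_{i=1}^\ell w_{j,i}(x,\boldsymbol{y})= 1,
\]
with $\boldsymbol{y}$ a $k$-tuple of variables. The key observation is that the existential quantifier commutes with the disjunction. Hence
\[
G_\varphi=\bigcup_{j=1}^m \pi_G(S_j),\qquad S_j:=\bigl\{(x,\boldsymbol{y})\in G\times G^k \mid w_{j,i}(x,\boldsymbol{y})=1 \text{ for all } i=1,\ldots,\ell\bigr\},
\]
where $\pi_G\colon G\times G^k\to G$ is the projection onto the first factor.

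First I check that each $S_j$ is closed. Every word map $w_{j,i}\colon G\times G^k\to G$ is continuous, being a composition of multiplications and inversions in a topological group. Since $\{1\}$ is closed in the Hausdorff group $G$, the set $w_{j,i}^{-1}(1)$ is closed. Then $S_j=\bigcap_{i=1}^\ell w_{j,i}^{-1}(1)$ is a finite intersection of closed sets, hence closed.

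Next, $\pi_G(S_j)$ is closed. The cleanest justification is compactness, rather than the claim that $\pi_G$ is a closed map. The set $S_j$ is a closed subset of the compact space $G\times G^k$, so it is compact. Its continuous image $\pi_G(S_j)$ is therefore compact in the Hausdorff space $G$, and hence closed.

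Finally, $G_\varphi$ is a finite union of closed sets, so it is closed, and closed sets are Borel and therefore measurable for the Haar measure. The only step needing care is the interchange of $\exists$ with $\bigvee$, which is a purely logical equivalence. I expect no real obstacle in this argument.
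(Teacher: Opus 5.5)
Your proof is correct and follows essentially the same route as the paper: you split the disjunction into the formulae $\varphi_j$, realise each $G_{\varphi_j}$ as the projection of the closed set cut out by the $\ell$ equations, and take a finite union. Your compactness argument for the closedness of $\pi_G(S_j)$ is just a more explicit version of the paper's appeal to the projection being a closed map, which holds precisely because $G^k$ is compact.
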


\begin{proof}
    For every $j\in\lbrace1,\ldots,m\rbrace$, let $\varphi_j$ be the formula $\mathbf{\exists} \boldsymbol{y}: \bigwedge_{i=1}^\ell w_{j,i}(x,\boldsymbol{y})= 1$. Then $G_{\varphi_j}=\pi_G(\lbrace(x,\boldsymbol{y})\mid w_{j,1}(x,\boldsymbol{y})=\cdots=w_{j,\ell}(x,\boldsymbol{y})=1\rbrace)$ is a closed set of $G$. Since $G_\varphi=\cup_{j=1}^mG_{\varphi_j}$ is a finite union of closed sets, it is closed, hence measurable.
\end{proof}

\begin{example}
\label{ex: measure set defined by positive existential}
Let $\varphi$ be a positive existential formula and let $G$ denote either $H_d$ or $\widehat{F}_d$. In this example we will find some conditions on $\varphi$ under which $\mathrm{Spec}_\mu(G,\varphi)=\lbrace 0,1\rbrace$.

Note that, by using the inclusion-exclusion principle, it is enough to consider the case when $\varphi$ is equivalent to $\mathbf{\exists} \boldsymbol{y}:\bigwedge_{i=1}^\ell w_{i}(x,\boldsymbol{y})= 1$.
Then $G_\varphi\subseteq \cap_{i=1}^\ell G_{ \eta_{w_i}}$.
It follows that, if one of the words $w_i$ occurring in $\varphi$ is such that $\mu(G_{\eta_{w_i}})=0$ for some $i\in\lbrace 1,\ldots, \ell\rbrace$, then $\mu(G_\varphi)=0$. 

Then suppose that all $w_i$ satisfy $\mu(G_{\eta_{w_i}})=1$. 
Using the previous notation, for each $i$ denote by $n(w_i)$ the sum of the exponents of $x$ occurring in $w_i(x,\boldsymbol{y})$ and by $l_i$ the greatest common divisor of the sum of the exponents of the $y_j$'s in $w(x,\boldsymbol{y})$. 
Rewrite $\varphi$ as
$$\mathbf{\exists} \boldsymbol{y}:\bigwedge_{i=1}^{\ell'} w_{i}(x,\boldsymbol{y})= 1\wedge \bigwedge_{i=\ell'+1}^{\ell} w_{i}(x,\boldsymbol{y})= 1,$$ where $n(w_i)\neq 0$ for all $i\in \lbrace 1,\ldots, \ell'\rbrace$ and $n(w_i)=0$ for all $i\in\lbrace \ell'+1,\ldots,\ell\rbrace$, and let $k$ be the length of the string $\boldsymbol{y}$.

If $\ell'=0$, then every element of $G$ is a solution of $\varphi$ and $\mu(G_\varphi)=1$. Indeed, in this case, given any $P\in G$, substituting $U_j=1$ for every $y_j$ yields a solution to the formula in $G$. 

Hence assume that $\ell'\neq 0$. If the sets of $y_j$'s occurring in the equations are pairwise disjoint, then $\mu(G_\varphi)=1$ by the inclusion-exclusion principle.

We now find some conditions that imply $\mu(G_\varphi)=0$ under the assumption $\ell'\ge 2$. 
Denote by $n_{i,j}$ the sum of the exponents of $y_j$ in $w_i$.

 Suppose that $\ell'\ge 2$ and take any $i_1\neq i_2\in\lbrace 1,\ldots,\ell'\rbrace$. Consider the sum of the exponents $n_{i_1+i_2,j}:=n_{i_1,j}+n_{i_2,j}$ of $y_j$ occurring in the equations $w_{i_1}(x,\boldsymbol{y})=1$ and $w_{i_2}(x,\boldsymbol{y})=1$. Moreover, assume that $n(w_{i_1})+n(w_{i_2})\neq 0$ for at least one pair $(i_1,i_2)$. 
 \begin{itemize}[leftmargin=4mm]
 \item[(a)] If $n_{i_1+i_2,j}=0$ for every $j$, then, multiplying the $i_1$-th and the $i_2$-th equation we obtain:
 $$x^{n(w_{i_1})+n(w_{i_2})}=\tilde{\Gamma}(x,\boldsymbol{y)}\prod_{j_{1}=1}^{N(w_{i_1})}{\Gamma_{j_{1}}(x,\boldsymbol{y})}\prod_{j_{2}=1}^{N(w_{i_2})}{\Gamma_{j_{2}}(x,\boldsymbol{y})},$$
 for some product of commutators $\tilde{\Gamma}(x,\boldsymbol{y})$.
 Therefore, any solution to $\varphi$ must lie in the commutator subgroup of $G$ and therefore $\mu(G_\varphi)=0$.
 
 \item[(b)] 
  Now suppose that $n_{i_1+i_2,j}\neq 0$ for at least one $j$ and that $\mathrm{gcd}_j(n_{i_1+i_2,j})$ does not divide $n(w_{i_1})+n(w_{i_2})$.
 Multiplying also in this case the $i_1$-th and $i_2$-th equations, we find an equality of the form $x^{n(w_{i_1})+n(w_{i_2})}=\tilde{w}(x,\boldsymbol{y})$. If this equation satisfies condition (4) of Definition \ref{def: 0-1 word}, then, any solution to the original formula is a solution to a formula whose solution set has measure 0, and so $\mu(G_\varphi)=0$.
 \end{itemize}
 \end{example}

\begin{remark}
Note that the cases presented in the previous example are far from exhaustive. 
In particular, in this remark we give three examples corresponding to three different behaviours not represented in the previous Example \ref{ex: measure set defined by positive existential}. All three examples concern a formula $\varphi$ of the form
$$\exists\boldsymbol{y}: w_1(x,\boldsymbol{y})=1\wedge w_2(x,\boldsymbol{y})=1$$
where $\mu(G_{\eta_{w_i}})=1$ for $i\in\lbrace 1,2\rbrace$.
We will find: in (1) $\mu(G_\varphi)=0$, in (2)  $0<\mu(G_\varphi)<1$ and, in (3), $\mu(G_\varphi)=1$, showing that all behaviours are possible.

\begin{enumerate}

\item Consider the formula $\exists y_1,y_2:x=y_1y_2^3=y_1^2y_2^3$. Then every element in $G$ is solution to each equation considered separately. However, if $P$ is a solution to the formula (say with $y_i$ replaced by $U_i\in G$ for every $i\in\lbrace 1,2\rbrace$), then necessarily $U_1=1$, and therefore $P$ is a cube. Hence $G_\varphi\subseteq G^{\lbrace x^3\rbrace}$ has measure 0. Here, the difference with the second case of (2) in Example \ref{ex: measure set defined by positive existential} is that $\#\mathrm{var}(\tilde{w})=2$ but $\mu(G_\varphi)=0$ also holds when $d=2$ (i.e., here the condition $\#\mathrm{var}(\tilde{w})\le d-1$ is not necessary).

\item As an example of two sets of measure one whose intersection has positive measure different from one, consider $G=H_2$ the free pro-$p$ group of rank $2$ with $p\neq 2$, and the formula $\varphi$ given by $$\exists y_1,\ldots,y_6: x=y_1y_2^p[y_3,y_4][y_5,y_6]=y_1^2y_2^p[y_3,y_4][y_5,y_6].$$ Then each component of $\varphi$ defines a set of measure one. However, $G_\varphi\subseteq\lbrace P: \exists U_2,\ldots,U_6: P=U_2^p[U_3,U_4][U_5,U_6]\rbrace:=S$ and therefore $\mu(G_\varphi)\leq \frac{1}{p^2}$. Since the set $S$ is clearly contained in $G_\varphi$, we can conclude that $\mu(G_\varphi)=\frac{1}{p^2}$. 
Note that, if $d>5$, we would find $\mu((H_d)_\varphi)=0$ (compare with Example \ref{ex: measure set defined by positive existential} for $\ell'\ge 2$, case (b) with $p=3$).

\item Finally, the formula $\exists y_1,y_2: x=y_1y_2=y_1^2y_2$ defines a set of measure one. This example is not included in Example \ref{ex: measure set defined by positive existential} because here $\ell'\neq 0$ and the bounded variables in the formulae $\eta_{w_1}$ and $\eta_{w_2}$ are not disjoint. 
\end{enumerate}
\end{remark}

In  conclusion, the following question remains open, in general. 

\begin{question}
Using the same notation as in Example \ref{ex: measure set defined by positive existential}, what is the relation between the words $w_i$ and $\mu(G_\varphi)$ when $\mu(G_{\eta_{w_i}})=1$ for every $i\in\lbrace 1,\ldots, \ell\rbrace$, $\ell'\neq 0$ and the sets of bounded variables occurring in the equations are not pairwise disjoint?
\end{question}

\subsection{Abstract free groups vs. profinite free groups}

In this section we make some remarks regarding the measure of definable sets of abstract free groups, and compare what happens in free abstract groups and in free profinite groups.

Let $S$ be a subset of a residually finite group $G$.
Since the closure $\overline{S}$ of $S$ in the profinite completion $\widehat{G}$ is measurable, then we say that the Haar measure $\mu_\infty(S)$ of $S$ is the measure of its closure in $\widehat{G}$.

Analogously, if $G$ is residually-finite-$p$ group and $S$ is a subset whose closure is measurable in the pro-$p$ completion of $G$, we say that the Haar measure $\mu_p(S)$ of $S$ is the measure of its closure.

By slight abuse of notation, we will denote by $\mathrm{Spec}_{\mu_\infty}(\mathcal{F}, F_d)$ the possible measures of \emph{the closure} of $(F_d)_\varphi$ in $\widehat{F}_d$, for $\varphi\in\mathcal{F}$. Similar notation will be used for $\mathrm{Spec}_{\mu_p}(\mathcal{F}, F_d)$

\begin{remark}\label{rmk:free_abs}
Let $w$ be a $(p,d)$-zero-one word (respectively, a $d$-zero-one word).
Theorem \ref{prop: measure spectrum of E-formulae in free pro-p groups} and Corollary \ref{cor: measure spectrum of E-formulae in free profinite groups} concern the measure of the sets of solutions of equations $\eta_w$ in free pro-$p$ and free profinite groups. Since $G_{\eta_w}$ is closed, the measure of $G_{\eta_w}$ is one exactly when every element of the group is a solution of the formula. Using the previous notation, this happens when $n(w)=0$ and when $n(w)\neq 0$ and $l(w)$ divides $n(w)$. In particular, if we look at the same formula in a discrete free group $F_d$ (or, actually, in any group), using the same argument, under these conditions every element of $F_d$ is a solution of $\eta_w$. Hence, if we consider the measures $\mu_\infty$ and $\mu_p$, under these conditions on $w$ we obtain that $\mu_\infty((F_d)_{\eta_w})=\mu_p((F_d)_{\eta_w})=1$.  
    
Analogously, assume that $\mu_{H_d}((H_d)_{\eta_w})=0$ (respectively, $\mu_{\widehat{F}_d}((\widehat{F}_d)_{\eta_w})=0$).
Since $\eta_w$ is an existential formula, $(F_d)_{\eta_w}$ embeds into $(H_d)_{\eta_w}$ and into $(\widehat{F}_d)_{\eta_w}$. The latter sets being closed, the closure of $(F_d)_{\eta_w}$ in the respective groups embeds into $(H_d)_{\eta_w}$ and $(\widehat{F}_d)_{\eta_w}$ as well. 
Then the closure of $(F_d)_{\eta_w}$ embeds into a set of measure 0 in $H_d$ (respectively $\widehat{F}_d$) and, as a consequence, it has measure 0 as well. It follows that, if we restrict to the family $\mathcal{E}(p)$ (respectively, $\mathcal{E}$) of equations $\eta_w$ associated to $(p,d)$-zero-one words (respectively, $d$-zero-one words), then $\mathrm{Spec}_{\mu_\infty}(\mathcal{E}; F_d)=\mathrm{Spec}_{\mu_p}(\mathcal{E}(p); F_d)=\lbrace 0,1\rbrace$.
\end{remark}

Merzljakov proved that (abstract) free groups have \emph{trivial} positive theory (\cite{Me}). This means that a positive sentence that is true in any non-abelian free group is true in every group. We will use Remark~\ref{rmk:free_abs} to show that something analogous happens for free profinite groups and equation formulae, namely that an equation formula that is satisfied by every element of some non-abelian free profinite group is satisfied by every element of every group. In order to do so, we need the following lemma.

\begin{lemma}
\label{lem: n neq 0 and l does not divide n; measure is not 1}
    Let $\widehat{F}_d$ be the free profinite group of rank $d\ge 2$.
    Using the previous notation, assume that the word $w$ satisfies the following conditions: 
    \begin{enumerate}
    \item $n(w)\neq 0$, and 
    \item $l(w)\neq 0$, and 
    \item $l(w)\nmid n(w)$. 
    \end{enumerate}
    
    Then $\mu((\widehat{F}_d)_{\eta_w})\neq 1$.

    If $F_d$ denotes the abstract free group of rank $d$, then, under the same hypotheses on $w$, not every element of $F_d$ is a solution to $\eta_w$ in $F_d$. 
\end{lemma}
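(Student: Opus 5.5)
Since $(\widehat{F}_d)_{\eta_w}$ is closed (Lemma \ref{lem: G_eta closed, G_iota open}), Lemma \ref{lem: measure closed set =1 iff C=G} reduces the first claim to exhibiting one element of $\widehat{F}_d$ that is not a solution of $\eta_w$. The plan is to detect non-solutions in a finite abelian quotient, where all commutator factors $\Gamma_j$ of \eqref{eq: rewriting w} vanish. In any abelian group $A$, written additively, $w(a,\boldsymbol{u})=0$ becomes
\[
n(w)\,a = \sum_{i=1}^k n_i u_i .
\]
Hence the set of solutions of $\eta_w$ in $A$ is exactly $l(w)A$, since the subgroup generated by the $n_iA$ is $l(w)A$ by B\'ezout.

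Next I choose the quotient. Set $l:=l(w)\neq 0$, so $l\nmid n:=n(w)$. Then there is a prime power $q^e$ with $q^e\mid l$ but $q^e\nmid n$. Take $A=\mathbb{Z}/q^e\mathbb{Z}$, which is a quotient of $\widehat{F}_d$ via $\pi$ sending the first generator to $1$ and the others to $0$. In $A$ the solution set is $lA\subseteq q^eA=0$.

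Now I check that a generator fails. Let $x_1$ be the first generator of $\widehat{F}_d$. If $x_1$ satisfied $\eta_w$, then applying $\pi$ gives $n\cdot 1\in lA=0$ in $\mathbb{Z}/q^e\mathbb{Z}$, i.e.\ $q^e\mid n$, a contradiction. Therefore $(\widehat{F}_d)_{\eta_w}\neq\widehat{F}_d$, and so $\mu((\widehat{F}_d)_{\eta_w})\neq 1$.

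The abstract statement follows by the same argument: $F_d$ maps onto $\mathbb{Z}/q^e\mathbb{Z}$, and solutions map to solutions because $\eta_w$ is existential and positive, so the generator $x_1\in F_d$ is not a solution. Alternatively, one can map onto $\mathbb{Z}$ directly, where the solution set is $l\mathbb{Z}\not\ni n\cdot 1$.

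The only delicate step is choosing the right finite quotient. One might first try reducing modulo $l$ itself, which also works: $n\not\equiv 0 \pmod l$, and $\mathbb{Z}/l$ is a quotient of $\widehat{F}_d$. So there is no real obstacle, apart from checking that the abelianised equation correctly captures the exponent sums $n(w)$ and $n_i$ from Definition \ref{def: 0-1 word}.
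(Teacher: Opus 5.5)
Your proof is correct and follows essentially the paper's route: both pass to an abelian quotient, where a solution must satisfy $n(w)\bar P\in l(w)A$, and then show that a generator fails. The paper works in the torsion-free group $(\widehat{F}_d)_{\mathrm{ab}}$ and reduces to $\gcd(n(w),l(w))=1$ by unique root extraction. You instead use the finite cyclic quotient $\mathbb{Z}/q^e\mathbb{Z}$ (or simply $\mathbb{Z}/l(w)\mathbb{Z}$), where $l(w)A=0$. This avoids that reduction and makes explicit why the paper's final ``impossible'' step holds.

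One wording slip: the solution set of $\eta_w$ in $A$ is $\{a\in A : n(w)a\in l(w)A\}$, not $l(w)A$. For example, $n(w)=2$, $l(w)=4$, $A=\mathbb{Z}/4\mathbb{Z}$ gives $\{0,2\}$, whereas $l(w)A=0$. Your actual deduction only uses $n(w)\cdot 1\in l(w)A$, which is correct.
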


\begin{proof}
Let $G$ denote $\widehat{F}_d$.
Recall that, by Lemma \ref{lem: measure closed set =1 iff C=G}, if $C$ is any closed set in $G$, then $\mu(C)=1$ if and only if  $C=G$. 

Consider a word $w$ with $k+1$ variables such that $n(w)\neq 0$ and $l(w)$ does not divide $n(w)$. Let $P\neq 1$ be a solution in $G$ to the formula $\exists\boldsymbol{y}: w(x,\boldsymbol{y})=~1$.
In the abelianisation $G_{\text{ab}}$ of $G$ we have $\overline{P}^{n(w)}=\overline{U}_1^{n_1}\cdots \overline{U}_k^{n_k}$, and $l(w)=\mathrm{gcd}_i(n_i)$. Then $\overline{P}^{n(w)}\in G_{\text{ab}}^{l(w)}$. Since $G_{\text{ab}}$ is torsion-free, extraction of roots is unique, so we can reduce to the case $\gcd(n(w),l(w))=1$. If every element of $G$ was a solution to $\eta_w$, then every element of $G_{\text{ab}}$ would be a solution to $\eta_w$, from which one would obtain that every $n(w)$-th power in $G_{\text{ab}}$ is an $l(w)$-th power. Since this is impossible (for instance, take as $P$ a generator of $G_{\text{ab}}$), we conclude that $\mu(G_{\eta_w})<1$.

The same argument yields that, under the given hypotheses on $w$, not every element of $F_d$ is a solution to $\eta_w$.
\end{proof}

\begin{corollary}
\label{cor: trivial eq in free profinite groups}
    Let $\widehat{F}_d$ be the free profinite group of rank $d\ge 2$ and let $\eta_w$ be an equation formula related to the word $w$. 
    Then the sentence $\forall x:\eta_w(x)$ is true in $\widehat{F}_d$ if and only if it is true in every group.
\end{corollary}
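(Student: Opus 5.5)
The plan is a case analysis on $n(w)$ and $l(w)$. The ``if'' direction is trivial: if $\forall x:\eta_w(x)$ holds in every group, it holds in $\widehat{F}_d$. For the converse, assume $\forall x:\eta_w(x)$ holds in $\widehat{F}_d$, so $(\widehat{F}_d)_{\eta_w}=\widehat{F}_d$ and in particular $\mu((\widehat{F}_d)_{\eta_w})=1$. I will show that $w$ must satisfy condition (1) or (2) of Definition~\ref{def: 0-1 word}. Then I will check that under either condition the sentence holds in every group.

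\textbf{Excluding the other cases.} Suppose first that $n(w)\neq 0$ and $l(w)=0$. Then every solution $P$ satisfies $P^{n(w)}\in[\widehat{F}_d,\widehat{F}_d]$. Projecting to the torsion-free abelianisation $\widehat{\Z}^d$ forces $\overline{P}=0$, so $P$ lies in the closed commutator subgroup. This subgroup is a proper subgroup, so not every element is a solution, a contradiction. (Alternatively, I can quote case (a) of the discussion before Definition~\ref{def: 0-1 word}, which gives measure $0$.) Next suppose $n(w)\neq0$, $l(w)\neq0$ and $l(w)\nmid n(w)$. Then Lemma~\ref{lem: n neq 0 and l does not divide n; measure is not 1} gives $\mu((\widehat{F}_d)_{\eta_w})\neq1$, again a contradiction. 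So the only possibilities left are $n(w)=0$, or $n(w)\neq0$ with $l(w)\mid n(w)$.

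\textbf{Universality of the remaining cases.} Let $G$ be an arbitrary group and $P\in G$. Suppose $n(w)=0$. Substituting $U_i=1$ for all $i$ gives $w(P,1,\ldots,1)=P^{-n(w)}=1$, since the exponent sum of $x$ in $w$ is zero. So $P$ is a solution. Now suppose $n(w)\neq0$ and $l(w)\mid n(w)$. Write $l(w)=\sum_i c_in_i$ with integers $c_i$ (Bézout), and set $U_i:=P^{c_i\,n(w)/l(w)}$. All the $U_i$ are powers of $P$, so they commute with $P$ and with each other. Hence $w(P,\boldsymbol{U})$ equals $P$ raised to the total exponent sum, namely
\[
P^{-n(w)+\sum_i n_ic_i n(w)/l(w)}=P^{-n(w)+n(w)}=1 .
\]
This is the argument already sketched in the text (``taking as $U_i$ some appropriate powers of $P$''); I only make the choice of exponents explicit.

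The main point requiring care is the excluded case $l(w)=0$, $n(w)\neq0$. It is not literally covered by Lemma~\ref{lem: n neq 0 and l does not divide n; measure is not 1}, which assumes $l(w)\neq0$. My first choice is the abelianisation argument above, which uses that $\widehat{\Z}^d$ is torsion-free. Everything else is bookkeeping with Lemma~\ref{lem: measure closed set =1 iff C=G} and the preceding lemma.
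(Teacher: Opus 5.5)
Your proposal is correct and takes essentially the same route as the paper. The paper also shows that $\forall x:\eta_w(x)$ holds in $\widehat{F}_d$ exactly when $n(w)=0$ or $l(w)\mid n(w)$: it excludes $l(w)=0$ via the commutator-subgroup argument (case (a) before Definition~\ref{def: 0-1 word}) and excludes $l(w)\neq 0$, $l(w)\nmid n(w)$ via Lemma~\ref{lem: n neq 0 and l does not divide n; measure is not 1}, then notes that in the remaining cases suitable powers of $P$ give a solution in any group. Your explicit B\'ezout choice of the $U_i$ and your appeal to torsion-freeness of $\widehat{\Z}^d$ only spell out steps the paper leaves implicit.
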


\begin{proof}
Write $G=\widehat{F}_d$. Combining the proof of Theorem~\ref{prop: measure spectrum of E-formulae in free pro-p groups} and Lemma~\ref{lem: n neq 0 and l does not divide n; measure is not 1} we get that the sentence $\forall x:\eta_w(x)$ is true in $G$ (or, equivalently, $\mu(G_{\eta_w})=1$) if and only if $n(w)=0$ or $n(w)\neq 0$ and $l(w)$ divides $n(w)$. In these cases, every element of any group is a solution to $\eta_w$, which proves the claim.
\end{proof}

From Lemma \ref{lem: n neq 0 and l does not divide n; measure is not 1} we can also easily deduce when a word is surjective in every free profinite group of finite rank.

\begin{corollary}
    \label{cor: surjective words}
Let $G:=\widehat{F}_d$ with $d\ge 2$ and let $\eta_w$ be an equation formula related to the word $w$. With the notation introduced in this section, $\mu((G)_{\eta_w})=1$ if and only if one of the following conditions is satisfied: 
\begin{enumerate}
    \item $n(w)=0$, or 
    \item $n(w)\neq 0$ and $l(w)\mid n(w)$.
\end{enumerate}

In particular, if we consider a word formula of the form $$\exists\boldsymbol{y}: x=\widehat{w}(\boldsymbol{y})=:W(\boldsymbol{y})\prod_{j}\Gamma_j(\boldsymbol{y}),$$ where $\widehat{w}$ is a word and $W$ and $\Gamma_j$ are a suitable word and product of commutators obtained from $\widehat{w}$, then $\mu(G^{\lbrace \widehat{w}+\rbrace})=1$ if and only if the greatest common divisor $l(\widehat{w})$ of the exponents of the $y_i$ in $W(\boldsymbol{y})$ is equal to one. 
\end{corollary}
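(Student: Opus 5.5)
The plan is to prove the equivalence for $\mu(G_{\eta_w})=1$ directly from the earlier case analysis, and then deduce the statement about word images by applying it to a suitable equation.

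\emph{Sufficiency.} Suppose $n(w)=0$. Then the substitution $U_i=1$ solves $w(P,\boldsymbol{U})=1$ for every $P$. Suppose instead $n(w)\neq0$ and $l(w)\mid n(w)$. By B\'ezout, pick integers $c_i$ with $\sum_i c_i n_i=l(w)$, and set $U_i=P^{c_i n(w)/l(w)}$. All the $U_i$ are powers of $P$, so they commute with $P$ and with each other. Hence every commutator $\Gamma_j(P,\boldsymbol{U})$ is trivial, and $W(\boldsymbol{U})=P^{n(w)}$. Using \eqref{eq: rewriting w}, this gives $w(P,\boldsymbol{U})=1$. In both cases $G_{\eta_w}=G$, so $\mu(G_{\eta_w})=1$.

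\emph{Necessity.} Assume neither (1) nor (2) holds, so $n(w)\neq0$ and $l(w)\nmid n(w)$. If $l(w)=0$, every solution $P$ satisfies $P^{n(w)}\in[G,G]$. The abelianisation $G_{\mathrm{ab}}\cong\widehat{\Z}^d$ is torsion-free, so $P\in\overline{[G,G]}$, which has infinite index; hence $\mu(G_{\eta_w})=0\neq1$. If $l(w)\neq0$, Lemma \ref{lem: n neq 0 and l does not divide n; measure is not 1} gives $\mu(G_{\eta_w})\neq1$ directly. As a consistency check, Lemma \ref{lem: G_eta closed, G_iota open} and Lemma \ref{lem: measure closed set =1 iff C=G} show that $\mu(G_{\eta_w})=1$ is the same as $G_{\eta_w}=G$, matching the sufficiency argument.

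\emph{The ``in particular'' clause.} Apply the equivalence to the word $w(x,\boldsymbol{y}):=x^{-1}\widehat{w}(\boldsymbol{y})$, so that $G_{\eta_w}=G^{\{\widehat{w}+\}}$. For this word $n(w)=1\neq0$, so condition (1) fails. The exponent sums of the $y_i$ in $w$ are those of $\widehat{w}$, so $l(w)=l(\widehat{w})$. Condition (2) then reads $l(\widehat{w})\mid1$. Since $l$ is a nonnegative gcd, this holds exactly when $l(\widehat{w})=1$; the case $l=0$ is excluded because $0\nmid 1$.

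The main obstacle is the $l(w)=0$ subcase of necessity, which Lemma \ref{lem: n neq 0 and l does not divide n; measure is not 1} does not cover. It must be handled by hand using the infinite index of the closed commutator subgroup and the torsion-freeness of $G_{\mathrm{ab}}$. One should also take care with sign conventions, making sure $n(w)$ is computed as minus the exponent sum of $x$.
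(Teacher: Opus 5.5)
Your proposal is correct and follows the paper's route. The paper likewise gets sufficiency by substituting $U_i=1$ or suitable powers of $P$, and necessity from the case (a) commutator-subgroup argument in the proof of Theorem~\ref{thm:0-1_intro} combined with Lemma~\ref{lem: n neq 0 and l does not divide n; measure is not 1}; it gets the second claim by applying the first part to $x^{-1}\widehat{w}(\boldsymbol{y})$. Your B\'ezout choice of exponents and your handling of the $l(w)=0$ subcase via torsion-freeness of $\widehat{\Z}^d$ just make explicit details the paper leaves implicit.
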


\begin{proof}
  The first part of the corollary follows immediately from the proof of Theorem \ref{prop: measure spectrum of E-formulae in free pro-p groups} and from Lemma \ref{lem: n neq 0 and l does not divide n; measure is not 1}.

  For the second claim, apply the first part of the corollary to the formula $\exists\boldsymbol{y}: x^{-1}\widehat{w}(\boldsymbol{y})=1$.
\end{proof}

In the pro-$p$ case there is one further condition that implies that a word is surjective.

\begin{lemma}
\label{lem: surjectivity conditions for words in free pro-p}
Let $G_p:=H_{p,d}$ for $d\ge 2$.

\begin{enumerate}
\item Let $\eta_w$ be an equation formula related to the word $w$. With the notation introduced in this section, if $\mu((G_p)_{\eta_w})=1$, then one of the following conditions is satisfied: 

\begin{enumerate}
\item $n(w)=0$, or 
\item $n(w)\neq 0$ and $l(w)\mid n(w)$, or
\item $n(w)\neq 0$, $l(w)\nmid n(w)$  and $p\nmid \gcd(1-n(w),l(w))$.
\end{enumerate}

\item If we consider a formula $\varphi$ of the form $$\exists\boldsymbol{y}: x^{n(\widehat{w})}=\widehat{w}(\boldsymbol{y})=:W(\boldsymbol{y})\prod_{j}\Gamma_j(\boldsymbol{y}),$$ then $\mu((G_p)_\varphi)=1$ if, denoting by $l(\widehat{w})$ the greatest common divisor of the exponents of the $y_i$ in $W(\boldsymbol{y})$, one of the following conditions is satisfied:

\begin{enumerate}
\item[(d)] $n(\widehat{w})=0$, or 
\item[(e)] $n(\widehat{w})\neq 0$ and $l(\widehat{w})\mid n(\widehat{w})$, or
\item[(f)] $n(\widehat{w})\neq 0$, $l(\widehat{w})\nmid n(\widehat{w})$  and $p\nmid l(\widehat{w})$.
\end{enumerate}

\item In particular, if $n(\widehat{w})=1$, we obtain that $\mu(G_p^{\lbrace\widehat{w}+\rbrace})=1$ if and only if $p\nmid l(\widehat{w})$. 
\end{enumerate}
\end{lemma}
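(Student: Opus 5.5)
The plan is to handle the three parts separately. Throughout, I would use Lemma~\ref{lem: measure closed set =1 iff C=G}: since $(G_p)_{\eta_w}$ and $(G_p)_\varphi$ are closed by Lemma~\ref{lem: G_eta closed, G_iota open} and Lemma~\ref{lem: sets defined by positive existential formulae are measurable}, having measure $1$ is the same as every element of $G_p$ being a solution. The only tool beyond this is the Frattini quotient $G_p/\Phi(G_p)\cong \mathbb{F}_p^d$, as in Lemma~\ref{lem:non-surj}.

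For part (1) I argue by contraposition. Assume $n(w)\neq 0$, $l(w)\nmid n(w)$ and $p\mid \gcd(1-n(w),l(w))$; this also covers $l(w)=0$, where the gcd is $|1-n(w)|$. Take any solution $(P,\boldsymbol{U})$, written via \eqref{eq: rewriting w} as $P^{n(w)}=\prod_i U_i^{n_i}\prod_j\Gamma_j(P,\boldsymbol{U})$.
\begin{itemize}
\item Reducing modulo $\Phi(G_p)$ and writing additively, the commutators vanish, giving $n(w)\overline{P}=\sum_i n_i\overline{U}_i$.
\item Since $p\mid l(w)$, every $n_i$ is $0$ in $\mathbb{F}_p$, so the right-hand side is $0$.
\item Since $p\mid 1-n(w)$, we have $n(w)\equiv 1 \pmod p$, hence $\overline{P}=0$, that is, $P\in\Phi(G_p)$.
\end{itemize}
So $(G_p)_{\eta_w}\subseteq\Phi(G_p)$, a proper closed subgroup because $d\ge 2$, and the measure is at most $p^{-d}<1$.

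For part (2) I show that every $P\in G_p$ satisfies $\varphi$, in each of the three cases.
\begin{itemize}
\item In case (d), take all $U_i=1$: both sides are $1$, since $x^{0}=1$ and $\widehat{w}(1,\ldots,1)=1$.
\item In case (e), Bézout gives integers $c_i$ with $\sum_i c_in_i=n(\widehat{w})$. Put $U_i=P^{c_i}$. These pairwise commute, so $\widehat{w}(\boldsymbol{U})=P^{\sum_i c_in_i}=P^{n(\widehat{w})}$.
\item In case (f), $p\nmid l(\widehat{w})$ means some exponent sum $n_i$ is nonzero mod $p$. Then the induced map on $G_p/\Phi(G_p)$ is surjective, and by Lemma~\ref{lem:non-surj}(1) the word map $\widehat{w}$ is surjective on $G_p$. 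In particular $P^{n(\widehat{w})}$ lies in its image.
\end{itemize}

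For part (3) let $n(\widehat{w})=1$. If $p\nmid l(\widehat{w})$, part (2) (case (e) when $l=1$, case (f) otherwise) gives full measure. Conversely, suppose $p\mid l(\widehat{w})$, including the case $l(\widehat{w})=0$. Apply part (1) to $w(x,\boldsymbol{y})=x^{-1}\widehat{w}(\boldsymbol{y})$, which has $n(w)=1$ and $l(w)=l(\widehat{w})\neq 1$. Condition (a) fails since $n(w)=1$, and (b) fails since $l(w)\nmid 1$. Condition (c) fails because $\gcd(0,l(w))=l(w)$ is divisible by $p$. Hence the measure is not $1$.

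The main point needing care is the bookkeeping of degenerate cases ($l=0$ and the convention $\gcd(0,l)=l$) and checking that the commutator factors really vanish, both modulo $\Phi$ and when all $U_i$ are powers of $P$. I do not expect a genuine obstacle here.
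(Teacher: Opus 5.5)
Your proposal is correct and follows essentially the paper's proof. In part (1) you pass to an abelian quotient in which every exponent sum is divisible by $p$; you use the Frattini quotient directly, where the paper uses the auxiliary word $\tilde w$ and the abelianisation. In part (2)(f) you quote Lemma~\ref{lem:non-surj}(1), where the paper builds the solution explicitly from an $l(\widehat w)$-th root of $P^{n(\widehat w)}$ and Bézout coefficients; part (3) combines the two as in the paper, and your uniform handling of the degenerate case $l=0$ is sound.
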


\begin{proof}
\quad
   
 \begin{enumerate}[leftmargin=7mm]

 \item Assume that $\mu((G_p)_{\eta_w})=1$, and suppose that none of the three given conditions is satisfied. The only remaining possibility (excluding the case $W\equiv 1$, or, equivalently, $l(w)=0$, which leads to measure zero), is $n(w)\neq 0$, $l(w)\nmid n(w)$ and $p\mid \gcd(1-n(w),l(w))$. 

 Recall that, if $P$ is a solution to $\eta_w$, then $$P^{n(w)}=W(\boldsymbol{U})\prod_{j=1}^{N(w)}{\Gamma_j(P,\boldsymbol{U})},$$ from which we obtain
 $$P=\tilde{w}(P,\boldsymbol{U})=P^{1-n(w)}W(\boldsymbol{U})\prod_{j=1}^{N(w)}{\Gamma_j(P,\boldsymbol{U})},$$
 where we use the same notation as in \eqref{eq: w tilde}.

 Since $p\mid \gcd(1-n(w),l(w))$, the projection of the verbal set $G_p^{\lbrace\tilde{w}+\rbrace}$ onto the abelianisation of $G_p$ does not have measure one, and therefore the measure of $G_p^{\lbrace\tilde{w}+\rbrace}$ is not one. Because $(G_p)_{\eta_w}\subseteq G_p^{\lbrace\tilde{w}+\rbrace}$, it follows that $\mu((G_p)_{\eta_w})\neq~1$.
 
 \item 
We already know that conditions $(d)$ and $(e)$ imply $\mu((G_p)_{\varphi})=1$. 
  
 Hence, suppose that condition $(f)$ is satisfied. By definition of $l(\widehat{w})$, choosing appropriate multiples $\alpha_1,\ldots,\alpha_k$ of the exponents $n_1,\ldots, n_k$ of the bounded variables $y_1,\ldots,y_k$, one obtains $l(\widehat{w})=\alpha_1n_1+\cdots+\alpha_kn_k$. Since $p\nmid l(\widehat{w})$ the map $z\mapsto z^{l(\widehat{w})}$ is surjective on $G_{p}$. Therefore, for every $P\in G_p$, there exists $Q\in G_p$ such that, setting $U_i:=Q^{\alpha_i}$ for every $i\in\lbrace 1,\ldots, k\rbrace$, one obtains that $P^{n(w)}=\widehat{w}(\boldsymbol{U})=W(\boldsymbol{U})\prod_{j}{\Gamma_j(\boldsymbol{U})}=W(\boldsymbol{U})=Q^{l(\widehat{w})}$. It follows that every $P\in G_p$ satisfies $\varphi$, which yields the claim.

 \item The last statement follows directly combining (1) and (2) with $n(w)=n(\widehat{w})=1$.
\end{enumerate}
\end{proof}

We conclude with an observation comparing the Haar measure of definable sets in free profinite groups with a different way of measuring the size of definable sets in abstract free groups.

\begin{remark}
The situation for profinite groups differs from the classical situation studied for discrete free groups. Recall the following definition (\cite[Definition 19]{KM}): let $F$ be a (discrete) free group with generating set $X$. A subset $S\subseteq F$ is \emph{generic} if 
    $$\lim_{n\rightarrow \infty}\frac{\vert S\cap B_n(X)\vert}{\vert B_n(X)\vert}=1,$$
where $B_n(X)$ is the ball of radius $n$ in the Cayley graph of $F$. A set $S\subseteq F$ is \emph{negligible} if its complement is generic.

By \cite[Corollary 18]{KM}, a definable set in a free group is either negligible or generic. Therefore, in this setting definable sets have density zero or one in abstract free groups, while we have seen that in free profinite groups there are definable sets of positive Haar measure different from zero and one. 
\end{remark}

\section{Measure of definable sets in abelian groups}

\subsection{Gap in the measure of definable sets of abelian groups}

Let $\mathcal{PI}$ be the family of identitites of the form $w(\boldsymbol{g})=1$, where $w$ is a twisted word map in $d$ variables (see \cite[Section 2]{M} for the definition). Then, according to \cite[Theorem 3.3]{M}, if $G$ is a nilpotent profinite group of class $k$, $\mathrm{Spec}_\mu(\mathcal{PI}, G^d)\subseteq [0, \frac{2^k-1}{2^k}]\cup\lbrace 1\rbrace$. 
This is an example of \emph{gap} in the $\mathcal{PI}$- spectrum of $G^d$: if $\mu_{G^d}(\lbrace \boldsymbol{g}\in G^d: w(\boldsymbol{g})=1\rbrace)>\frac{2^k-1}{2^k}$, then every tuple $\boldsymbol{g}$ of $G^d$ satisfies $w(\boldsymbol{g})=1$ (and hence $w$ is an identity in $G$). In the next example we see that, in general, this gap does not hold true for formulae. We look at abelian groups, where $\frac{2^k-1}{2^k}=\frac{1}{2}$.

\begin{example}
    Let $C_6$ be the finite cyclic group of order $6$, and, using additive notation, consider the formula $\varphi(x):= (2x=0)\vee (3x=0)$. Then $\vert (C_6)_\varphi\vert = 4$ and $\mu((C_6)_\varphi)=\frac{\vert (C_6)_\varphi\vert}{\vert C_6\vert}=\frac{2}{3}>\frac{1}{2}$, but not every element of $C_6$ satisfies $\varphi$. 
\end{example}

More generally, we imitate the previous example to prove that no gap can exist when considering the class of all formulae (with one free variable), evaluated in abelian groups.

\begin{proposition}
    Let $\mathscr{A}$ be the class of abelian profinite groups. Then there is no real number $\gamma\in [0,1]$ such that, for every formula $\varphi$ and every $G\in\mathscr{A}$, $\mu(G_\varphi)>\gamma$ implies $\mu(G_\varphi)=1$. 
\end{proposition}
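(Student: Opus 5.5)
We will show that for every $\gamma<1$ there are a finite cyclic group $C_n$ (finite groups are profinite) and a formula $\varphi$ with $\gamma<\mu((C_n)_\varphi)<1$. This generalises the example with $C_6$: we keep the same shape of formula, a disjunction of torsion conditions. Concretely, take distinct primes $p_1,\dots,p_r$, set $n=p_1\cdots p_r$, and let
$$\varphi(x):=\bigvee_{i=1}^r \Bigl(\tfrac{n}{p_i}\,x=0\Bigr),$$
written multiplicatively as $x^{n/p_i}=1$. This is a quantifier-free formula in one free variable without parameters, so it is admissible.

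Next we compute the measure. In $C_n$, an element $x$ satisfies $\frac{n}{p_i}x=0$ exactly when the order of $x$ divides $n/p_i$, that is, when $p_i\nmid \mathrm{ord}(x)$. Hence $x\notin (C_n)_\varphi$ if and only if every $p_i$ divides $\mathrm{ord}(x)$, i.e.\ $\mathrm{ord}(x)=n$, i.e.\ $x$ is a generator. The number of generators is $\phi(n)=\prod_i(p_i-1)$. Therefore
$$\mu((C_n)_\varphi)=1-\prod_{i=1}^r\Bigl(1-\frac1{p_i}\Bigr).$$
This value is strictly less than $1$, since generators exist and so $(C_n)_\varphi\neq C_n$. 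The case $r=2$, $p_1=2$, $p_2=3$ recovers the $C_6$ example, with measure $1-\tfrac12\cdot\tfrac23=\tfrac23$.

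Finally we make the measure exceed any given $\gamma<1$. Take $p_1,\dots,p_r$ to be the first $r$ primes. Then $\prod_{p\le p_r}(1-1/p)\to 0$ as $r\to\infty$; this is Euler's theorem, equivalent to the divergence of $\sum 1/p$, or to the divergence of $\prod(1-1/p)^{-1}\ge\sum_{m\le p_r}1/m$. So for $r$ large enough, $\gamma<\mu((C_n)_\varphi)<1$, and no such gap $\gamma$ can exist. The case $\gamma=1$ is trivial, since a measure can never exceed $1$. The only step that needs care is the divergence of the Mertens-type product, and that is classical. We could even avoid the analytic fact by taking $C_n$ with $n=p^a$: the formula $x^{p^{a-1}}=1$ then has measure $1/p$, which does not help. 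So the product of many primes really is needed, and we will simply cite Euler's result.
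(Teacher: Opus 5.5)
Your proof is correct, and it departs from the paper's at exactly the point where the paper's argument fails. The paper uses only two primes. It takes $G(p,q)=C_p\times C_q\cong C_{pq}$ with $\varphi_{p,q}=(px=0)\vee(qx=0)$, which is your formula for $r=2$ (since $n/p_1=p_2$ and $n/p_2=p_1$). It then asserts $\limsup_{p,q\to\infty}(1-\phi(pq)/pq)=1$. But $1-\phi(pq)/pq=\frac{1}{p}+\frac{1}{q}-\frac{1}{pq}\to 0$ as $p,q\to\infty$. Over all pairs of distinct primes this quantity never exceeds $\frac{2}{3}$ (attained by $C_6$), so the two-prime family only rules out gaps $\gamma<\frac{2}{3}$. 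What is needed is to let the \emph{number} of primes grow rather than their size, and to invoke the divergence of $\prod_p(1-1/p)^{-1}$; that is precisely what you do. Your identification of $(C_n)_\varphi$ with the non-generators of $C_n$ for squarefree $n$ is right. So is your elementary justification of Euler's theorem via $\prod_{p\le N}(1-1/p)^{-1}\ge\sum_{m\le N}1/m$.

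Two smaller remarks. First, for $\gamma=1$ the implication ``$\mu(G_\varphi)>1\Rightarrow\mu(G_\varphi)=1$'' holds vacuously, so $\gamma=1$ \emph{is} such a number. The proposition is therefore only true when read with $\gamma\in[0,1)$, and your sentence on this case gets the logic backwards; the paper's statement has the same defect. Second, your closing claim that many primes are ``really needed'' holds only inside your family of disjunctions of equations. For arbitrary formulas, as the statement allows, $\varphi(x):=\neg(x=0)$ has measure $1-\frac{1}{N}$ in $C_N$, which proves the proposition with no number theory at all. What your construction buys beyond this is the stronger fact that there is no gap even for positive quantifier-free formulas, and there the divergence of $\sum_p 1/p$ is genuinely needed. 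Indeed, in a finite abelian group $G$ such a formula defines a union of subgroups $G[m]$. If this union is not all of $G$, it misses every element of order $\exp(G)$, and these have density at least $\prod_{p\mid \exp(G)}(1-1/p)$.
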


\begin{proof}
    For any natural number $n$ let $C_n$ be the cyclic group of order $n$. Let $p$ and $q$ be two distinct primes and consider the group $G(p,q):=C_p\times C_q$ of order $n:=pq$. Let $\varphi_{p,q}$ be the formula $(px=0)\vee (qx=0)$. Denoting by $\phi$ the Euler's function, the elements satisfying this formula are exactly the non-generators of $G(p,q)$, which are $n-\phi(n)$. It follows that $\mu(G(p,q)_{\varphi_{p,q}})=\frac{n-\phi(n)}{n}=1-\frac{\phi(n)}{n}$. We conclude that $\limsup_{p,q\to\infty}{\mu(G(p,q)_{\varphi_{p,q}})}=1$, which implies that no gap exists.
\end{proof}

\subsection{\texorpdfstring{$\mathcal{F}$}{F}-spectrum in finitely generated abelian groups}
\label{sec: measures in fg abelian groups}
We use additive notation.
By quantifier elimination, it is known that definable subsets of abelian groups are boolean combinations of the sets defined by the formulae $\delta_n:=\exists y: x=ny$ and $\omega_m:=mx=0$, for integers $n$ and $m$ (\cite[Theorem A.2.2]{Hod93}). If $G$ is a finitely generated abelian group, then $G\cong\Z^r\oplus T$, where $r$ is the rank of $G$ and $T$ its torsion subgroup. Clearly $G_{\delta_n}=nG$ and $G_{\omega_m}=\lbrace g\in G\mid mg=0\rbrace$ are subgroups of $G$. It follows that the measures of these definable sets correspond to their indices in $G$.

For simplicity, assume that $G$ is finitely generated of rank $r$ and torsion-free. Given a formula $\varphi$, we will give an explicit expression for $\mu(G_\varphi)$. 

Note that $\mu(G_{\delta_n})=\frac{1}{n^r}$, $\mu(G_{\neg\delta_n})=1-\frac{1}{n^r}$, $\mu(G_{\omega_m})=0$ and $\mu(G_{\neg\omega_m})=1$. In particular, in the class of finitely generated abelian groups, there are definable sets with positive measure smaller than 1. 

Now we consider what happens when considering boolean combinations of sets of the previous form. Writing a formula as a finite disjunction of finite conjunctions of formulae, it is enough to compute the measure of all possible intersections of formulae.

Let $n_1,\ldots, n_k$ be positive natural numbers. Then $$G_{\delta_{n_1}}\cap\cdots\cap G_{\delta_{n_k}}=G_{\delta_{\mathrm{lcm}(n_1,\ldots, n_k)}},$$ where $\mathrm{lcm}(n_1,\ldots,n_k)$ is the least common multiple of $n_1,\ldots, n_k$. Moreover, for each positive natural numbers $n, m$, $\mu(G_{\delta_n}\cap G_{\omega_m})=\mu(\lbrace 0\rbrace)=0$ and $\mu(G_{\delta_n}\cap G_{\neg\omega_m})=\mu(G_{\delta_n})=\frac{1}{n^r}$.  

Now let $n_1,\ldots, n_k$ be positive natural numbers and consider the definable sets $S:=G_{\neg\delta_{n_1}\wedge\cdots\wedge\neg\delta_{n_k}}$. This set is the complement of the set $M:=G_{\delta_{n_1}\vee\cdots\vee\delta_{n_k}}=G_{\delta_{n_1}}\cup\cdots\cup G_{\delta_{n_k}}$. By the inclusion-exclusion principle (\cite[Lemma 21.3.1 (a)]{FJ}), the measure of $M$ is given by
\begin{align*}
\mu(M)&=\sum_{i=1}^k{(-1)^{i-1}\sum_{1\leq j_1<\cdots<j_i\leq k}{\mu(G_{\delta_{n_{j_1}}}\cap\cdots\cap G_{\delta_{n_{j_i}}})}}\\
      &=\sum_{i=1}^k{(-1)^{i-1}\sum_{1\leq j_1<\cdots<j_i\leq k}{\mu(G_{\delta_\mathrm{lcm}(n_{j_1},\ldots, n_{j_i})})}}\\
      &=\sum_{i=1}^k{(-1)^{i-1}\sum_{1\leq j_1<\cdots<j_i\leq k}{\frac{1}{\mathrm{lcm}(n_{j_1},\ldots, n_{j_i})^r}}}.
\end{align*}

It follows that
$$\mu(S)=1-\mu(M)=1+\sum_{i=1}^k{(-1)^{i}\sum_{1\leq j_1<\cdots<j_i\leq k}{\frac{1}{\mathrm{lcm}(n_{j_1},\ldots, n_{j_i})^r}}}.$$
Next, we compute by induction $\mu(G_{\delta_n}\cap G_{\neg\delta_{m_1}}\cap\cdots\cap G_{\neg\delta_{m_k}})$ for $k\ge 1$. 

If $k=1$, 
note that $G_{\delta_n}\cap~ G_{\neg\delta_m}=G_{\delta_n}\setminus G_{\delta_{\mathrm{lcm}(n,m)}}$, from which we conclude that $$\mu(G_{\delta_n}\cap G_{\neg\delta_m})=\mu(G_{\delta_n})-\mu(G_{\delta_{\mathrm{lcm}(n,m)}})=\frac{1}{n^r}-\frac{1}{\mathrm{lcm}(n,m)^r}.$$ 

Now let $k>1$. To simplify the notation, denote $G_{m_i}:=G_{\delta_n}\setminus G_{\delta_{\mathrm{lcm}(n,m_i)}}$ for every $i$. Assume by induction that, for every $i\le k$,

$$\mu(G_{{m_{j_1}}}\cap\cdots\cap G_{{m_{j_i}}})=\frac{1}{n^r}+\sum_{t=1}^i{(-1)^{t}\sum_{\substack{{j_1\leq u_1<\cdots<u_t\leq j_i}\\ u_l\in\lbrace j_1,\ldots,j_i\rbrace}}{\frac{1}{\mathrm{lcm}(n,m_{u_1},\ldots, m_{u_t})^r}}}$$

and denote this quantity by $I(j_1,\ldots,j_i)$. Furthermore, let $J(j_1,\ldots, j_i):=I(j_1,\ldots, j_i)-\frac{1}{n^r}$.

Moreover, it is not difficult to see that $$G_{{m_1}}\cup\cdots\cup G_{{m_{k+1}}}=G_{\delta_n}\setminus G_{\mathrm{lcm}(n,m_1,\ldots,m_{k+1})}$$ and therefore
   $$\mu(G_{{m_1}}\cup\cdots\cup G_{{m_{k+1}}})=\frac{1}{n^r}-\frac{1}{\mathrm{lcm}(n,m_1,\ldots, m_{k+1})^r}.$$ 

Write $M_{k+1}:=(-1)^{k} \mu(G_{\delta_n}\cap G_{\neg\delta_{m_1}}\cap\cdots\cap G_{\neg\delta_{m_{k+1}}})$.
Then, applying the inclusion-exclusion principle we obtain:
\begin{align*}
   M_{k+1}&=(-1)^{k} \mu( G_{{m_1}}\cap\cdots\cap G_{{m_{k+1}}})\\
   &=\mu(G_{{m_1}}\cup\cdots\cup G_{{m_{k+1}}})+\sum_{i=1}^{k}(-1)^{i}\sum_{1\leq j_1<\cdots<j_i\leq k+1}{\mu(G_{{m_{j_1}}}\cap\cdots\cap G_{{m_{j_i}}})}\\
   &=\frac{1}{n^r}-\frac{1}{\mathrm{lcm}(n,m_1,\ldots, m_{k+1})^r}+\sum_{i=1}^{k}(-1)^{i}\sum_{1\leq j_1<\cdots<j_i\leq k+1}{I(j_1,\ldots, j_i)}\\
   &=(-1)^k\frac{1}{n^r}-\frac{1}{\mathrm{lcm}(n,m_1,\ldots, m_{k+1})^r}+\sum_{i=1}^{k}(-1)^{i}\sum_{1\leq j_1<\cdots<j_i\leq k+1}{J(j_1,\ldots, j_i)}.
   \end{align*}

   Note that, in the last summand of the previous sum, for every $l\in\lbrace 1,\ldots, k+1\rbrace$, the coefficient of $\frac{1}{\mathrm{lcm}(n,m_{j_1},\ldots, m_{j_l})^r}$ is given by
   $$\sum_{s=l}^k{(-1)^{l+s}\binom{k+1-l}{s-l}}=\sum_{v=0}^{k-l}{(-1)^v\binom{k+1-l}{v}}=(-1)^{k-l}.$$

Therefore we obtain:  

  $$\mu(G_{{m_{1}}}\cap\cdots\cap G_{{m_{k+1}}})=\frac{1}{n^r}+\sum_{i=1}^{k+1}{(-1)^{i}\sum_{1\leq j_1<\cdots<j_i\leq k+1}{\frac{1}{\mathrm{lcm}(n,m_{j_1},\ldots, m_{j_i})^r}}},$$
  as claimed.

Finally, note that $$G_{\delta_{n_1}}\cap\cdots\cap G_{\delta_{n_l}}\cap G_{\neg\delta_{m_1}}\cap\cdots\cap G_{\neg\delta_{m_k}}=G_{\delta_{\mathrm{lcm}(n_1,\ldots, n_l)}}\cap G_{\neg\delta_{m_1}}\cap\cdots\cap G_{\neg\delta_{m_k}},$$ which is a case that we have already treated.

Putting everything together we obtain the following.

\begin{proposition}
\label{prop: measure of def sets in abelian groups}
    Let $\mathscr{A}$ be the class of torsion-free finitely generated abelian profinite groups and let $\mathcal{U}$ be the class of all formulae. Let $G\in\mathscr{A}$ have rank $r$ and let $\varphi$ be the formula
    $$\bigvee_{i=1}^s \left(\delta_{n^{(i)}_{1}}\wedge\cdots\wedge\delta_{n^{(i)}_{l_i}}\wedge\neg\delta_{m^{(i)}_{1}}\wedge\cdots\wedge\neg\delta_{m^{(i)}_{k_i}}\right)$$
    for some positive integer $s$ and some non-negative integers $l_i, k_i$ (at least one positive) for every $i\in\lbrace 1,\ldots, s\rbrace$. For each $i\in\lbrace 1,\ldots, s\rbrace$ denote 
    $$\varphi_i:=\delta_{n^{(i)}_{1}}\wedge\cdots\wedge\delta_{n^{(i)}_{l_i}}\wedge\neg\delta_{m^{(i)}_{1}}\wedge\cdots\wedge\neg\delta_{m^{(i)}_{k_i}}.$$
    Then, setting $n_i:=\mathrm{lcm}(n^{(i)}_{1},\ldots, n^{(i)}_{l})$,
    $$
    \mu(G_{\varphi_i})=
    \begin{cases}
    \frac{1}{n_i^r}+\sum_{s=1}^{k_i}{(-1)^{s}\sum_{1\leq j_1<\cdots<j_s\leq k_i}{\frac{1}{\mathrm{lcm}(n_i,m_{j_1},\ldots, m_{j_i})^r}}}\ \ &\text{if}\ \ l_i,k_i\neq 0,\\
    \frac{1}{n_i^r}\ \ &\text{if}\ \ k_i= 0,\\
    1+\sum_{s=1}^{k_i}{(-1)^{s}\sum_{1\leq j_1<\cdots<j_s\leq k_i}{\frac{1}{\mathrm{lcm}(m_{j_1},\ldots, m_{j_i})^r}}} \ \ &\text{if}\ \ l_i = 0,
    \end{cases}
    $$
    
    and $$\mu(G_\varphi)=\sum_{i=1}^s{(-1)^{i-1}\sum_{1\leq j_1<\cdots<j_i\leq s}{\mu(G_{\varphi_{j_1}}\cap\cdots\cap G_{\varphi_{j_i}}}}).$$
    In particular, $\mathrm{Spec}_\mu(\mathcal{U}, \mathscr{A})\subseteq \Q$.
\end{proposition}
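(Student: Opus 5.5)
The plan is to assemble the computations carried out just before the statement. First, by quantifier elimination for abelian groups (\cite[Theorem A.2.2]{Hod93}), every formula $\varphi$ is equivalent in $G$ to a boolean combination of the $\delta_n$ and $\omega_m$, and we put it in disjunctive normal form. Since $G\cong\Z^r$ is torsion-free, $G_{\omega_m}=\{0\}$ for $m\neq 0$ and $G_{\neg\omega_m}=G$. Hence any conjunct containing some $\omega_m$ defines a subset of $\{0\}$, which has measure zero. A conjunct containing $\neg\omega_m$ agrees with the conjunct obtained by deleting $\neg\omega_m$. Neither change affects the measure of the union, so after discarding null pieces (by inclusion–exclusion, null sets do not contribute) we may assume $\varphi$ has the displayed form $\bigvee_i\varphi_i$ with each $\varphi_i$ a conjunction of $\delta$'s and $\neg\delta$'s.

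Second, I compute $\mu(G_{\varphi_i})$ using $G_{\delta_{n_1}}\cap\cdots\cap G_{\delta_{n_l}}=G_{\delta_{n_i}}$ with $n_i=\mathrm{lcm}$, which reduces to a single $\delta_{n_i}$. If $k_i=0$, then $G_{\varphi_i}=n_iG$ has index $n_i^r$, so $\mu(G_{\varphi_i})=1/n_i^r$ by \cite[Lemma 21.1.1]{FJ}. If $l_i=0$, we take the complement of $\bigcup_j G_{\delta_{m_j}}$ and apply inclusion–exclusion together with the lcm rule for intersections; this is exactly the formula derived above for $\mu(S)$. If both $l_i,k_i\neq 0$, the inductive computation of $\mu(G_{\delta_n}\cap G_{\neg\delta_{m_1}}\cap\cdots\cap G_{\neg\delta_{m_k}})$ gives the first case. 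Equivalently, and more cleanly, I would note that $G_{\varphi_i}=n_iG\setminus\bigcup_j\mathrm{lcm}(n_i,m_j)G$, and apply inclusion–exclusion inside $n_iG$, using $\bigcap_{j\in J}\mathrm{lcm}(n_i,m_j)G=\mathrm{lcm}(n_i,m_J)G$. This double-checks the induction.

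Third, the formula for $\mu(G_\varphi)$ is the inclusion–exclusion principle for finite unions of measurable sets (\cite[Lemma 21.3.1]{FJ}); each $G_{\varphi_i}$ is a boolean combination of open-closed subgroups and hence measurable. To finish rationality, I observe that each intersection $G_{\varphi_{j_1}}\cap\cdots\cap G_{\varphi_{j_t}}$ is again a conjunction of $\delta$'s and $\neg\delta$'s, i.e.\ defined by a formula of the same type. By the second step, its measure is a finite signed sum of terms $1/N^r$ with $N\in\N$, and hence rational. Thus $\mu(G_\varphi)\in\Q$, and so $\mathrm{Spec}_\mu(\mathcal U,\mathscr A)\subseteq\Q$.

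The main obstacle is bookkeeping rather than ideas: making sure the normal-form reduction is legitimate. In particular, the ``closure'' subtlety must be handled: $G$ here is a finitely generated torsion-free abelian \emph{profinite} group, i.e.\ $\widehat{\Z}^r$-like, and quantifier elimination is applied to it directly, with indices $[G:nG]=n^r$. The inductive sign computation also needs care, but the direct inclusion–exclusion inside $n_iG$ sidesteps it.
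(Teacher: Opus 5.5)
Your proposal is correct and follows the paper's route (quantifier elimination, $G_{\delta_{n_1}}\cap\cdots\cap G_{\delta_{n_l}}=G_{\delta_{\mathrm{lcm}(n_1,\ldots,n_l)}}$, and inclusion--exclusion); applying inclusion--exclusion directly to $G_{\varphi_i}=n_iG\setminus\bigcup_j\mathrm{lcm}(n_i,m_j)G$ is a cleaner replacement for the paper's induction and binomial identity, and it also gives the case $l_i=0$ by taking $n_i=1$. Two harmless slips: $G_{\neg\omega_m}=G\setminus\{0\}$ rather than $G$, which only changes things by a null set; and $[G:nG]=n^r$ holds exactly when $G\cong\widehat{\Z}^r$ (e.g.\ $\mu(3\Z_2^r)=1\neq 3^{-r}$), an assumption the paper also makes tacitly, though rationality holds for every torsion-free finitely generated abelian profinite group because all sets involved are boolean combinations of finitely many open subgroups of finite index.
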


\begin{remark}
    Note that the sets $G_{\varphi_{j_1}}\cap\cdots\cap G_{\varphi_{j_i}}$ are again intersections of the form $G_{\varphi_i}$ and therefore their measure can be computed using the inclusion-exclusion principle as before.
    
\end{remark}

\appendix

\section{Further results on \texorpdfstring{$\mathcal{F}$}{F}-spectra in profinite groups}
\label{sec: appendix A further results}

\subsection{Subsets of profinite groups of arbitrary measure}

All examples of definable sets of positive measure different from one presented in this paper have rational measure. Therefore the following is a natural question, that was posed to us by Steffen Kionke.

\begin{question}
\label{quest:irrational measure}
    Are there definable sets in profinite groups with irrational measure?
\end{question}

Even if we cannot answer Question \ref{quest:irrational measure}, we present here the construction of a countably-based profinite group that contains subsets of arbitrary measure $\alpha\in[0,1]$, inspired by an example in \cite[Page 2]{LP}.

Consider a countably-based profinite group and choose a filtration $G=G(0)>G(1)>G(2)>\ldots$ of open subgroups. Write $m_n = \frac{n(n+1)}{2}$ and $P_n=\lvert G(m_{n-1}) : G(m_n) \rvert.$ Hence, for every $g\in G$,
\[
  \mu(g G(m_n)) = \frac{1}{\prod_{i=1}^n P_i}.
\]
Fix a real $\alpha\in [0,1]$. Using the generalised Cantor expansion of real numbers, 
there is a sequence of integers $(d_n)_{n=1}^\infty$ such that $0\le d_n \le P_n-1$ and
\[
   \alpha = \sum_{n=1}^\infty \frac{d_n}{\prod_{i=1}^n P_i}.
\]
We now define a subset of $G$ of measure $\alpha$ in the following way.
\begin{enumerate}
    \item \textbf{Step 1:} Let $C_1$ be the union of exactly $d_1$ arbitrary cosets of $G(m_1)$ in $G$. Since $d_1 \le P_1 - 1$, there is at least one coset of $G(m_1)$ not in $C_1$.
    \item \textbf{Step $\boldsymbol{n}$:} Suppose that we defined subsets $C_1,\ldots,C_{n-1}$ of $G$ and that $C_i$ is a union of cosets of $G(m_i)$ in $G(m_{i-1})$. The complement $G \setminus \bigcup_{i=1}^{n-1} C_i$ is a union of cosets of $G(m_{n})$ in $G(m_{n-1})$. Define $C_n$ by arbitrarily choosing exactly $d_n$ of these available $G(m_n)$ cosets.
    \item Set $C = \bigcup_{n=1}^\infty C_n$. Because each coset of $G(m_n)$ has measure $\frac{1}{P_1 \cdots P_n}$, the total measure is exactly:$$\mu(C) = \sum_{n=1}^\infty \frac{d_n}{P_1 P_2 \cdots P_n} = \alpha.$$
\end{enumerate}

\subsection{\texorpdfstring{$\mathcal{F}$}{F}-spectra in free profinite groups of countable rank}

In this section we show that, if $F$ denotes the free profinite group of countable rank and $\mathcal{P}$ is the family of positive existential formulae, then $\mathrm{Spec}_\mu(F,\mathcal{P})=\lbrace 0,1\rbrace$.  

\begin{lemma}
\label{lem: definable sets in count free have measure 0-1}
Let $S$ be a closed subset of the free profinite group $F$ of countable rank. Then, either $S=F$ or $\mu(S)=0$. In particular, the same is true for the set $F_\varphi$ where $\varphi$ is a positive existential formula. 
\end{lemma}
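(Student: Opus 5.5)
Write $F$ as the free profinite group on a basis $x_1,x_2,\ldots$ converging to $1$. Suppose $S\neq F$. Since $S$ is closed, $F\setminus S$ is a non-empty open set, so it contains a coset $gN$ for some open normal subgroup $N\lhd_o F$. Set $A=gN$ and $a=\mu(A)=1/[F:N]>0$; then $S\cap A=\emptyset$. The idea is to exploit the infinitely many "independent" generators: because $N$ is open, the images of only finitely many of the $x_i$ in $F/N$ are non-trivial. We produce infinitely many independent copies of the event "lies in $A$" and use this to force $\mu(S)=0$.

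The key steps are as follows. Fix an element $s\in S$; it is better to argue with the measure directly, as follows. For each $m$, let $\sigma_m$ be the continuous automorphism of $F$ that shifts blocks of generators. Because $x_i\to 1$, $N$ contains all $x_i$ for $i>i_0$. Choose automorphisms $\alpha_1,\alpha_2,\ldots$ of $F$, each permuting the basis, that send $\{x_1,\ldots,x_{i_0}\}$ to pairwise disjoint blocks of indices. The open sets $A_j:=\alpha_j(A)$ then have measure $a$; indeed automorphisms preserve Haar measure, by uniqueness of normalized Haar measure. They are independent, because $A_j$ depends only on the coordinates in block $j$. In other words, the map $F\to \prod_j F/\alpha_j(N')$ is onto the product of the finite quotients of the free profinite groups on the disjoint blocks, and by Proposition~\ref{prop: quotient measure of a set} the pushforward of $\mu$ is the product measure. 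Hence $\mu\bigl(\bigcap_{j\le J}(F\setminus A_j)\bigr)=(1-a)^J\to 0$, so $\mu\bigl(\bigcup_j A_j\bigr)=1$.

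It remains to relate $S$ to the sets $A_j$. This is the main obstacle, because $S$ itself need not be invariant under the $\alpha_j$. For a general closed $S$ the statement can fail as literally stated (take $S=N$), so the intended content must concern sets invariant under automorphisms. Such sets include those of the form $F_\varphi$, since definable sets without parameters are preserved by all automorphisms. I would therefore prove the statement for $\mathrm{Aut}(F)$-invariant closed $S$. For such $S$, the condition $S\cap A=\emptyset$ gives $S\cap A_j=\alpha_j(S\cap A)=\emptyset$ for every $j$, so $S\subseteq F\setminus\bigcup_j A_j$, which has measure $0$. Therefore $\mu(S)=0$. For the final sentence of the lemma, Lemma~\ref{lem: sets defined by positive existential formulae are measurable} shows that $F_\varphi$ is closed, and it is automorphism-invariant because $\varphi$ has no parameters. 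This yields the $0$--$1$ dichotomy, with $\mu(F_\varphi)=1$ exactly when $F_\varphi=F$, by Lemma~\ref{lem: measure closed set =1 iff C=G}.

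The delicate points are the existence of the block-shifting automorphisms and the independence (product-measure) claim. For the automorphisms, a bijection of a basis converging to $1$ extends to a continuous automorphism, by the universal property of free profinite groups on a set converging to $1$. For independence, I would compute the measures on finite quotients, namely $F\to (F/N)^{J}$ realised via the disjoint blocks. Surjectivity of this map follows from freeness, since the blocks can be mapped independently.
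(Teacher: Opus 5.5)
Your proposal is correct, and your diagnosis of the statement is the essential point. As literally stated the lemma is false: an open normal subgroup $N$ of index $2$ (e.g.\ the kernel of $F\to C_2$ sending $x_1$ to the generator and all other $x_i$ to $1$) is closed, proper and of measure $\frac12$.

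The paper's own proof breaks exactly at the step you singled out as delicate. It asserts a surjection $\pi_{Q^m}\colon F\to Q^m$ with $\pi_i\circ\pi_{Q^m}=\pi_Q$ for every $i$. Such a map takes values in the diagonal of $Q^m$, so it cannot be onto when $\lvert Q\rvert\ge 2$ and $m\ge 2$. The natural repair is to use $\pi_Q\circ\alpha_i^{-1}$ as the $i$-th coordinate, where the $\alpha_i$ are basis permutations moving the finitely many generators not killed by $\pi_Q$ into pairwise disjoint blocks. One then only gets $\pi_{Q^m}(S)\subseteq\prod_i\pi_Q(\alpha_i^{-1}(S))$, and this can be replaced by $\pi_Q(S)^m$ when $S$ is invariant under the $\alpha_i$.

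So the repaired paper argument is your argument in slightly different clothing. It bounds $\mu(S)$ by $(\lvert\pi_Q(S)\rvert/\lvert Q\rvert)^m$, while you bound it by $(1-1/[F:N])^J$ using a single coset $gN$ in the complement. Both rest on the independence of disjoint blocks of generators together with invariance of $S$. Your corrected statement (closed and $\mathrm{Aut}(F)$-invariant, or merely invariant under finitely supported permutations of the basis) still gives the ``in particular'' clause. Every application of the lemma in the paper (verbal sets and sets $F_\varphi$ for parameter-free $\varphi$) concerns such invariant sets.

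When writing it up, delete the false start: the element $s\in S$ and the shifts $\sigma_m$ play no role. Also replace the undefined $N'$ by $N$. State the independence as surjectivity of $F\to\prod_{j\le J}F/\alpha_j(N)$, as follows. Let $B_j$ be the block of indices onto which $\alpha_j$ moves $\{1,\dots,i_0\}$. Every $x_k$ with $k\notin B_j$ equals $\alpha_j(x_i)$ for some $i>i_0$, and hence lies in $\alpha_j(N)$. By disjointness of the blocks, each $x_k$ with $k\in B_j$ therefore maps to a tuple that is trivial outside the $j$-th coordinate, and these images generate the $j$-th factor. Proposition~\ref{prop: quotient measure of a set} then gives $\mu\bigl(\bigcap_{j\le J}(F\setminus A_j)\bigr)=(1-a)^J$ directly.
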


\begin{proof}
Suppose that $S$ is a proper subset of $F$. Since $S$ is closed, there exists a finite quotient $Q$ of $F$ such that $\pi_Q(S)\subsetneq Q$ and thus $\mu_Q(\pi_Q(S)) <1$.

As $F$ has countable rank, for every integer $m$ there exists a surjective map $\pi_{Q^m}:F\to Q^m$ such that $\pi_i\circ \pi_{Q^m}=\pi_Q$, where $\pi_i: Q^m\to Q$ is the projection onto the $i$-th factor for all $i$. It follows that $\pi_{Q^m}(S)\subseteq(\pi_Q(S))^m$ and 
therefore $$\mu(S)\le \mu_{Q^m}(\pi_{Q^m}(S)) \le \mu_{Q}(\pi_{Q}(S))^m \to 0 $$
as $m$ tends to infinity. The claim about $F_\varphi$ follows from Lemma \ref{lem: sets defined by positive existential formulae are measurable}.
\end{proof}

\begin{corollary}
    Let $F$ be the free profinite group of countable rank.
    There exists no non-surjective word $w$ that has finite width in $F$ and such that $w(F)$ has finite index in $F$. 
\end{corollary}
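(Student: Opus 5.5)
The plan is to argue by contradiction, combining Lemma~\ref{lem: definable sets in count free have measure 0-1} with the width argument of Corollary~\ref{rmk: concatenation of word of finite width has positive measure}. Suppose $w$ is a non-surjective word in $k$ variables that has finite width $m$ in $F$, and suppose $w(F)$ has finite index in $F$. Since $w(F)$ is then a closed subgroup of finite index, it has positive measure $1/[F:w(F)]$. By Lemma~\ref{lem: if finite width, verbal subgroup = union of verbal sets}, $w(F)$ is the finite union of the sets $F^{\lbrace w^{\ast m,\boldsymbol{\varepsilon}}+\rbrace}$ over $\boldsymbol{\varepsilon}\in\lbrace 1,-1\rbrace^m$. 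Hence at least one of these verbal sets has positive measure.

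Each such verbal set is $F_\varphi$ for the positive existential formula $\varphi:=\exists \boldsymbol{y}: x^{-1}w^{\ast m,\boldsymbol{\varepsilon}}(\boldsymbol{y})=1$, so it is closed by Lemma~\ref{lem: sets defined by positive existential formulae are measurable}. By Lemma~\ref{lem: definable sets in count free have measure 0-1}, a closed set of positive measure must be all of $F$. Therefore $F^{\lbrace w^{\ast m,\boldsymbol{\varepsilon}}+\rbrace}=F$ for some $\boldsymbol{\varepsilon}$, and in particular $w(F)=F$.

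It remains to show that $w(F)=F$ is impossible for a non-surjective $w$; this step needs some care about what ``non-surjective'' means. Reading it as ``$w$ is not surjective on $F$'' is not enough, because that alone does not prevent $w(F)=F$. The intended meaning is presumably the paper's pro-$p$ notion: $w$ is non-surjective on some free pro-$p$ group $H_{p,d}$, equivalently all exponent sums of $w$ are divisible by $p$ (Lemma~\ref{lem:non-surj}).

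With that reading, consider any continuous surjection $F\to C_p$, which exists since $F$ has countable rank. The image of $w(F)$ in $C_p$ is generated by the $w$-values in $C_p$. By Lemma~\ref{lem:non-surj}\eqref{item: lemma non-surj non-surj word is id in elem ab p-gps}, $w$ is an identity on the elementary abelian group $C_p$, so all these values are trivial. Hence $w(F)$ lies in the kernel, which is a proper subgroup, contradicting $w(F)=F$.

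If instead ``non-surjective'' means only not surjective on $F$, the argument stops at $F^{\lbrace w^{\ast m,\boldsymbol{\varepsilon}}+\rbrace}=F$. In that case I would reduce to the pro-$p$ reading using the Remark after Corollary~\ref{cor: prop A for free profinite}: a word that is not surjective on $F$ is not surjective on some $H_{p,d}$. The main obstacle is making this reduction rigorous.
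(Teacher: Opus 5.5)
You have the right skeleton, and it is the route the paper has in mind: finite width and finite index give some concatenation $w^{\ast m,\boldsymbol{\varepsilon}}$ a closed image of positive measure. Lemma~\ref{lem: definable sets in count free have measure 0-1} then forces $F^{\lbrace w^{\ast m,\boldsymbol{\varepsilon}}+\rbrace}=F$, and hence $w(F)=F$.

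\textbf{The gap.} The problem is the last step under the natural reading of ``non-surjective'', namely that the word map $w\colon F^k\to F$ is not onto. You leave this case open, and your claim that this reading ``does not prevent $w(F)=F$'' is false. The reduction you call the main obstacle is elementary. Let $e_1,\ldots,e_k$ be the exponent sums of the variables of $w$. If $\gcd_i(e_i)=1$, choose integers $a_i$ with $\sum_i a_ie_i=1$. Then $w(g^{a_1},\ldots,g^{a_k})=g$ for every element $g$ of every group, so $w$ is surjective on $F$. Hence if $w$ is not surjective on $F$, some prime $p$ divides every $e_i$ (any prime works if all $e_i=0$). Then $w$ is an identity on $C_p$, and your argument with an epimorphism $F\to C_p$ gives $w(F)\neq F$. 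Equivalently, $w^{\ast m,\boldsymbol{\varepsilon}}$ has exponent sums $\pm e_i$ on disjoint variables, so it is again non-surjective, contradicting $F^{\lbrace w^{\ast m,\boldsymbol{\varepsilon}}+\rbrace}=F$. So the two readings you distinguish coincide, and you do not need the finite-rank results behind the Remark after Corollary~\ref{cor: prop A for free profinite}.

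\textbf{A caution about the lemma.} Lemma~\ref{lem: definable sets in count free have measure 0-1}, stated for arbitrary closed sets, is false. Your own last step gives a counterexample: $\ker(F\to C_p)$ is closed of measure $1/p$. In its proof, a map $\pi_{Q^m}$ with $\pi_i\circ\pi_{Q^m}=\pi_Q$ for all $i$ lands in the diagonal of $Q^m$ and is not onto. The conclusion does hold for closed sets invariant under continuous automorphisms of $F$. To see this, compose $\pi_Q$ with basis permutations that move the finitely many generators not killed by $\pi_Q$ into $m$ disjoint blocks; the resulting map to $Q^m$ is onto. Verbal sets and $w(F)$ are invariant in this sense, so your application survives once you say so. You could then apply the lemma directly to the closed finite-index subgroup $w(F)$ and skip the concatenations.

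In fact you can avoid the lemma entirely. Since $w$ is an identity on $C_p$ and $F$ maps onto $C_p^n$ for every $n$, $w(F)$ has index at least $p^n$ for every $n$. So $w(F)$ never has finite index, whatever its width.
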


\subsection{An \texorpdfstring{$\mathcal{F}$}{F}-spectrum containing \texorpdfstring{$\Q\cap[0,1]$}{Q[0,1]}}

In this short section we prove the following.

\begin{lemma}
Denote by $\mathcal{EP}$ the family of existential formulae with parameters and by $\mathscr{F}$ the family of free profinite groups of finite rank. Then $\mathbb Q\cap [0,1]\subseteq \mathrm{Spec}_\mu(\mathcal{EP}, \mathscr{F})$.
\end{lemma}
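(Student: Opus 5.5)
Given a rational $a/b\in[0,1]$ with $0\le a\le b$ and $b\ge 1$, I will find a free profinite group of finite rank $G$ and an existential formula with parameters $\varphi$ such that $\mu(G_\varphi)=a/b$. The idea is to pull back a subset of a finite cyclic quotient. If a formula with parameters defines a full preimage $\pi^{-1}(B)$ under an epimorphism $\pi\colon G\to Q$, then Proposition~\ref{prop: quotient measure of a set} gives $\mu_G(G_\varphi)=|B|/|Q|$. The cases $a=0$ and $a=b$ are trivial, handled by $x\neq x$ and $x=x$. So assume $0<a<b$, and choose $Q=C_b$ with $B$ any subset of size $a$.

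The construction uses $G=\widehat{F}_1\cong\widehat{\Z}$, the free profinite group of rank one. If only non-abelian ranks are wanted, take $\widehat{F}_d$ and compose with the abelianisation followed by a projection. Set $N=b\widehat{\Z}$, an open subgroup of index $b$. Let $g$ be a topological generator, so that $G/N\cong C_b$ is generated by the image of $g$. Pick residues $r_1,\dots,r_a\in\{0,\dots,b-1\}$ that are distinct. The target set is $S=\bigcup_{i=1}^a g^{r_i}N$, which has measure $a/b$ by translation invariance and the lemma $\mu(N)=1/[G:N]$.

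It remains to define $S$ existentially with the parameter $g$. Membership $x\in g^{r}N$ means $x=g^{r}y^{b}$ for some $y$, because $N=G^b$ (the set of $b$-th powers) in $\widehat{\Z}$. Hence
$$\varphi(x):=\exists y:\ \bigvee_{i=1}^a x=g^{r_i}y^{b},$$
which is an existential (indeed positive) formula with parameter $g$. The closedness in Lemma~\ref{lem: sets defined by positive existential formulae are measurable} still holds with parameters, since translation and the power map are continuous.

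For rank $d\ge2$, the step I expect to need care is expressing membership in the preimage $\pi^{-1}(g^{r}N)$. Here $\pi\colon\widehat{F}_d\to C_b$ sends the first generator $g$ to a generator of $C_b$ and the other generators to $1$. The kernel is open, so by finite generation (Nikolov--Segal) it is a verbal-type subgroup of bounded width. A more concrete route is to note that $\ker\pi$ is generated as a closed subgroup by $b$-th powers, commutators and the other generators $g_2,\dots,g_d$. One then writes membership as $x=g^{r}\,y^{b}\,c$, where $c$ ranges over a product of boundedly many commutators and conjugates of the parameters $g_2,\dots,g_d$. Boundedness follows because the kernel is a finitely generated profinite group, namely an open subgroup of $\widehat{F}_d$. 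In that case every element of $\ker\pi$ is a product of a bounded number of conjugates of the generators $g^b, g_2,\dots,g_d$ and commutators, using finite width of commutators in finitely generated profinite groups. If this becomes delicate, the rank-one construction already proves the lemma as stated, since $\widehat{F}_1$ is a free profinite group of finite rank.
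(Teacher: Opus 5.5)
Your rank-one argument is correct and is precisely the paper's construction in the case $r=1$. The paper takes, in $\widehat{F}_r$ of arbitrary finite rank, the union of $t=ab^{r-1}$ cosets of $K_b=[\widehat{F}_r,\widehat{F}_r]\widehat{F}_r^{\,b}$, which it defines using finite width of the commutator word; for $r=1$ this is exactly your union of $a$ cosets of the set of $b$-th powers in $\widehat{\mathbb{Z}}$, where no width result is needed.

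Your sketch for $d\ge 2$ is not needed, and its boundedness claim is false as stated. Modulo $[\widehat{F}_d,\widehat{F}_d]$, products of boundedly many conjugates of $g_1^{b},g_2,\dots,g_d$ and commutators only reach integer vectors of bounded size, whereas $\ker\pi$ maps onto $b\widehat{\mathbb{Z}}\times\widehat{\mathbb{Z}}^{d-1}$. The correct route is the paper's: write $\pi^{-1}(B)$ as a union of cosets of $K_b=\{y^{b}c : c\in[\widehat{F}_d,\widehat{F}_d]\}$, where $c$ is a product of boundedly many commutators.
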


\begin{proof}
Let $m$ and $r$ be positive integers, let $\widehat{F}_r$ be the free profinite group of rank $r$ and consider the short exact sequence 
$$1\to K_m\to \widehat{F}_r \to {\mathbb Z}^r/m{\mathbb Z}^r \to 1.$$
The group $K_m$, whose measure is $m^{-r}$, is given by $[\widehat{F}_r,\widehat{F}_r]\widehat{F}_r^m$. Since the commutator word has finite width in the finitely generated profinite group $\widehat{F}_r$, the group $K_m$ is definable in $\widehat{F}_r$.

Choose $t\le m^r$ distinct cosets $c_1 K_m,\ldots,c_t K_m$. Then there is an existential formula with parameters $c_1,\ldots, c_t$
defining the union of these cosets, whose measure is $\frac{t}{m^r}$. Given any rational number $\frac{a}{b}\in[0,1]$, take $m=b$ and $t=ab^{r-1}$. Then
\[
\frac{t}{m^r} = \frac{ab^{r-1}}{b^r} =\frac{a}{b}.
\]
\end{proof}

\section{A family of finite \texorpdfstring{$p$}{p}-groups where \texorpdfstring{$x^p$}{xp} has positive measure converging to zero}
\label{sec: family of p-gps where x^p has positive measure}

It is easy to see that the measure of the image of the word $x^p$ is positive in any cyclic group $C_{p^k}$ with $k>1$ (compare with Section \ref{sec: measures in fg abelian groups}). However, this measure is constantly equal to $\frac{1}{p}$. In this section, we find a family of finite $p$-groups where the measure of the image of the $p$-power map is positive but converging to zero, thus yielding an alternative proof of Theorem \ref{prop:A} in this special case.

For this purpose, we might think of the family $C_p \wr C_{p^k}$ ($k\ge 1$) as a natural candidate, as the intuition would indicate that the $p$-powers `die out' when $k$ tends to infinity. Surprisingly, this is not the case (as we show in Remark~\ref{rem:measure_x_p}). However, a slight modification of the groups above does the trick.

\subsection{Wreath products}
We recall here the necessary technical details about the wreath products that we will consider. 

Given a finite $p$-group $G$, we can form the wreath product $C_p \wr G$ with respect to the regular action of $G$ on itself by left-multiplication. In this special case, this wreath product can be identified with $\mathbb{F}_p[G] \rtimes G$, where $\mathbb{F}_p[G]$ is the $\mathbb{F}_p$\nobreakdash-group algebra of $G$ on which $G$ acts by left-multiplication; that is, for $u\in \mathbb{F}_p[G]$ and $g\in G$ we define $u^{g^{-1}}=g\cdot u$. For $u,v\in \mathbb{F}_p[G]$ and $g_1,g_2\in G$ the multiplication in $C_p \wr G$ works as follows

\begin{equation}
\label{eq: formula product in Cp wreath G}
u g_1 \cdot v g_2 = u (g_1 v g_1^{-1}) g_1 g_2= [u+ g_1 \cdot v ]g_1 g_2.
\end{equation}

Since we will need this later, we observe that it readily follows from (\ref{eq: formula product in Cp wreath G}) that the exponent of $C_p \wr G$ is at most $p \exp(G)$. Moreover, it is easy to see that, if $G$ is $d$-generated, then $C_p\wr G$ can be generated by at most $d+1$-elements.

We will now describe the behaviour of the $p$-power map in $C_p\wr G$. Let $x= u g\in C_p \wr G = \mathbb{F}_p[G] \rtimes G$, then
\begin{equation}\label{eq:p_power}
  x^p = [u + g \cdot u + \ldots + g^{p-1} \cdot u]g^p =: T_g(u)g^p, 
\end{equation}
where $T_g:\mathbb{F}_p[G] \to \mathbb{F}_p[G]$ denotes the linear map $u\mapsto (1+g+\ldots+g^{p-1})\cdot u$. 

\begin{lemma}\label{lem:binom}
 Let $G$ be a finite $p$-group. For every $g\in G$ the linear map $T_g$ satisfies $T_g(u) = (g-1)^{p-1} \cdot u$ for all $u\in \mathbb{F}_p[G]$.
 
 \begin{proof}
 Using Newton's binomial formula, 
 \[
   (g-1)^{p-1} = \sum_{i=0}^{p-1} \binom{p-1}{i} g^i (-1)^{p-1-i}.
 \] 
 Since the coefficients $\binom{p-1}{i}$ are congruent to $(-1)^i\mod p$, the coefficient of $g^i$ in the previous sum is $(-1)^i \cdot(-1)^{p-1-i} = (-1)^{p-1}=1$ for $p>2$. For $p=2$, we get that $(-1)^{2-1} = -1 \equiv 1 \mod 2$. In any case, the claimed equality follows.
 \end{proof}
\end{lemma}

\subsection{The family}
Here we investigate a family of finite $p$-groups with few $p$-powers.

Let $G_k := C_p \wr W_k$, $W_k := C_p\wr C_{p^k}$, both wreath products in regular action. So $G_k = \mathbb{F}_p[W_k] \rtimes W_k$ and $W_k = \mathbb{F}_p[C_{p^k}]\rtimes C_{p^k}$. Note that $G_k$ is generated by the elements $1_{\mathbb{F}_p[W_k]}, 1_{\mathbb{F}_p[C_{p^k}]}$ and the generator $g$ of $C_{p^k}$. The orders of the groups are
\[
  \lvert W_k \rvert = p^{k+p^k}=: N_k \text{\quad and \quad} \lvert G_k \rvert = N_k p^{N_k}.
\]

Recalling that, for every $g\in W_k$, the map $T_g:\mathbb{F}_p[W_k] \to \mathbb{F}_p[W_k]$ is the linear map $u\mapsto (1+g+\ldots+g^{p-1})\cdot u$ and that, by \eqref{eq:p_power}, $(ug)^p=T_g(u)g^p$, the size of the image of the $p$-power map $w_p:G_k\rightarrow G_k$ is bounded from above as follows: 
\begin{equation}\label{eq:image_bound}
   \lvert \mathrm{im}(w_p)\rvert < \sum_{h\in W_k} \lvert \mathrm{im}(T_h) \rvert .
\end{equation}

Note that the inequality \eqref{eq:image_bound} is strict, because there are distinct elements $h_1,h_2\in W_k$ such that $h_1^p=h_2^p$ and a sharper bound could certainly be obtained using the inclusion-exclusion formula. However, the above bound is sufficient for our purposes. We will now bound $ \lvert \mathrm{im}(T_h) \rvert$ from above.

If $h=1$, it is clear that $\lvert \mathrm{im}(T_1)\rvert=1$.

If $h\neq 1$, let us write $o(h)$ for the order of $h\in W_k$. Then, the group-algebra $\mathbb{F}_p[W_k]$ splits into $N_k/o(h)$ copies of the regular representation $\mathbb{F}_p[\langle h\rangle]$ of $\langle h\rangle$.

Let us focus on $A:=\mathbb{F}_p[\langle h\rangle]$. By Lemma~\ref{lem:binom}, we can reduce ourselves to study the linear map $f:A\to A$ given by $x\mapsto (h-1)\cdot x$. It can be easily seen by direct computation with the basis $\{1,h,\ldots,h^{o(h)-1}\}$ that $\dim\ker f^\ell=\ell$ for any $0\le \ell\le o(h)$. It follows that $\dim\ker T_h =p-1$.

Thus, on $\mathbb{F}_p[W_k]$ the dimension of the kernel of $T_h$ is $\frac{N_k}{o(h)} (p-1)$ and its image has dimension $$N_k-\frac{N_k}{o(h)} (p-1) = N_k \left(1 - \frac{p-1}{o(h)}\right).$$

Now, the exponent of $W_k$ is exactly $p^{k+1}$, hence the image of $T_h$ has dimension at most $N_k \left(1 - \frac{p-1}{p^{k+1}}\right)$ for all $h\in W_k$ and for any $1\neq h\in W_k$
\[
  \lvert \mathrm{im}(T_h) \rvert \le p^{N_k \left(1 - \frac{p-1}{p^{k+1}}\right)}
\]
and the image of the $p$-power map is of size at most $N_k \cdot p^{N_k \left(1 - \frac{p-1}{p^{k+1}}\right)}$. Then the probability that an element of $G_k$ is a $p$-power is at most
\[
  \frac{N_k \cdot p^{N_k (1 - \frac{p-1}{p^{k+1}})}}{N_k \cdot p^{N_k}} = p^{-N_k \frac{p-1}{p^{k+1}}} = p^{-(p-1)p^{p^k-1}}
\] 
which tends to zero for $k\to \infty$.

\begin{remark}\label{rem:measure_x_p}
    The measure of the image of the $p$-power map $w_p$ on $W_k=C_p\wr C_{p^k}$ is bounded away from zero. In fact, if $C_{p^k}=\langle g\rangle$, for all distinct elements $x\neq x'$ in the set 
    $$X= \{g^i\mid 0 \le i\le p^{k-1}-1\},$$ one has $x^p\neq (x')^p$. Moreover, $\lvert X\rvert={p^{k-1}}$. Thus
    \[
      \lvert\mathrm{im}(w_p)\rvert \ge \sum_{x\in X} \lvert \mathrm{im}(T_x)\rvert.
    \]
    Observe that the order $o(x)$ for $1\neq x\in X$ is at least $p$, so that in the above estimates (rewritten for $C_p\wr C_{p^k}$) the image of $T_x$ has size \emph{at least} $p^{p^k-1+1/p}$. Thus,
    \[
       \lvert\mathrm{im}(w_p)\rvert \ge (p^{k-1}-1) \cdot p^{p^k-1+1/p}
    \]
    and dividing by $\lvert W_k\rvert = p^{k+p^k}$ we obtain that the measure of the image is bounded away from $0$ for all $k\in \mathbb N$.
\end{remark}

\subsection{The word \texorpdfstring{$x^p[y,z]$}{xp[y,z]}}

Recall from the Introduction that it would be interesting to calculate the measure $\mu(\mathrm{im}(x^p[y,z]))$ in the free pro-$p$ group on $3$ generators. We might hope that an argument similar to that used for the word $x^p$ could lead us to conclude that this measure is zero. This is not the case, and indeed, in this subsection we show that the measure of the image of the word $x_1^p[x_2,x_3]$ is bounded away from zero on $G_k$. 

A direct computation, combining equation \eqref{eq:p_power}, for $x,y,z\in W_k$ and $u,v,t\in B=\mathbb{F}_p[W_k]$ yields:
\begin{align*}
  (ux)^p [vy,tz] &= \left[T_x(u) + x^p(zy)^{-1} \cdot ( (y-1) \cdot t - (z-1)\cdot v) \right] \, x^p[y,z] \\ & :=\omega_{x,y,z}(u,v,t) \, x^p[y,z].
\end{align*}

We first observe that, if $\langle y,z\rangle =W_k$, then, since the augmentation ideal $I(B)$ of $B$ is generated by $y-1$ and $z-1$, we have that 
\[
   \omega_{x,y,z}(B,B,B) \supseteq (y-1)\cdot B + (z-1) \cdot B = I(B)
\]
so that the size of $\mathrm{im}(\omega_{x,y,z})$ is at least $p^{N_k-1}$. 

In a similar fashion to the previous subsection, to bound the image from below we need to pick elements $x,y,z\in W_k$ that give different images in $G_k$ via the word $x^p[y,z]$. First observe that, if $x_1^p x_2^{-p} \notin \mathbb{F}_p[C_{p^k}]$ then $x_1^p[y_1,z_1] \neq x_2^p[y_2,z_2]$ (project modulo the base group of $W_k$). Therefore the set $X=\{g^i\mid 1\le i \le p^{k-1}-1\}$ is of size $p^{k-1}$ and it is such that for any $x_1\neq x_2\in X$ and $y_1,y_2,z_1,z_2 \in W_k$ we have that $x_1^p[y_1,z_1] \neq x_2^p[y_2,z_2]$.

On the other hand, it is clear that the element $a=1_{\mathbb{F}_p[C_{p^k}]}$ together with the generator $g$ of $C_{p^k}$ generate $W_k$. Now, for any element $v\in I(\mathbb{F}_p[C_{p^k}])$, since the augmentation ideal is contained in the Frattini, the elements $va,vg$ still generate $W_k$ and
\[
   [va,vg] = (g^{-1}-1)\cdot (v+a).
\]
We now observe that the multiplication by $g^{-1}-1$ induces a linear map on $\mathbb F_p[C_{p^k}]$ whose kernel is $Z(W_k)$ and $\dim_{\mathbb{F}_p} Z(W_k)=1$. Hence, if we define the set $$\overline{Y}=\{ (va,vg) \mid v\in I(\mathbb{F}_p[C_{p^k}])\}/Z(W_k),$$ then $\lvert \overline{Y} \rvert=p^{p^k-2}$. Finally, choose a set of representatives $Y$ for $\overline{Y}$ in $\mathbb{F}_p[C_{p^k}]$. 

By construction, it follows that the images of the cosets $(Bx,(By,Bz))$ for $x\in X$ and $(y,z)\in Y$ via the word $x^p[y,z]$ are all pairwise disjoint, because they have different images modulo $B$.

Putting all our estimates together
\begin{align*}
   \lvert \mathrm{im}(a^p[b,c])\rvert & \ge \sum_{x\in X,\ (y,z)\in Y} \lvert \mathrm{im}(\omega_{x,y,z})\rvert  \ge p^{k-1} \cdot p^{p^k-2} \cdot p^{N_k-1}  \\
  & = \frac{N_k p^{N_k}}{p^4} = \frac{\lvert G_k\rvert}{p^4} 
\end{align*}
Thus, $\lvert \mathrm{im}(x_1^p[x_2,x_3])\rvert/\lvert G_k \rvert \ge 1/p^4>0$ for all $k\in \mathbb N$.

\end{document}